\numberwithin{equation}{section}
\providecommand{\U}[1]{\protect\rule{.1in}{.1in}}
\newtheorem{theorem}{Theorem}[section]
\newtheorem{corollary}[theorem]{Corollary}
\newtheorem{definition}[theorem]{Definition}
\newtheorem{example}[theorem]{Example}
\newtheorem{lemma}[theorem]{Lemma}
\newtheorem{proposition}[theorem]{Proposition}
\newtheorem{remark}[theorem]{Remark}
\newtheorem{assumptions}[theorem]{Assumption}
\DeclareMathOperator*{\argmin}{arg\,min}
\DeclareMathOperator*{\minn}{min}
\newcommand{\divv}{\mathrm{div}}
\newcommand{\red}{\color{black}}
\newcommand{\T}{\mathcal{T}}
\definecolor{mygreen}{rgb}{0.1,0.75,0.2}
\newcommand{\nc}{\normalcolor}
\newcommand{\EE}{\mathcal{E}}
\newcommand{\ee}{\vartheta}
\newcommand{\R}{\mathbb{R}}
\newcommand{\G}{\mathcal{G}}
\renewcommand{\P}{\mathcal{P}} 
\newcommand{\veps}{\varepsilon}
\newcommand{\M}{\mathcal{M}}
\title{ Semi-discrete optimization through semi-discrete optimal transport: a framework for neural architecture search }
\author{Nicol\'as Garc\'ia Trillos and Javier Morales}
\address{Nicol\'as Garc\'ia Trillos, Department of Statistics, University of Wisconsin-Madison. 1300 University Avenue, Madison, WI, USA 53706  \\}
\email{garciatrillo@wisc.edu}
\address{Javier Morales, Center for Scientific Computation and Mathematical Modeling (CSCAMM), University of Maryland, College Park MD 20742  \\}
\email{javierm1@cscamm.umd.edu}
\keywords{neural architecture search, semi-discrete optimization, optimal transport, gradient flows}
\begin{document}

\thanks{{\bf Acknowledgements:} N. Garc\'ia Trillos was supported by NSF-DMS 2005797. The work of J. Morales was supported by NSF grants DMS16-13911, RNMS11-07444 (KI-Net) and ONR grant N00014-1812465. Support for this research was provided by the Office of the Vice Chancellor for Research and Graduate Education at the University of Wisconsin-Madison with funding from the Wisconsin Alumni Research Foundation.}

\maketitle

	\begin{abstract}
		In this paper we introduce a theoretical framework for semi-discrete optimization using ideas from optimal transport. Our primary motivation is in the field of deep learning, and specifically in the task of neural architecture search. With this aim in mind, we discuss the geometric and theoretical motivation for new techniques for neural architecture search (in the companion work \cite{practical} we show that algorithms inspired by our framework are competitive with contemporaneous methods). We introduce a Riemannian-like metric on the space of probability measures over a semi-discrete space $\R^d \times \G$ where $\G$ is a finite weighted graph. With such Riemmanian structure in hand, we derive formal expressions for the gradient flow of a relative entropy functional, as well as second order dynamics for the optimization of said energy. Then, with the aim of providing a rigorous motivation for the gradient flow equations derived formally, we also consider an iterative procedure known as minimizing movement scheme (i.e., Implicit Euler scheme, or JKO scheme) and apply it to the relative entropy with respect to a suitable cost function. For some specific choices of metric and cost, we rigorously show that the minimizing movement scheme of the relative entropy functional converges to the gradient flow process provided by the formal Riemannian structure. This flow coincides with a system of reaction-diffusion equations on $\R^d$.
		
	\end{abstract}

	\tableofcontents

	\section{Introduction}

	Let $(\mathcal{G},K)$ be a weighted graph over the finite set $\G$ and consider the semi-discrete space $\R^d \times \G$; the function $K:\G \times \G \rightarrow [0,\infty)$ is assumed to be symmetric. In this paper we study, from geometric and variational perspectives, the system of reaction diffusion PDEs:

 \begin{align}\begin{aligned}\label{eqn:GradFlowE}\partial_{t}&f_t(x,g)=  \Delta_{x}f_{t}(x,g)+\text{div}_{x}(f_{t}(x,g)\nabla_{x}V(x,g))\\
+ & \sum_{g^{\prime}\in\mathcal{G}}\big[\log f_{t}(x,g)+V(x,g)-(\log f_t(x,g^{\prime})+V(x,g^{\prime}))\big]K(g,g^{\prime})\theta_{x,g,g'}(f_{t}(x,g),f_{t}(x,g')),
\end{aligned}
\end{align}
for $g \in \G$. In the above, $V:\R^d \times \G \rightarrow \R$ is a potential function defined on the semi-discrete space $\R^d \times \G$. The function $f_t$, i.e. the solution to the system of PDEs, is a function from $\R^d \times \G$ into $\R$ (alternatively, $f_t$ can be thought of as a collection of real valued functions on $\R^d$ indexed by $\G$), and can be interpreted as the density of a probability distribution on $\R^d \times \G $. Finally, the \textit{mobility} function $\theta_{x,g,g'}: [0,\infty) \times [0,\infty) \rightarrow [0,\infty)$ serves as ``interpolator" for the masses at the points $(x,g)$ and $(x,g')$ and in general dictates the rate at which mass can be exchanged between nodes in $\G$. 

In the first part of the paper, we provide a geometric interpretation of system \eqref{eqn:GradFlowE} by casting it as a formal gradient flow of a relative entropy functional defined on the space $\mathcal{P}(\R^d \times \G)$ of probability measures on $\R^d \times \G$ with respect to an appropriate semi-discrete optimal transport metric, this optimal transport metric is reminiscent to the Wasserstein metric in Euclidean space in its dynamic form. While the geometric interpretation that we study here is largely formal, the framework that we introduce is quite rich and allows us to give formal definitions of geodesic equations and second order dynamics in the space $\mathcal{P}(\R^d \times \G)$. 

The second perspective that we take has a variational flavor. We introduce a static optimal transport problem that serves as cost function in a minimizing movement scheme (a.k.a. JKO scheme) for the relative entropy functional $\EE$. Then, we rigorously show that for a mobility that is independent of the masses to be interpolated (i.e. $\theta_{x,g,g'}$ does not depend on $f_t(x,g)$ and $f_t(x,g')$), system \eqref{eqn:GradFlowE} can be recovered as the limit of the minimizing movement scheme as the time discretization converges to zero; see Theorem \ref{main-result} for a precise statement.

Regardless of the perspective taken, the main conceptual insight stemming from our work is that the system of equations \eqref{eqn:GradFlowE} can be interpreted as a gradient flow of relative entropy in the space of probability measures $\mathcal{P}(\R^d \times \G)$. What interests us from this interpretation is that it allows us to motivate new schemes for the optimization of an objective function of the form $V : \R^d\times \G \rightarrow \R$, with applications in machine learning such as \textit{neural architecture search} in mind (see the discussion in section \ref{sec:OpenProblems}). The discussion in the next section in the familiar Euclidean setting will help us motivate the prospects of using semi-discrete optimal transport for semi-discrete optimization; we also motivate the theoretical results that we seek in this paper by providing a brief historical background on gradient flows in the space of probability measures. Our companion paper \cite{practical} discusses more concretely how part of the theoretical framework presented in this work can be used to define scalable neural architecture search algorithms.

	\nc

	\nc

	\red

	\nc

	\subsection{Motivation from Euclidean space: Otto Calculus in $\mathcal{P}(\R^d)$} 
	\label{sec:OTEuclidean}

	Consider an optimization problem on $\R^d$ of the form
	\[\min_{x \in \R^d} V(x), \]
	where for the sake of exposition $V$ is assumed to be a nice enough differentiable function. Let us consider the following dynamics on the state space $\R^d$:
	\begin{equation}\label{rd_grad_flow}  \begin{cases} dx(t)= - \nabla_x V(x(t))dt &, \quad t >0
	\\ x(0)= x_0,
	\end{cases}\end{equation}
	\begin{equation}\label{rd_grad_flow_noise} \begin{cases} dx(t) = - \nabla_x V(x(t))dt + \frac{\sqrt{\eta}}{2}dB_t,
	&, \quad t >0
	\\ x(0)= x_0,
	\end{cases}
	\end{equation}
	\begin{equation}\label{rd_grad_flow_noiseKalman}
	\begin{cases}
	dx^j(t) = - C_t \nabla_x V( x^j(t))dt + \sqrt{2 C_t}dB_t^j, \quad t > 0 \quad j=1, \dots, J \\
	C_t:=  \frac{1}{J} \sum_{j=1}^J (x^j(t) - \overline{x}(t) ) \otimes (x^j(t) - \overline{x}(t) ).
	\end{cases}
	\end{equation}
	All of the above dynamics can be interpreted as gradient-based continuous time algorithms for the optimization of the function $V$. \eqref{rd_grad_flow} is gradient descent. \eqref{rd_grad_flow_noise} is gradient descent with Brownian noise; in principle useful to help gradient descent scape local minima. \eqref{rd_grad_flow_noiseKalman} is a preconditioned gradient descent with noise. In \eqref{rd_grad_flow_noiseKalman} multiple interacting particles are used to define the preconditioning matrix $C_t$ (in this case the running covariance matrix associated to the particles). Besides being used for the optimization of the objective $V$ defined on $\R^d$, equations \eqref{rd_grad_flow}, \eqref{rd_grad_flow_noise}, and \eqref{rd_grad_flow_noiseKalman} share a common underlying structure: they can be associated to certain gradient flows in the space of probability measures $\mathcal{P}(\R^d)$ when endowed with an appropriate optimal transport cost. In what follows we revisit this connection for \eqref{rd_grad_flow_noise} (notice that while degenerate, \eqref{rd_grad_flow} can be seen as a special case of \eqref{rd_grad_flow_noise}) and refer the interested reader to \cite{KalmanWasserstein} for details on how to interpret \eqref{rd_grad_flow_noiseKalman}. 
	
	It is well known that the law of the process $x(t)$ in \eqref{rd_grad_flow_noise} denoted $\mu_t$ solves a Fokker Planck equation of the form:
	\begin{equation}
	\dot{\mu}_t- \divv_x(\mu_t \nabla_x V )-\eta \Delta_x(\mu_t) =0 , \quad t >0,
	\label{eqn:FokkerPlanckRd}
	\end{equation}
	with initial datum $\mu_0$, where in the above $\divv_x$ is the divergence operator in $\R^d$, $\nabla_x$ the gradient operator, and $\Delta_x$ the \textit{Laplacian} operator $\Delta_x:=\divv_x\circ \nabla_x$. In general, equation \eqref{eqn:FokkerPlanckRd} must be interpreted in weak form.
	
	Mathematicians and physicists have studied Fokker Planck equations for decades, and more recently, the seminal work of \cite{J-K-O} has provided a gradient flow interpretation for these equations. This interpretation uses the setting of gradient flows in the space of probability measures endowed with the Wasserstein distance. To be more precise let us first recall the definition of the Wasserstein distance with quadratic cost for a pair of probability measures $\mu, \nu \in \mathcal{P}_2(\R^d)$ (i.e. probability measures with finite second moments): 
	\begin{equation}\label{def:Wass}
	W_{2}(\mu,\nu)^2 := \min_{\pi \in \Gamma(\mu, \nu)}\int_{\R^d \times \R^d} |x-y|^2 d\pi(x,y), 
	\end{equation}
	where $\Gamma(\mu, \nu)$ is the set of couplings between $\mu$ and $\nu$. The above definition can be thought of as describing a \textit{static} optimal transport problem, where one seeks for an optimal assignment of sources and targets of mass without specifying how said transport is actually realized dynamically in time. An alternative \textit{dynamic} reformulation due to Benamou and Brenier \cite{B-B} states that
	\[ W_{2}(\mu,\nu)^{2}=\inf _{t \in [0,1] \mapsto (\mu_t, \nabla_x \varphi_t) }\int_{0}^{1}\int_{\R^{d}}|\nabla_x \varphi_t|^{2}\hspace{1mm}d\mu_{t}dt, \]
	where the minimum is taken over all solutions $(\mu_t, \nabla_x \varphi_t)$ to the continuity equation
	\begin{equation}\label{Conteqn}
	\dot{\mu}_t + \text{div}( \mu_t \nabla_x \varphi_t) =0,
	\end{equation}
	with $\mu_0=\mu$ and $\mu_1=\nu$. The Benamou-Brenier reformulation highlights the otherwise unclear dynamic nature of the optimal transport problem \eqref{def:Wass} and it reveals a deeper geometric structure that we now discuss. First, solutions to the continuity equation $t \in [0,1] \mapsto (\mu_t, \nabla_x \varphi_t)$ which represent the different ways in which one can dynamically transport mass from $\mu_0$ to $\mu_1$ can be mathematically interpreted as curves in the space of probability measures. Here, $\mu_t$ specifies the location of a particle at time $t$ while the potential $\varphi_t: \R^d \rightarrow \R$ is interpreted as ``tangent vector" characterizing an allowed infinitesimal change to the location $\mu_t$. Second, the objective function in the Benamou-Brenier problem can be interpreted as the ``length" of a given curve (in this case a kinetic energy). A formal Riemannian metric tensor $\langle \cdot  ,\cdot \rangle_{\mu}$ can be defined according to: 
	\[\langle \varphi,\varphi^{\prime}\rangle_{\mu}:=\int_{\R^{d}}\nabla_x\varphi\cdot\nabla_x\varphi^{\prime} d\mu\] for any two potentials $\varphi, \varphi^{\prime}: \R^d \rightarrow \R$ (i.e. any two tangent vectors at $\mu$). From the above discussion one can now see that the Wasserstein distance corresponds to the \textit{geodesic distance} associated to the above formal metric tensor, and reveals that the metric space $(\mathcal{P}_2(\R^d), W_2)$ can be treated (at least formally) as a Riemannian manifold.

	%
	%

	Now, seeing $(\mathcal{P}_2(\R^d), W_2)$ as a formal Riemannian manifold allows one to give a heuristic definition for the gradient flow of a functional $\mathcal{E}$ defined on $\mathcal{P}_2(\R^d)$:   
	\begin{equation} \label{eqn:GradFlowRd}
	\begin{cases} \dot \mu(t) = - \nabla_{W_2} \mathcal{E}(\mu(t))\\ \mu(0)= \mu_0. \end{cases}  \end{equation}
	With the Fokker Planck equation in \eqref{eqn:FokkerPlanckRd} in mind let us consider the functional
	\[  \mathcal{E}(\mu)=\int_{\R^{d}}V\hspace{1mm}d\mu+\eta H(\mu),    \]
	where $H$ is the negative Shannon entropy
	\[  H(\mu)=\begin{cases}
	\int_{\R^{d}} f \log f dx & \text{ if } d\mu=f(x)dx,\\
	+\infty & \text{ othwerwise }.
	\end{cases}  \]
	In the Riemannian formalism $\nabla_{W_2}\EE(\mu)$ must be interpreted as a tangent vector to $\mu$ (i.e. a potential)  which serves as Riesz representer to the map of directional derivatives of the energy $\EE$. Namely, for an arbitrary curve $t\mapsto \mu_t \in \mathcal{P}_2(\R^d)$ which at time $t=0$ passes through $\mu$ with tangent vector $\varphi$ one must have
	\[ \frac{d}{dt}\EE(\mu_t)|_{t=0}= \langle \nabla_{W_2} \EE(\mu) ,  \varphi \rangle_{\mu}.\]
	The set of heuristic computations used to determine the gradient $\nabla_{W_2} E(\mu)$ from the above formula is nowadays widely known as Otto Calculus (see chapter 15 in \cite{Villani2009}), and in the case of the relative entropy it gives the formula:
	\[  -\nabla_{W_2}\EE(\mu)= -   V - \eta \log f  ,    \]
	for every $d\mu=f(x) dx;$ a similar computation will be presented in more detail in section \ref{sec:FormalGradient} for the semi-discrete setting explored here. Plugging the above potential back in the continuity equation, we recover the Fokker Planck equation \eqref{eqn:FokkerPlanckRd}. In other words, through heuristic arguments from Riemannian geometry that rely on the geometric structure of the optimal transport distance $W_2$, the dynamics \eqref{rd_grad_flow_noise} used for optimization of $V$ can be lifted to the space $\mathcal{P}_2(\R^d)$ where one can give a gradient flow interpretation.

	There is a second way of motivating an interpretation of \eqref{eqn:GradFlowRd} which coincides with the one coming from the Riemannian formalism. To discuss this alternative let us first consider a more general setting and let us assume that $\M$ is an arbitrary topological space, ${E}: \M \rightarrow (-\infty, \infty]$ is an objective function to optimize, $C: \M \times \M \rightarrow [0,\infty)$ is a driving cost function, and $\tau>0$ is a time step. One can then consider the \textit{minimizing movement scheme} (also known as JKO scheme)
	\begin{equation}
	\label{eqn:JKO}
	\mu_{k+1}   \in  \nc \argmin_{\mu \in \M}  {E}(\mu) + \frac{1}{2\tau } C(\mu_k,\mu)^2,
	\end{equation}
	as a discrete time scheme for optimization. 
	Under suitable conditions, in the limit $\tau\rightarrow0$ iterates \eqref{eqn:JKO} define a function in time describing what one can refer to as a ``gradient flow of ${E}$" with respect to the cost function $C$. Notice that when $\M=\R^d$ and $C$ is the Euclidean metric, the above scheme is essentially the variational formulation of implicit Euler iterates (i.e., the computation of a proximal operator for the function $E$). 
	
	When $\M = \mathcal{P}_2(\R^d),$ $C$ is the Wasserstein distance $W_2$, and $E=\EE$ is the relative entropy, the iterates $\mu_0, \mu_{1}, \dots, \mu_{k},\dots$ (where $\mu_0$ is assumed to satisfy $\EE(\mu_0)<
	\infty$) defined recursively by the JKO scheme, i.e.
	\begin{equation}
	\mu_{k+1}  \in \nc \argmin_{\mu \in \mathcal{P}_2(\R^d)} \mathcal{E}(\mu) + \frac{1}{2\tau} W_2^2( \mu_k,\mu),
	\label{def:JKO2} 
	\end{equation}
	can be shown to converge as $\tau \rightarrow 0,$ to a solution of the Fokker Planck equation \eqref{eqn:FokkerPlanckRd} (see \cite{J-K-O}). Historically, the JKO scheme \eqref{def:JKO2} was the first approach used to give a ``gradient flow" interpretation to the Fokker Planck equation \eqref{eqn:FokkerPlanckRd}. In more generality, evolution equations of the form
	\[\dot{\mu}_t={\rm {div}_x}\bigg(\nabla_x \mu_t+\mu_t\nabla_x V+\mu_t(\nabla_x U\ast\mu_t\big)\bigg),\ 
	\]
	are limits of the JKO scheme \eqref{eqn:JKO} for appropriate functionals defined on $\mathcal{P}_{2}(\mathbb{R}^{d})$ using the Wasserstein distance as cost function. The gradient flow interpretation via the minimizing movement scheme allows one to prove entropy estimates and functional inequalities (see \cite{Villani2009} for more details on this area, which is still very active and in constant evolution). \\

The minimization problem \label{eqn:JKO} can be stated in a Lagrangian form as the problem of finding

	\begin{equation}
	\mu_{k+1}  \in  \argmin_{\mu \in \mathcal{P}_2(\R^d)} \mathcal{E}(\mu) +  \mathcal{A}^{\tau}( \mu_k,\mu),
	\label{def:JKO2:lagrangian} 
	\end{equation}
where $\mathcal{A}^{\tau}( \mu_k,\mu)$ denotes the action of the curve in the tangent bundle of $(\mathcal{P}_2(\R^d), W_2)$ with minimal kinetic energy connecting $\mu_k$ and $\mu$ in $\tau$ units of time.\\

\color{black}

In summary, the gradient based dynamics \eqref{rd_grad_flow_noise} used for optimization of an objective $V$ defined on the state space $\R^d$ are closely linked to a gradient flow on the space of probability measures $\mathcal{P}(\R^d)$. This gradient flow can be motivated using either the formal Riemannian structure that the dynamic formulation of optimal transport has, or the minimizing movement scheme with driving cost taken to be the Wasserstein distance (given that the two interpretations coincide).

	\nc

\subsection{Outline}  We organize the rest of the paper as follows. In section \ref{sec:Semi-discreteOT} we introduce the main objects studied in the paper and state our main results precisely. We start in section \ref{subsec:DiffOperatorsGraphs} introducing the basic analytical objects on graphs used throughout the paper. In section \ref{sec:Riemman} we introduce a family of distances on the space of probability measures over $\R^d \times \G$ based on a dynamic formulation of optimal transport. We highlight the formal Riemannian structure of the metric introduced and explore the connections between our definition and the literature on discrete optimal transport. In section \ref{sec:FormalGradient} we use the Riemannian formalism from section \ref{sec:Riemman} in order to motivate a definition for the gradient flow of a relative entropy energy closely related to the objective function in the semi-discrete optimization problem of interest. In section \ref{sec:Hamiltonian} we use the Riemannian formalism once again and motivate a method for optimization of the relative entropy. In section \ref{sec:main-results} we provide concrete theoretical support for the formal definitions and computations presented in the earlier sections. In particular, we state our main theoretical result, which establishes a connection between the formal definitions from section \ref{sec:FormalGradient} and the minimizing movement scheme discussed in the introduction. To realize the JKO scheme we introduce a new cost that can be interpreted as a \textit{static} semi-discrete optimal transport cost.

Section \ref{ap:heuristic} explores metric and geometric properties of the transport distances introduced in section \ref{sec:Riemman} (i.e. the dynamic semi-discrete transport problems). More specifically, in section \ref{ap:TheoremDistance} we prove that these ``distances" are indeed metrics. Section \ref{sec:tangentplanes} aims at providing concrete and rigorous support for the heuristic discussion in section \ref{sec:Riemman}. The discussion in this section motivates more concretely (and rigorously) the characterization of tangent planes of the space of probability measures over $\R^d \times \G$. Section \ref{sec:geodesic} presents some heuristic computations justifying the definition of the accelerated method for optimization presented in section \ref{sec:Hamiltonian}.

Section \ref{sec:optimal_maps} studies the static semi-discrete transport problem introduced in section \ref{sec:main-results}. This section is used later on in the paper, but is also of independent interest. We establish a characterization for solutions to the static semi-discrete optimal transportation problem that is analogous to the celebrated result by Brenier characterizing solutions to the quadratic (Euclidean) optimal transport problem.   

Section \ref{sec:Prelim} studies properties of the variational problem used to define the JKO scheme relative to the static semi-discrete cost. We provide a full characterization of solutions to this variational problem. We also establish a maximum principle that is characteristic of Fokker Plank equations.

In section \ref{JKO_proof_sec} we put together the results proved in sections \ref{sec:optimal_maps} and \ref{sec:Prelim} and prove our main theoretical result Theorem \ref{main-result}, i.e. we show the convergence of the JKO scheme proposed in section \ref{sec:main-results}.

We wrap up the paper in section \ref{sec:OpenProblems} where we provide some conclusions, perspective on future research directions, and discussion on some of the applications in machine learning that have motivated this work.

\textbf{Note:} Throughout the paper some computations will be carried out at a formal level. One of our aims is to stress the importance of the intuition emanating from the formal Riemannian structure that the dynamic formulation of optimal transport has. After all, it is this Riemannian formalism that motivates the algorithms that are implemented in our companion paper \cite{practical} for the purposes of neural architecture search (including accelerated methods). The formal computations (or heuristic arguments) that we present here are, for the most part, accompanied by rigorous counterparts.

\section{Semi-discrete optimal transport and gradient flows}
\label{sec:Semi-discreteOT}

\subsection{Some differential operators on graphs}
\label{subsec:DiffOperatorsGraphs}

In this section we introduce the discrete differential operators that will later be used to introduce a semi-discrete optimal transport problem on $\R^d \times \G$.

Throughout the paper we assume that $(\G , K)$ is connected, meaning that for every $g,g' \in \G$ there exists a path $g_0, \dots, g_m \in \G$ with $g_0=g$, $g_m = g'$ and $K(g_l, g_{l+1})>0$ for every $l=0,\dots, m-1$. 
\nc

Given a function $\phi: \G \rightarrow \R$ we define its \textit{discrete gradient} as the function $\nabla_g \phi: \G \times \G \rightarrow \R$ 
\[ \nabla_g \phi(g,g') := \phi(g') - \phi(g). \]
We use the subscript $g$ in $\nabla_g$ to distinguish the discrete gradient from the gradient of a function defined on $\R^d$ (where we use the notation $\nabla _x$). This distinction will become important later on when we consider functions $\phi: \R^d \times \G \rightarrow \R$ for which we can compute its gradient $\nabla_x$ as well as its discrete gradient $\nabla_g$. 


Given a function $h: \G \times \G \rightarrow \R$ (i.e. a discrete vector field) we define its \textit{discrete divergence} as the function $\divv_g h: \G \rightarrow \R$ defined by
\[ \divv _g \hspace{1mm} h (g) := \sum_{g'} (h(g,g')- h(g',g))K(g,g').  \]

Discrete gradients and discrete divergences are related to each other via a discrete integration by parts formula.  Namely, a straightforward computation shows that for every $h: \G \times \G \rightarrow \R$ and $\phi: \G \rightarrow \R$ it holds
\begin{equation}
 \sum_{g} \divv_g(h)(g) \phi(g)  = - \sum_{g,g'}h(g,g')\nabla_g \phi(g,g')K(g,g'). 
 \label{eqn:DiscreteIntegrationParts}
\end{equation}
In particular if $h$ is of the form $h = \nabla_g \psi \cdot S $ (where $\cdot$ is interpreted as a coordinatewise product) for some $S: \G \times \G \rightarrow \R$, then 
\begin{equation}
 \sum_{g} \divv_g( \nabla_g \psi \cdot S)(g) \phi(g)  =- \sum_{g,g'} \nabla_g \phi \cdot \nabla_g \psi S(g,g')K(g,g').
 \label{eqn:DiscreteIntegrationParts2}
\end{equation}

\red  
In the remainder we use the following result establishing existence and uniqueness of solutions to elliptic graph PDEs.
\begin{proposition}
\label{lemm:GraphPDE}
Suppose that the graph $(\G, K)$ is connected. Let $\phi: \mathcal{G} \rightarrow \R$ be such that
\[ \sum_{g}\phi(g) =0,\]
and let $S : \G \times \G \rightarrow [0,\infty)$ be a symmetric function which is strictly positive whenever $K(g,g')>0$. Then, there exists a unique solution $\eta: \G \rightarrow \R$ to the graph PDE
\begin{equation}
\divv_g(\nabla_g \eta \cdot S  )=\phi
\end{equation}
satisfying
\[ \sum_{g} \eta(g)=0. \]
Moreover, 
\[  \sum_{g,g'}|\nabla_g \eta(g,g')|^2S(g,g')K(g,g') \leq \frac{1}{\lambda_S} \sum_{g} |\phi(g)|^2,     \]
where $\lambda_{S}$ represents the first non-zero eigenvalue of the graph Laplacian matrix $L_S$ with entries:
\[ L_{S}(g,g'):= \mathds{1}_{g=g'}\sum_{g''}2S(g,g'')K(g,g'') - 2S(g,g')K(g,g'). \]
\end{proposition}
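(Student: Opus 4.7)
My plan is to recognize the operator $\eta\mapsto -\divv_g(\nabla_g\eta\cdot S)$ as the graph Laplacian $L_S$ appearing in the statement and to exploit that it is symmetric positive semidefinite with kernel equal to the constants, then invert it on the mean-zero subspace. The computation showing $L_S\eta(g)=-\divv_g(\nabla_g\eta\cdot S)(g)$ is a direct unraveling of the definition of $\divv_g$ applied to $h(g,g')=\nabla_g\eta(g,g')S(g,g')$, noting that because $S$ is symmetric in $(g,g')$ while $\nabla_g\eta$ is antisymmetric one has $h(g',g)=-h(g,g')$ and so the two terms in the divergence combine to yield $2\sum_{g'}(\eta(g')-\eta(g))S(g,g')K(g,g')$, which matches $-L_S\eta(g)$.

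Next I would apply the integration-by-parts identity \eqref{eqn:DiscreteIntegrationParts2} with $\psi=\phi=\eta$ to obtain the Dirichlet-type identity
\[
\sum_g \eta(g)\,L_S\eta(g)\;=\;\sum_{g,g'}|\nabla_g\eta(g,g')|^2 S(g,g')K(g,g'),
\]
from which symmetry and positive semidefiniteness of $L_S$ are immediate. Since $S(g,g')>0$ whenever $K(g,g')>0$ and $(\G,K)$ is connected, the quadratic form vanishes precisely on constant functions, so $\ker L_S=\mathrm{span}\{\mathbf{1}\}$. Restricted to the hyperplane $V_0:=\{\eta:\sum_g\eta(g)=0\}$, which is exactly the orthogonal complement of the kernel in the standard inner product, the map $L_S\colon V_0\to V_0$ is therefore a bijection. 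Because $\phi\in V_0$ by hypothesis, the equation $\divv_g(\nabla_g\eta\cdot S)=\phi$ (i.e., $L_S\eta=-\phi$) admits a unique solution $\eta\in V_0$; this gives existence and uniqueness.

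For the quantitative bound I would combine the same identity with the Poincar\'e inequality associated to the first nonzero eigenvalue of $L_S$. Testing $L_S\eta=-\phi$ against $\eta$ and using the Dirichlet identity gives
\[
\sum_{g,g'}|\nabla_g\eta(g,g')|^2 S(g,g')K(g,g')\;=\;-\sum_g \phi(g)\eta(g),
\]
which by Cauchy--Schwarz is bounded by $\bigl(\sum_g|\phi|^2\bigr)^{1/2}\bigl(\sum_g|\eta|^2\bigr)^{1/2}$. The variational characterization of $\lambda_S$ on $V_0$ gives the Poincar\'e inequality
\[
\lambda_S\sum_g|\eta(g)|^2 \;\le\; \sum_g\eta(g)L_S\eta(g)\;=\;\sum_{g,g'}|\nabla_g\eta|^2 S(g,g')K(g,g'),
\]
and substituting this into the previous Cauchy--Schwarz bound and squaring yields exactly $\sum_{g,g'}|\nabla_g\eta|^2 S(g,g')K(g,g')\le \lambda_S^{-1}\sum_g|\phi(g)|^2$.

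I expect no serious obstacle in this argument since it is essentially linear algebra on a finite-dimensional space; the only genuine points to verify are the identification $L_S=-\divv_g(\nabla_g\cdot S)$ (a sign-tracking calculation) and the fact that connectedness together with the strict positivity assumption on $S$ really does force $\ker L_S$ to be one-dimensional. The latter follows by the standard argument that $\nabla_g\eta(g,g')=0$ on every edge of positive weight, combined with connectedness of $(\G,K)$, implies $\eta$ is constant.
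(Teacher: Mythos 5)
Your argument is correct and is essentially the paper's own proof: both identify the operator with the graph Laplacian $L_S$ for the weights $2S(g,g')K(g,g')$, use connectedness plus strict positivity of $S$ on edges to show the kernel is the constants and invert on the mean-zero subspace, and then combine the Dirichlet identity from \eqref{eqn:DiscreteIntegrationParts2} with Cauchy--Schwarz and the Poincar\'e inequality for $\lambda_S$ to obtain the stated bound. No gaps.
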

\begin{proof}
The graph PDE can be written in matrix form as
\[   L_{S} \eta = -\phi, \]
where $\phi$ and $\eta$ are interpreted as vectors whose coordinates are indexed by the elements in $\G$, and where the matrix $L_S$ is the (unnormalized) graph Laplacian for a weighted graph $(\G, \omega)$ with weights $\omega_{g,g'}:= 2S(g,g') K(g,g')$--see \cite{Chung1996} for the definition of graph Laplacians. The assumptions on $S$ guarantee that the graph $(\G, \omega)$ is connected and thus its graph Laplacian $L_S$ is a positive semi-definite matrix with zero eigenvalue of multiplicity one. The assumption on $\phi$ guarantees that it belongs to the orthogonal complement of the null space of $L_S$, and thus is an element of the range of $L_{S}$. We conclude that the graph PDE indeed has a unique solution $\eta$ with average zero.

Finally, according to \eqref{eqn:DiscreteIntegrationParts2}, 
\[    \sum_{g,g'}|\nabla_g \eta(g,g')|^2S(g,g')K(g,g') =  \sum_{g} -\divv_{g}(\nabla_g \eta S)\eta(g) =  -\sum_{g} \phi(g)\eta(g) = \sum_{g} L_S\eta(g) \eta(g),       \]
and thus from Cauchy-Schwartz inequality it follows that
\[ \sum_{g,g'}|\nabla_g \eta(g,g')|^2S(g,g')K(g,g') \leq  \left(\sum_{g} |\phi(g)|^2 \right)^{1/2} \left(\sum_{g}|\eta(g)|^2 \right)^{1/2}.\]
From the fact that the graph $(\G,\omega)$ is connected it follows that
\[  \sum_{g} |\eta(g)|^2   \leq \frac{1}{\lambda_S} \sum_{g} L_S\eta(g) \eta(g),      \] 
where $\lambda_S$ is the first non-zero eigenvalue of $L_S$. Combining the above two inequalities we obtain the desired result. 

\end{proof}

\nc

\subsection{A Riemannian structure for semi-discrete OT}
\label{sec:Riemman}

Let us denote by $\mathcal{P}_2(\R^d \times \G)$ the space of Borel probability measures on $\R^d \times \G$ with finite second moments. In this section we introduce a metric $W_2$ on $\mathcal{P}_2(\R^d \times \G)$ which can be formally interpreted as the geodesic distance associated to a formal Riemannian structure on $\mathcal{P}_2(\R^d \times \G)$. Viewing $\mathcal{P}_2(\R^d \times \G)$ as a Riemannian manifold, in section \ref{sec:FormalGradient} we will be able to give a concrete heuristic interpretation for the gradient descent equation:
\begin{equation} 
\label{eqn:GradFlow}
\begin{cases} \dot \mu(t) = - \nabla_{W_2} \mathcal{E}(\mu(t))\\ \mu(0)= \mu_0, \end{cases}  \end{equation}
for a conveniently chosen function $\mathcal{E}$ on $  \mathcal{P}_2(\R^d \times \G) $ that depends on the objective function $V$ in \eqref{eq:semidiscreteopti}. Here $t \mapsto \mu_t$ describes a path in the space $\mathcal{P}_2(\R^d \times \G)$.


\subsubsection{A dynamic optimal transport problem in $\mathcal{P}_2(\R^d \times \G)$.}
Motivated by the (Euclidean) Otto Calculus discussed in section \ref{sec:OTEuclidean}, in order to define an optimal transport problem in the semi-discrete setting, we first introduce an appropriate notion of continuity equation. As in the Euclidean case, semi-discrete continuity equations are used to describe paths in the space $\mathcal{P}_2(\R^d \times \G)$. 

The definition of a semi-discrete continuity equation depends on the choice of a \textit{mobility function} $\theta$ which in full generality is a function of the form 
\[ \theta : \R^d \times \G \times \G \times \R_+\times \R_+ \longrightarrow \R_+.\]
In the remainder we will often write $\theta_{x,g,g'}(s,t)$ and drop the subscripts when no confusion may arise from doing so. The mobility function is used to quantify how easy it is to move mass from a point $(x,g)$ to a point $(x,g')$ when the amount of mass at each of these points is $s$ and $t$ respectively.  Mobilities as described above are motivated by the literature on discrete optimal transport. See \cite{Zhou,Maas,Mielke_2011,Mielke2013} where discrete optimal transport was first introduced and \cite{Erbar2012,Fathi,Esposito2019NonlocalinteractionEO} for other references where the topic has been developed further. A rigorous passage to the limit from discrete OT to OT in $\R^d$, at least for certain classes of geometric graphs, has been explored in \cite{gigmaas,GarcaTrillos2020,HomogeneizationOT,ScalinOT}.

Throughout the paper we will make the following assumptions on $\theta$. 
These assumptions are closely related to those in \cite{Erbar2012,Maas} for discrete OT.

\begin{assumptions}
\label{assump:theta}
The mobility function $\theta$ satisfies either:
\begin{enumerate}
    \item[(A0)] $\theta$ is non-zero, does not depend on $s,t$ and satisfies the symmetry condition: $\theta_{x,g,g'}$ is equal to $\theta_{x,g',g}$ for all $x\in \R^d$, $g,g' \in \G$. In addition, $\theta_{x,g,g}$ is uniformly bounded away from zero on compact sets of $\R^d \times \G \times \G$.  
\end{enumerate}

\textbf{or} all of the following

\begin{enumerate} 
		\item[(A1)] Symmetry: $\theta_{x,g,g'}(s,t) = \theta_{x,g,g'}(t,s) $ for all $s,t$.
		\item[(A2)] Differentiability: The function $\theta_{x,g,g'}(\cdot, \cdot)$ is differentiable.
		\item[(A3)] Monotonicity: $\theta_{x,g,g'}(r,t) \leq \theta_{x,g,g'}(s,t)$ for all $r \leq s$ and all $t$.
		\item[(A4)] Positive homogeneity: $\theta_{x,g,g'}(\lambda s , \lambda t) = \lambda \theta_{x,g,g'}(s, t)$ for all $\lambda \geq 0$ and all $s,t$.
		\item[(A5)] The quantity
		\[ C_{x,g,g'}:= \int_{0}^1 \frac{1}{\sqrt{\theta_{x,g,g'}(1-t, t)}} dt ,\]
		is uniformly bounded above on compact subsets of $\R^d \times \G \times \G$, and the quantity $\theta_{x,g,g'}(1,1)$ is uniformly bounded away from zero on compact subsets of $\R^ d \times \G \times \G$.
	\end{enumerate}
	\end{assumptions}

\begin{definition}\label{def:SemiDiscreteContEq}
In what follows, we consider $v_t: \R^d \times \G \rightarrow \R^d$, $h_t : \R^d \times \G \times \G \rightarrow \R$ and $\mu_t \in \mathcal{P}_2(\R^d \times \G)$. We say that $t\in[0,T]\mapsto(\mu_{t},v_{t},h_{t})$ satisfies the semi-discrete continuity equation and write
\begin{equation}
\label{def:SemiDiscreteCENota}
\dot{\mu}_{t}+\divv_{x}(v_{t}\mu_{t})+\divv_{g}(h_{t}\mu_{t})=0,
\end{equation}
if for all smooth test functions $\zeta \in C_c^\infty(\R^d \times \G)$  (i.e. $\zeta(\cdot, g)$ is $C^\infty_c(\R^d)$ for all $g \in \G$) \nc we have
\begin{align}\begin{aligned}\label{def:SemiDiscreteContEq_{0}}\frac{d}{dt}\int_{\R^d}\sum_{g} \zeta(x,g)d\mu_{t} & =\int_{\mathbb{R}^{d}}\sum_{g}\nabla_{x}\zeta(x,g)\cdot v_{t}(x,g)d\mu_{t}\\
 & +\int_{\mathbb{R}^{d}}\sum_{g,g^{\prime}} \nabla_g \zeta(x,g,g') h_{t}(x,g,g^{\prime}) d\hat{\mu}_{t}(x,g,g^{\prime}).
\end{aligned}
\end{align}

In the above expression, for a given $\mu \in \mathcal{P}_2(\R^d \times \G),$ we use $\hat{\mu}$ to denote the measure on $\R^d \times \G \times \G$ given by
\[ d \hat{\mu}(x,g,g') = \theta_{x,g,g'} dxdgdg'  \]
when $\theta$ satisfies $(A0)$ in Assumption \ref{assump:theta} and
\[ d \hat{\mu}(x,g,g') =  \theta(\mu_{g|x}(g), \mu_{g|x}(g')) d \mu_x(x)dg dg'.\]
when $\theta$ satisfies $(A1)-(A5)$ instead. Here $\mu_{g|x}$ denotes the conditional distribution of $g$ given $x$.  Also, here and in the remainder $dg$ represents the measure on $\G$ that gives mass one to every element of $\G$.\nc
\end{definition}

\begin{remark}
We notice that when $\mu$ has a density with respect to $dxdg$, i.e.,
\[ d\mu(x,g)= f(x,g) dxdg, \]
then
\[ d\hat{\mu}(x,g,g')= \theta(f(x,g), f(x,g'))dx dg dg'. \]
Indeed, this is immediate if $\theta $ satisfies $(A0)$ and otherwise follows from the homogeneity of the mobility $\theta$, i.e. condition $(A4)$.
\end{remark}

\begin{remark}
Let $t\in[0,T]\mapsto(\mu_{t},v_{t},h_{t})$ be a solution to the semi-discrete continuity equation and suppose that for every $t$,  $\mu_{t}$ is absolutely continuous with respect to $dxdg$ and has density  $f_t(x,g).$ Additionally, suppose that the mappings $(t,x,g)\mapsto f(t,x,g)$, $(t,x,g) \mapsto v_t(x,g,g')$ and $(t,x,g) \mapsto h_t(x,g,g')$  are all smooth. In that case we can see that for every test function $\zeta\in C_{c}^{\infty}(\R^{d}\times\G)$ we have 
\begin{align*}\begin{aligned}
&\int_{\R^{d}}  \sum_{g}\zeta(x,g)\frac{\partial}{\partial t}f_{t}(x,g)dx=\frac{d}{dt}\int_{\R^{d}}\sum_{g}\zeta(x,g)d\mu_{t}(x,g)\\
 & = \int_{\R^{d}}\sum_g\nabla_x\zeta(x,g)\cdot v_{t}(x,g)d\mu_{t} +\int_{\R^{d}}\sum_{g,g'} \nabla_g \zeta(x,g,g')h_t(x,g,g')K(g,g')\theta(f_t(x,g), f_t(x,g') )dx
 \\ & = -\int_{\R^{d}} \sum_g\zeta \divv_x(  v_{t} f_t )dx -\int_{\R^{d}}\sum_{g} \zeta \divv_g(h_t \cdot \hat{f}_t) dx,
\end{aligned}
\end{align*}
where $\hat{f}_t(x,g,g'):= \theta(f_t(x,g), f_t(x,g'))$. The last equality follows using integration by parts in $x$ for the first term and in $g$ for the second term (i.e. identity \eqref{eqn:DiscreteIntegrationParts}). We conclude that 
	\[\frac{\partial}{\partial t}f_t+\text{\normalfont{div}}_{x}\hspace{1mm}( v_{t} f_t) + \text{\normalfont{div}}_{g}\hspace{1mm}(h_{t} \hat{f})(x,g)=0,\quad\forall t,x,g.
\]
which justifies the notation \eqref{def:SemiDiscreteCENota} used in Definition \ref{def:SemiDiscreteContEq}.
	\end{remark}

With the above notion of continuity equation in hand, we are now able to introduce the following dynamic optimal transport problem.

 \begin{definition} \label{def:SemiDiscreteOTdist}
Let $\mu_{0}$ and $\mu_{1}$ be two elements in $\P_2(\mathbb{R}^{d} \times \mathcal{G})$.
We define 
\begin{align}\begin{aligned}\label{kinetic_E} W_2(\mu_{0},\mu_{1})^{2}  & :=\inf_{t \in[0,1] \mapsto (\mu_{t},\nabla_x \phi_t,\nabla_g \psi_t)}  \int_{0}^{1} \biggr(  \int_{\R^d} \sum_{g\in\mathcal{G}}  | \nabla_x \phi_t (x,g)|^{2}d\mu_{t}(x,g)
\\ & +\int_{\R^{d}}\sum_{g,g^{\prime}} (\nabla_g \psi_t(x,g,g^{\prime})){}^{2}K(g,g^{\prime})d\hat{\mu}_{t}(x,g,g^{\prime}) \biggr) dt,\nc
\end{aligned}
\end{align}
\nc where the infimum is taken among all solutions to the semi-discrete continuity equation of the form $t \in [0,1] \mapsto (\mu_t, \nabla_x \phi_t,  \nabla_g \psi_t)$, where $\phi_t: \R^d \times \G \rightarrow \R$ and $\psi_t: \R^d \times \G \rightarrow \R$.
\end{definition}

In words, $W(\mu_0,\mu_1)^2$ is obtained by minimizing the \textit{total kinetic energy} associated to paths connecting $\mu_0$ and $\mu_1$. In section \ref{ap:heuristic} we rigorously show that $W_2$ as defined above, is indeed a metric on the space $\mathcal{P}_2(\R^d \times \G)$. The precise statement is the following. 
\begin{theorem}
\label{thm:Dist}
Let $(\G, K)$ be a connected weighted graph, where $K$ is a symmetric weight matrix with non-negative entries. Suppose that the mobility function $\theta:   \R^d \times \G \times \G \times \R \times \R \rightarrow [0,\infty)$ satisfies Assumptions \ref{assump:theta}. Then,
$W_2$ as introduced in Definition \ref{def:SemiDiscreteOTdist} is a metric on the space $\mathcal{P}_2(\R^d \times \G)$.
\end{theorem}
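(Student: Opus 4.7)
The plan is to verify the four axioms of a metric together with finiteness on $\mathcal{P}_2(\R^d\times\G)$, following the dynamic-formulation strategy of Benamou--Brenier and its discrete/graph analogues \`a la Maas and Erbar. Nonnegativity is built into the definition, and the constant path $(\mu_t,0,0)\equiv(\mu,0,0)$ trivially solves the semi-discrete continuity equation, giving $W_2(\mu,\mu)=0$. For symmetry, given any admissible competitor $(\mu_t,\nabla_x\phi_t,\nabla_g\psi_t)$ connecting $\mu_0$ to $\mu_1$, the time-reversed triple $(\mu_{1-t},-\nabla_x\phi_{1-t},-\nabla_g\psi_{1-t})$ connects $\mu_1$ to $\mu_0$; its kinetic energy coincides with the original because the spatial integrand is quadratic in $\nabla_x\phi$, while the graph integrand is quadratic in $\nabla_g\psi$ with both $K(g,g')$ and $\theta_{x,g,g'}$ symmetric under the swap $g\leftrightarrow g'$ that accompanies the reversal.

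For the triangle inequality I would use the classical concatenation/reparameterization argument. Given $\varepsilon$-optimal paths on $[0,1]$ joining $\mu_0$ to $\mu_1$ and $\mu_1$ to $\mu_2$ with kinetic energies $E_1$ and $E_2$, one rescales time so that the first path runs on $[0,\lambda]$ and the second on $[\lambda,1]$. Potentials scale by $1/\lambda$ and $1/(1-\lambda)$, so the squared-velocity integrals scale as $E_1/\lambda$ and $E_2/(1-\lambda)$; optimizing in $\lambda$ at the value $\lambda=\sqrt{E_1}/(\sqrt{E_1}+\sqrt{E_2})$ gives $(\sqrt{E_1}+\sqrt{E_2})^2$, and sending $\varepsilon\to 0$ yields $W_2(\mu_0,\mu_2)\le W_2(\mu_0,\mu_1)+W_2(\mu_1,\mu_2)$.

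For identification of indiscernibles I would apply Cauchy--Schwarz directly to the weak form of the semi-discrete continuity equation. Given $\zeta\in C_c^\infty(\R^d\times\G)$ and any admissible competitor,
\[
\Bigl|\textstyle\int\zeta\,d\mu_1-\int\zeta\,d\mu_0\Bigr|\le\int_0^1\!\!\int|\nabla_x\zeta|\,|\nabla_x\phi_t|\,d\mu_t\,dt+\int_0^1\!\!\int|\nabla_g\zeta|\,|\nabla_g\psi_t|\,K\,d\hat{\mu}_t\,dt,
\]
and Cauchy--Schwarz in $t$ and in $(x,g)$ (resp.\ $(x,g,g')$) bounds the right-hand side by $C_\zeta\sqrt{E}$, where $E$ is the total kinetic energy and $C_\zeta$ depends on $\|\nabla_x\zeta\|_\infty$, $\|\nabla_g\zeta\|_\infty$, $\max_{g,g'}K(g,g')$, and a uniform bound on $\hat{\mu}_t(\R^d\times\G\times\G)$; this last quantity is finite under $(A0)$ since $\theta$ is bounded on compact sets, and under $(A1)$--$(A5)$ because positive homogeneity applied to the conditional densities yields $\hat{\mu}_t(\R^d\times\G\times\G)\le C|\G|^2$. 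Taking the infimum over competitors forces $\int\zeta\,d\mu_0=\int\zeta\,d\mu_1$ for every test function, hence $\mu_0=\mu_1$.

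The main obstacle is the finiteness statement $W_2(\mu_0,\mu_1)<\infty$, especially under $(A1)$--$(A5)$ where $\hat{\mu}$ can degenerate along the graph direction as the conditional marginal vanishes. The plan is a two-stage interpolation preceded by a standard mollification/approximation step (allowing us to assume $\mu_0,\mu_1$ have smooth strictly positive densities on a common compact spatial domain, and then to close up via the already-proved triangle inequality). The first stage fixes the spatial marginal and moves the graph conditional linearly, $f_t(x,g)=(1-t)f_0(x,g)+tf_1(x,g)$; applying Proposition \ref{lemm:GraphPDE} fiberwise in $x$ produces $\psi_t$ solving the graph Poisson equation $\divv_g(\nabla_g\psi_t\cdot\theta(f_t(x,\cdot),f_t(x,\cdot)))=-\partial_t f_t(x,\cdot)$, and assumption $(A5)$ is precisely what controls the integrand $\int(\nabla_g\psi_t)^2K\,d\hat{\mu}_t$ near times at which a conditional density crosses zero. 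The second stage is a standard Euclidean Benamou--Brenier interpolation fiber-by-fiber in $g$, whose cost is finite by the second-moment assumption. Case $(A0)$ is strictly easier: $\theta$ is bounded away from zero on compact sets independently of the masses, so Proposition \ref{lemm:GraphPDE} applied to a single linear interpolation of the joint densities immediately yields a finite-energy path without the delicate $(A5)$ analysis.
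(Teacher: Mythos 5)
Your treatment of degeneracy, symmetry, the triangle inequality, and the identification of indiscernibles is sound and essentially coincides with the paper's proof (the paper reparametrizes by arc length and concatenates on $[0,2]$ rather than optimizing the splitting time $\lambda$, but the two gluing arguments are equivalent; the Cauchy--Schwarz argument for $\mu_0=\mu_1$ is the same).

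The genuine gap is in the finiteness argument under $(A1)$--$(A5)$. First, the reduction to smooth strictly positive densities is circular: to ``close up via the triangle inequality'' you must bound $W_2(\mu_0,\mu_0^{\varepsilon})$, and if $\mu_0$ gives zero mass to some fiber $\R^d\times\{g\}$, making the conditional densities strictly positive requires moving mass across the graph --- exactly the finiteness statement being proved (spatial mollification alone leaves a vanishing fiber vanishing). Second, and more seriously, the claim that $(A5)$ controls the energy of the linear interpolation $f_t=(1-t)f_0+tf_1$ near a vanishing conditional density is false. Along a linear interpolation the flux is of constant order while the mobility degenerates, so the graph kinetic energy scales like $\int_0^1 \theta_{x,g,g'}(1-t,t)^{-1}\,dt$, whereas $(A5)$ only guarantees $\int_0^1\theta_{x,g,g'}(1-t,t)^{-1/2}\,dt<\infty$; by Cauchy--Schwarz the former dominates the square of the latter and can diverge when the latter is finite. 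Concretely, for the harmonic-type mobility $\theta(a,b)=2ab/(a+b)$ one has $\theta(1-t,t)\sim 2t$ as $t\to 0$, so $(A5)$ holds but $\int_0^1\theta(1-t,t)^{-1}\,dt=\infty$: the linear path between Dirac masses at adjacent nodes has infinite energy even though the discrete transport distance is finite. The paper avoids this by routing through the intermediate measure $\sum_g m_g\,\tilde\mu_1\otimes\delta_g$ (Euclidean OT on each fiber first, then a purely discrete redistribution at each $x$) and invoking the finiteness of discrete optimal transport under $(A0)$ or $(A5)$ from Maas and Erbar, whose node-to-node paths are time-reparametrized precisely so that their cost is $\bigl(\int_0^1\theta(1-s,s)^{-1/2}\,ds\bigr)^2$; you would need to cite that result or reproduce the adapted (non-linear-in-time) interpolation for the graph stage. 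A smaller inconsistency: $f_t=(1-t)f_0+tf_1$ fixes the spatial marginal only if $\sum_g f_0(x,g)=\sum_g f_1(x,g)$ a.e.\ (which is also the solvability condition for the fiberwise graph Poisson equation), so your two stages compose into a path from $\mu_0$ to $\mu_1$ only after inserting an explicit intermediate measure carrying the spatial marginal of $\mu_0$ and the fiber masses of $\mu_1$.
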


\begin{remark}
In the above definition we have introduced the semi-discrete Wasserstein distance as an optimization problem over a specific class of solutions to the continuity equation, namely, solutions whose driving vector fields are gradients of potentials. It is actually possible to show that removing the restriction to this smaller class of vector fields does not change the definition given. We have introduced $W_2$ in this way for convenience.  

Later on we will show that the class of vector fields can actually be restricted even further (at least for regular enough measures). In particular the potentials $\phi$ and $\psi$ may be taken to be the same. This observation will be useful when interpreting $\mathcal{P}_2(\R^d \times \G)$ as a formal Riemannian manifold with geodesic distance that coincides with $W_2$.     
\end{remark}

\begin{remark}
The definition given in \eqref{def:SemiDiscreteOTdist} is a particular case of the formal definition given in \cite{Mielke_2011}. A closely related construction is also explored in \cite{ChenVectorValued} under the name of ``vector-valued optimal transport", as well as in \cite{ChenVectorValuedComputational}, which introduces an algorithm used to solve applied problems in color image processing and multi-modality imaging. Here we present some heuristic computations providing a characterization of tangent planes (see the informal Theorem \ref{single_potantial} below and its rigorous counterpart in section \ref{sec:tangentplanes}), and a formal computation of the acceleration of curves which in turn motivates: 1) geodesic equations, and 2) accelerated methods for optimization (see sections \ref{sec:Hamiltonian} and \ref{sec:geodesic}).
\end{remark}

\nc

\subsubsection{A formal Riemannian structure for $(\mathcal{P}_2(\R^d \times \G), W_2)$}



In differential geometry, when working in the setting of a smooth manifold $\M$, a tangent vector at a given point $q$ is interpreted as the velocity of a curve in $\M$ when passing through $q$. The collection of tangent vectors at $q$, i.e. $q$'s tangent plane, is typically denoted by $\T_q \M$. When $\M$ is endowed with a Riemannian structure, one can compute inner products $\langle p , \tilde p \rangle_q$ between elements $p, \tilde p \in T_q\M$ and introduce a notion of distance between points $q, \tilde q \in \M$ according to
\[ d(q, \tilde q)^2 := \inf_{t \in [0,1] \mapsto q(t) } \int_{0}^1 \langle \dot{q}(t) ,\dot{q}(t) \rangle_{q(t)} dt,\]
where the infimum ranges over all paths connecting $q$ to $\tilde q$.

We now provide some heuristics that motivate how the space $(\mathcal{P}_2(\R^d \times \G), W_2)$ can actually be interpreted in light of this Riemannian formalism. The first step is an informal statement that will justify some of the subsequent discussion. A precise (and rigorous) version will be presented in section \ref{sec:tangentplanes}.

\begin{theorem}\textbf{Characterization of potentials (informal)}\label{single_potantial} Let $t\rightarrow\mu_{t},$ be an arbitrary curve in $\mathcal{P}_2(\R^d \times \G)$ with velocity fields generated by the potentials $(\phi_{t},\psi_{t})$. Then, we can replace the potentials with a pair of the form $(\varphi_{t},\varphi_{t})$ such that it acts as a velocity field for the same curve $t\rightarrow\mu_{t},$ and has minimal total kinetic energy.
\label{TangentPlanes}
\end{theorem}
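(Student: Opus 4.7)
The plan is to treat the claim, at each fixed time $t$, as a Hilbert-space projection problem in the spirit of the Helmholtz decomposition. Fix $\mu_t$ and define, on pairs $(v,h)$ with $v:\R^d\times\G\to\R^d$ and $h:\R^d\times\G\times\G\to\R$, the inner product
\[
\langle (v,h),(v',h')\rangle_{\mu_t}:=\int_{\R^d}\sum_g v\cdot v'\,d\mu_t+\int_{\R^d}\sum_{g,g'} h\,h'\,K(g,g')\,d\hat\mu_t,
\]
and let $H_{\mu_t}$ be the corresponding Hilbert completion. The instantaneous kinetic-energy minimization inside \eqref{kinetic_E} is precisely the problem of finding the element of minimal norm in the affine subspace
\[
\mathcal{A}_{\dot\mu_t}:=\{(v,h)\in H_{\mu_t}:\ \divv_x(v\mu_t)+\divv_g(h\hat\mu_t)=-\dot\mu_t\ \text{in distribution}\}.
\]
By hypothesis $(\nabla_x\phi_t,\nabla_g\psi_t)\in\mathcal{A}_{\dot\mu_t}$, so $\mathcal{A}_{\dot\mu_t}$ is a nonempty closed affine subspace. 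The minimum-norm element exists, is unique, and is characterized by orthogonality to the linear subspace $\mathcal{K}_{\mu_t}:=\mathcal{A}_0$ of ``incompressible'' pairs.

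The key observation is that single-potential gradient pairs $(\nabla_x\varphi,\nabla_g\varphi)$ lie in $\mathcal{K}_{\mu_t}^{\perp}$. Indeed, for any test pair $(w,k)\in\mathcal{K}_{\mu_t}$, combining the ordinary integration by parts in $x$ with the discrete identity \eqref{eqn:DiscreteIntegrationParts} yields
\[
\langle(\nabla_x\varphi,\nabla_g\varphi),(w,k)\rangle_{\mu_t}=-\int_{\R^d}\sum_g\varphi\bigl(\divv_x(w\mu_t)+\divv_g(k\hat\mu_t)\bigr)=0.
\]
By the orthogonal-decomposition characterization of the projection, to identify the minimizer as having single-potential form it is enough to construct $\varphi_t$ so that $(\nabla_x\varphi_t,\nabla_g\varphi_t)\in\mathcal{A}_{\dot\mu_t}$; equivalently, so that it solves the semi-discrete elliptic equation
\[
\divv_x(\nabla_x\varphi_t\,\mu_t)+\divv_g(\nabla_g\varphi_t\,\hat\mu_t)=\divv_x(\nabla_x\phi_t\,\mu_t)+\divv_g(\nabla_g\psi_t\,\hat\mu_t).
\]

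The main obstacle is the solvability and regularity of this mixed continuous/discrete Poisson problem. I would attack it variationally, by minimizing the quadratic functional
\[
J(\varphi):=\tfrac12\int_{\R^d}\sum_g|\nabla_x\varphi|^2\,d\mu_t+\tfrac12\int_{\R^d}\sum_{g,g'}|\nabla_g\varphi|^2\,K(g,g')\,d\hat\mu_t-\bigl\langle(\nabla_x\phi_t,\nabla_g\psi_t),(\nabla_x\varphi,\nabla_g\varphi)\bigr\rangle_{\mu_t}
\]
over a weighted Sobolev-type space on $\R^d\times\G$ quotiented by constants. The discrete component of the bilinear form is coercive along the $g$-fibres thanks to the positivity of $\theta$ on the support of $\mu_t$ (either $(A0)$ or the bound on $\theta_{x,g,g'}(1,1)$ from $(A5)$), which is exactly the hypothesis needed to invoke Proposition \ref{lemm:GraphPDE} fibrewise; the continuous part is the standard weighted Laplacian with weight $f_t$. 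Combining these, Lax--Milgram on the natural weighted Hilbert space yields a unique zero-mean solution $\varphi_t$, modulo the delicate point of handling degeneracy of $f_t$ near the boundary of its support. This is the step I expect to require the most work, and in the rigorous version presented in Section \ref{sec:tangentplanes} it is presumably carried out under regularity assumptions on the curve $t\mapsto\mu_t$ that ensure the bilinear form is coercive and the source pair is $H_{\mu_t}$-admissible. Once $\varphi_t$ is obtained, the Hilbert projection characterization instantly implies that $(\nabla_x\varphi_t,\nabla_g\varphi_t)$ drives the same curve as $(\nabla_x\phi_t,\nabla_g\psi_t)$ and minimizes the integrand of \eqref{kinetic_E} pointwise in $t$, and integration in $t$ concludes the proof.
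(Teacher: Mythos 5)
Your route is correct in outline but genuinely different from the paper's. You set up the instantaneous problem as a Hilbert-space projection onto the affine set of admissible velocity pairs, observe that single-potential gradient pairs $(\nabla_x\varphi,\nabla_g\varphi)$ are orthogonal to the divergence-free pairs, and reduce everything to exhibiting one such pair in the admissible set, i.e.\ to solving the coupled semi-discrete elliptic equation $\divv_x(\nabla_x\varphi\,\mu_t)+\divv_g(\nabla_g\varphi\,\hat\mu_t)=\divv_x(\nabla_x\phi_t\,\mu_t)+\divv_g(\nabla_g\psi_t\,\hat\mu_t)$ for a \emph{single} unknown $\varphi$. The paper (Lemma \ref{prop_tangent}, Lemma \ref{kernel_perp}, and the proposition following them) never solves that coupled equation. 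Instead it first obtains a minimizing \emph{pair} $(\tilde\phi,\tilde\psi)$: existence is easy after relaxing the discrete component to a general antisymmetric $h$ (a clean projection in the weighted $L^2$ space of vector fields), and $h$ is then upgraded to a discrete gradient \emph{fibrewise} via the purely finite-dimensional graph PDE of Proposition \ref{lemm:GraphPDE}. A perturbation/Euler--Lagrange argument (perturbing by $\eta,\beta$ solving an $x$-PDE and a graph PDE with opposite sources $\pm\veps$, $\veps\in\Phi$) then shows any minimizing pair satisfies $\tilde\phi-\tilde\psi\in\Phi^\perp$, and the structural Lemma \ref{kernel_perp} identifies $\Phi^\perp$ as functions of the form $\varphi_1(x)+\varphi_2(g)$; since such functions have vanishing $\nabla_x$- and $\nabla_g$-gradients in the relevant slots, setting $\varphi:=\tilde\phi-\varphi_2=\tilde\psi+\varphi_1$ merges the two potentials without changing either gradient. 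What each approach buys: yours is the standard Otto-calculus orthogonal decomposition and gives uniqueness of the minimal velocity field for free, but it concentrates all the difficulty in the global solvability of the coupled elliptic problem on the unbounded, possibly degenerately weighted space $\R^d\times\G$ --- precisely the step you flag as hard and leave open (Lax--Milgram needs a Poincar\'e inequality there, and Proposition \ref{lemm:GraphPDE} cannot be invoked ``fibrewise'' once $x$ and $g$ are coupled through one unknown; moreover the abstract projection a priori lives only in the \emph{closure} of gradient pairs). The paper's decomposition is designed to circumvent exactly this: the continuous and discrete elliptic problems are only ever solved separately, and the ``single potential'' claim is converted into the elementary measure-theoretic statement of Lemma \ref{kernel_perp}, which is why the rigorous version goes through under only local boundedness assumptions on the density.
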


The above suggests that there is some redundancy when considering different potentials $\phi, \psi$ and actually one may take both potentials to be the same. Indeed, such a characterization allows us to formally identify the tangent plane at a measure $\mu$ in $\mathcal{P}_{2}(\mathbb{R}^{d}\times\mathcal{G})$ as:
\begin{align}\begin{aligned}\label{t_plane} T_{\mu}\mathcal{P}_{2}(\mathbb{R}^{d}\times\mathcal{G}) & :=\bigg\{\varphi\hspace{1mm}:\hspace{1mm}\int_{\mathbb{R}^{d}\times\mathcal{G}}|\nabla_{x}\varphi|^{2}d\mu(x,g)\\
 & +\int_{\mathbb{R}^{d}\times\mathcal{G}\times\mathcal{G}}[\varphi(x,g^{\prime})-\varphi(x,g)]^{2}K(g,g^{\prime})d\hat{\mu}(x,g,g^{\prime})<\infty\bigg\}.
\end{aligned}
\end{align}
endowed with the inner product:
\begin{align}
\label{def:InnerProduct}
\begin{split}
\langle  \varphi ,  \widetilde{\varphi}  \rangle_{\mu}:= &\int_{\R^d } \sum_g\nabla_{x}\varphi(x,g)\cdot \nabla_{x}\widetilde{\varphi}(x,g)d\mu(x,g)
\\ & +\int_{\mathbb{R}^{d}} \sum_{g,g'}\nabla_g \varphi \cdot \nabla_g  \tilde {\varphi} K(g,g') d\hat{\mu}(x,g,g^{\prime}).
\end{split}
 \end{align}
A rigorous definition of the tangent plane is out of the scope of this work. Putting aside all technicalities, we can observe formally that the semi-discrete Wasserstein distance $W_2$ from Definition \eqref{def:SemiDiscreteOTdist} can be rewritten as 
\[ W_{2}^{2}(\mu_{0},\mu_{1})=\inf\int_{0}^{1}\langle \varphi_{t},\varphi_{t}\rangle_{\mu_{t}}dt, \]
where the inf ranges over solutions to the continuity equation $t \in [0,1]\mapsto (\mu_t, \nabla_x \varphi_t, \nabla_g \varphi_t) $ connecting $\mu_0$ with $\mu_1$ (i.e., over paths in $\mathcal{P}_2(\R^d \times \G)$, according to the informal Theorem \ref{TangentPlanes}): this formula and its interpretation reveal the Riemannian structure of the metric $W_2$.  In the next subsection we use this Riemannian formalism to motivate a concrete interpretation for \eqref{eqn:GradFlow}.

\subsection{Computation of gradient flows using the Riemannian formalism}
\label{sec:FormalGradient}

In this section we use the Riemannian formalism for $(\mathcal{P}_2(\R^d \times \G), W_2)$ discussed in the previous section to motivate a definition for the gradient of a given energy function $\EE:\mathcal{P}_{2}(\R^{d}\times\G)\rightarrow\R\cup\{\infty\}$, and ultimately give a concrete meaning to the gradient flow ODE \eqref{eqn:GradFlow}. Looking forward to our applications, here we will focus on energies of the form
\begin{equation}\label{def:RelativeEntropy}\mathcal{E}(\mu):=\begin{cases}
 & \int_{\mathbb{\mathbb{R}}^{d}}\sum_g \ee(f(x,g),x,g)\hspace{1mm}dx\hspace{1em}\textnormal{if}\hspace{1em}d\mu(x,g)=f(x,g)\hspace{1mm}dxdg\\
 & +\infty\hspace{1em}\textnormal{otherwise},
\end{cases}\ 
\end{equation}
where $\ee:[0,\infty)\times{\mathbb{R}^{d}} \times \G \rightarrow\mathbb{\mathbb{R}}$ is given by
\[\ee(r,x,g):=r\log r+V(x,g)r.\ \ 
\]
We think of the function $V:\R^{d}\times\G\rightarrow\R$ as the objective of the semi-discrete optimization problem \eqref{eq:semidiscreteopti}. Here we assume for simplicity that $V$ is differentiable in the $x$ coordinate. Notice that $\EE$ is a relative entropy and can be written as the sum of the two terms
\[\mathcal{E}(\mu)=H(\mu)+\int_{\R^{d}\times\G}V(x,g)d\mu(x,g),\ 
\]
where $H$ denotes the (negative) entropy of $\mu$ when the base measure on $\R^{d}\times\G$ is the product measure $dx dg$. The entropy term $H$ may be multiplied by a positive factor for generality without that entailing any meaningful changes in the computations below. This choice of energy is motivated by the discussion presented in section \ref{sec:OTEuclidean}.

Let us recall that in Riemannian geometry, the gradient of a differentiable function $E: \M \rightarrow \R$ at a point $q $ is defined as a tangent vector $\nabla_\M E(q)$ at $q$ characterized by: for every smooth curve $t\in(-\veps, \veps) \mapsto q(t) \in \M$ with $q(0)=q$,
\[   \langle  \nabla_\M E(q) ,  \dot{q}(0)  \rangle_q = \frac{d}{dt} E(q(t)) \biggr \rvert_{t=0}.  \]
In words, the above means that the gradient of a given function $E$ at a given point $q$ on the Riemannian manifold $\M$ serves as Riesz representer (with respect to the inner product at that point) for the map of directional derivatives of the function $E$ at the point $q$.

Using the above discussion as motivation we notice that for arbitrary $\mu\in\mathcal{P}_{2}(\R^{d}\times\G)$ such that $\EE(\mu)<\infty$, the gradient of $\EE$  (with respect to $W_2$) at the point $\mu$ must be interpreted as a potential $\varphi_\mu$. Our goal is to identify $\varphi_\mu$. In order to achieve this, we consider $t\in(-\veps,\veps)\mapsto({\mu}_{t}, \nabla_x \psi_t, \nabla_g \psi_t)$ an arbitrary curve in $\mathcal{P}_{2}(\R^{d}\times\G)$ which at time $t=0$  passes through the point $\mu$  (i.e. ${\mu}_{0}=\mu$ ). We assume $d \mu_t= f_t dxdg$ and write $f=f_0$. We want $\varphi_\mu$ to satisfy
\begin{equation}\label{def:RiemannianGradientSemiDiscrete}\langle\varphi_\mu,\psi_0\rangle_{\mu}=\frac{d}{dt}\mathcal{E}(\mu_{t}) \biggr \rvert_{t=0}.
\end{equation}
A formal computation shows that
\begin{align*}\begin{split}\frac{d}{dt}\mathcal{E}( \mu_{t}) \biggr \rvert_{t=0} 
 & =\frac{d}{dt}\bigg|_{t=0}\int_{\R^{d}} \sum_{g}\bigg(\log f_{t}+V\bigg)f_{t}dx\\
 & =\int_{\R^{d}} \sum_g\bigg(\log f_0+1+V\bigg)\partial_t {f}_{0}(x,g)dx.\\
\end{split}
\end{align*}
Using the semi-discrete continuity equation the last line can be rewritten as
\begin{align*}\begin{aligned}
&\int_{\R^d } \sum_g \nabla_{x}(\log f  +V)\cdot\nabla_{x}\psi_0 d\mu(x,g) +\int_{\mathbb{R}^{d}} \sum_{g,g^{\prime}} \nabla_g( \log f + V  ) \cdot \nabla_g \psi_0 K(g,g') d\hat{\mu}(x,g,g^{\prime}),\\
\end{aligned}
\end{align*}
which in turn can be rewritten as $\langle \log f + V , \psi_0 \rangle_\mu$. It follows that $\varphi_{\mu}$ can be taken to be
\begin{equation}\label{formal_gradient} \nabla_{W_2} \EE(\mu) := \varphi_{\mu}=\log f +V .\ 
\end{equation}

Having found the gradient of $\mathcal{E}$  through the above heuristic computations we can now give a concrete interpretation to \eqref{eqn:GradFlow} by plugging in the potential $-(\log f + V)$ in the semi-discrete continuity equation. In particular, $t\in[0,\infty)\rightarrow\mu_{t}$  in \eqref{eqn:GradFlow} is interpreted as 
\[d\mu_{t}(x,g)=f_{t}(x,g)dxdg, 
\]
where $f_t$ follows \eqref{eqn:GradFlowE}. Equation \eqref{eqn:GradFlowE} can be described as a coupled system of \textit{reaction-diffusion} equations indexed by $g\in\G$. The presence of the last term in \eqref{eqn:GradFlowE} is responsible for the coupling of the dynamics. From the transport point of view this coupling term induces mass to be exchanged between different nodes (and thus the total mass at a single $g\in\G$  changes in time). From the optimization point of view, a coupled system implies that information on the optimization over parameters $x$  for a given node $g$ is  used for the optimization of parameters $x$ for nearby nodes $g'$ and vice versa.


We finish this section with two examples of mobility functions $\theta$ and their corresponding gradient flows.

\begin{example}\label{exam:MobilityMassIndep}
Let $W:\mathbb{R}^{d}\rightarrow\mathbb{R}$ be a function in the Sobolev space $W^{1,2}(\mathbb{R}^{d})$ satisfying     
\begin{equation}\label{mob_2moments}\int|x|^{2}e^{-W}dx<\infty.
\end{equation}
We define a mass independent mobility $\theta$  according to
\[\theta_{x,g,g'}(s,t):=e^{-W(x)}.\ 
\]
This mobility function satisfies $(A0)$ in Assumptions \ref{assump:theta}. We notice that in the corresponding optimal transport problem from definition \eqref{def:SemiDiscreteOTdist} the transfer of mass between points $(x,g)$ and $(x,g')$ is cheap precisely when $W(x)$ is large. We also notice that the cost of transporting mass along the graph $\G$  does not depend on the actual amount of mass that is initially located at the nodes of $\G$, a situation that contrasts with the one presented in the next example.

Finally, for this choice of mobility the system of equations \eqref{eqn:GradFlowE} becomes the system of non-linear reaction diffusion equations:
\begin{align}\begin{aligned}\label{reacd_{E}}\partial_{t}f(x,g)=\Delta_{x}f_t(x,g) & +\divv_{x}(f_t(x,g)\nabla_{x}V(x,g))+\sum_{g^{\prime}\in\mathcal{G}}\big[\log f(x,g)+V(x,g)\\
 & \hspace{1em}\hspace{1em}\hspace{1em}\hspace{1em}\hspace{1em}\hspace{1em}\hspace{1em}\hspace{1em}-(\log f(x,g^{\prime})+V(x,g^{\prime})\big]K(g,g^{\prime})e^{-W(x)}.
\end{aligned}
\end{align}
\end{example}
\begin{example}
Suppose that the mobility $\theta$  takes the form
\[\theta_{x,g,g'}(s,t)=\theta_{log}(s\exp(V(x,g)),t\exp(V(x,g'))\ 
\]
where $\theta_{log}$  is the logarithmic interpolation function:
\[\theta_{log}(a,b):=\frac{a-b}{\log(a)-\log(b)}=\int_{0}^{1}a^{r}b^{1-r}dr.\ 
\]
For this choice of mobility, the dynamic cost of transporting mass from $(x,g)$  into $(x,g')$  depends on the value of the potential $V$  at these points, as well as on the value of the mass that is currently located at them. In particular, it is more expensive to move mass between these points when the amount of mass at one of them is close to zero. This mobility function satisfies  $(A1)-(A5)$ in Assumptions \eqref{assump:theta}. In this case, equations \eqref{eqn:GradFlowE} take the form
\begin{align*}\begin{aligned}\partial_{t}f_t(x,g)=\Delta_{x}f_{t}(x,g) & +\divv_{x}(f_{t}(x,g)\nabla_{x}V(x,g))\\
 & +\sum_{g^{\prime}\in\mathcal{G}}\big[f_{t}(x,g)\exp(V(x,g))-f_t(x,g^{\prime})\exp(V(x,g'))\big]K(g,g^{\prime}).
\end{aligned}
\end{align*}
which is a \textbf{linear} system of reaction diffusion equations. 
\end{example}


\subsection{Hamiltonian dynamics: formal computation of geodesic equations and accelerated methods for optimization}
\label{sec:Hamiltonian}

In this section we discuss how a formal Riemannian structure can be used to introduce accelerated methods for optimization of energies on $\mathcal{P}_2(\R^d \times\G)$. We first provide a characterization of the geodesic equations in the space $\mathcal{P}_2(\R^d \times \G)$, and then introduce a system of \textit{accelerated} dynamics for the minimization of the energy $\EE$ in \eqref{def:RelativeEntropy}. These two sets of equations are related to certain Hamiltonian systems in $\mathcal{P}_2(\R^d \times \G)$ which can be formally defined using a notion of acceleration of curves. Throughout this section we continue to work at a formal level.

\subsubsection{Geodesics}
To motivate the characterization of geodesics in $\mathcal{P}_2(\R^d \times \G)$, let us recall that when working on a smooth Riemannian manifold $\M$, the local equation satisfied by a geodesic $t \mapsto q(t) \in \M$ can be written as
\[  \begin{cases}  \dot{q}(t) = p(t) \\ \dot{p}(t) =0,   \end{cases}  \]
where $t \mapsto p(t)$ is understood as a vector field along the curve $t \mapsto q(t)$, and its derivative as the covariant derivative of $p$ along the curve $q$ (using the Levy-Civita connection) written $\nabla_{\dot{q}} p$. The second equation states that geodesics have zero \textit{acceleration}, i.e. $\nabla_{\dot{q}} \dot{q}=0$. This system can be understood as a Hamiltonian system on the tangent bundle $\T\M$ with Hamiltonian $\mathcal{H}(q,p):= \frac{1}{2}|p|^2_q$. 

Following the above intuition, in section \ref{sec:geodesic} we will formally derive for the formal Riemannian manifold $(\mathcal{P}_2(\R^d \times \G),W_2)$ the system of equations:
\begin{align}\label{geo_equation}\begin{cases}
\dot{\mu}_{t}+\text{div}_x(\nabla_x\varphi_t \mu_t)+\text{div}_{g}(\nabla_{g}\varphi_t\hat{\mu}_t)=0\\
\partial_{t}\varphi_t+\frac{1}{2}|\nabla_x\varphi_t|^{2}+\sum_{g^{\prime}}\big(\nabla_g \varphi_t \big)^{2}K(g,g')\partial_{1}\theta_{x,g,g'}(f_t(x,g),f_t(x,g'))=0,
\end{cases}\end{align}
characterizing geodesics in the space $(\mathcal{P}_2(\R^d \times \G), W_2) $; in the above $d\mu(x,g)=f(x,g) dxdg $, and we interpret $\partial_{1} \theta_{x,g,g'}(s,t)$ as the derivative in $s$ of the mobility function.  The first of the two equations, i.e. the continuity equation, simply states that the curve $t \mapsto \mu_t$ moves with velocity $(\nabla_x \varphi_t,\nabla_g \varphi_t)$. On the other hand, the left hand side of the second equation can be understood as the derivative of the velocity along the curve (i.e. the acceleration), and so by setting it to zero one matches the intuition coming from Riemannian geometry that was discussed earlier.

\subsubsection{ Second order dynamics \nc}

In order to introduce a system of second order dynamics for the optimization of an energy $\mathcal{E}$ like that in \eqref{def:RelativeEntropy}, we once again return to the setting of a smooth Riemmanian manifold $\M$ and consider the optimization of an objective function $q \in \M \mapsto E(q)$. The system
\[  \begin{cases}  \dot{q}(t) = p(t) \\ \dot{p}(t) = -\gamma p(t) - \nabla_\M E(q(t)),   \end{cases}  \]
can be interpreted as a continuous time accelerated method for the optimization of the objective $E$.  Here we abuse the use of the term \textit{accelerated} method slightly given the motivation coming from the Euclidean setting. \nc Indeed, in the case $\M= \R^d$ and when the parameter $\gamma$ is allowed to depend on time according to $\gamma=\gamma_t =3/t$, the above dynamics correspond to the continuous time analogue of the celebrated Nesterov accelerated method for optimization \cite{Nesterov}. For general $\M$, the above system may be interpreted again as a dynamical system on the tangent bundle $\T\M$, and can be understood as the flow map induced by a vector field  that is the addition of a Hamiltonian vector field on $\T \M$ with Hamiltonian $\mathcal{H}(q,p)= \frac{1}{2}|p|_q^2 + E(q)$ and a dissipative term that corresponds to the gradient of an energy $(q,p) \mapsto \frac{\gamma}{2}|p|_q^2 $ for a positive parameter $\gamma>0$.

Following the above intuition, we can introduce an accelerated method for the optimization of an objective on $\mathcal{P}_2(\R^d \times \G)$ such as the relative entropy $\mathcal{E}$. For this purpose we use the formal computation of the gradient of the relative entropy \eqref{formal_gradient} from subsection \ref{sec:FormalGradient} as well as the expression for the acceleration of curves in the formal Riemannian structure (which actually was already used when introducing the geodesic equation \eqref{geo_equation} and will be formally computed in section \ref{sec:geodesic}). We obtain the system:
\begin{align}\label{second_order}\begin{cases}
\dot{\mu}_{t}+\text{div}_x(\nabla_x\varphi_t \mu_t)+\text{div}_{g}(\nabla_{g}\varphi_t\hat{\mu}_t)=0\\
\partial_{t}\varphi_t+\frac{1}{2}|\nabla_x\varphi_t|^{2}+\sum_{g^{\prime}}\big(\nabla_g \varphi_t \big)^{2}K(g,g')\partial_{1}\theta_{x,g,g'}(f_t(x,g),f_t(x,g'))\\
\hspace{20em}= -[ \gamma \varphi_t(x,g) +\log f_t(x,g)+V(x,g)];
\end{cases}\end{align}
in the above, we interpret $d\mu(x,g)=f(x,g) dx dg $.

\red 

\begin{remark}

Notice that when the interpolation map $\theta $ is like the one in Example \ref{exam:MobilityMassIndep} the expression for the acceleration of a curve with velocity induced by the potentials $\varphi_t$ reads
\[ \partial_t \varphi_t + \frac{1}{2}|\nabla_x \varphi_t|^2.\]
\end{remark}

\nc

\subsection{Main theoretical result}\label{sec:main-results}

In the previous sections we have taken a formal Riemannian approach to make sense of the gradient descent ODE \eqref{eqn:GradFlow} when the energy $\mathcal{E}$ is the relative entropy defined in \eqref{def:RelativeEntropy}.  In this section we provide a more solid theoretical ground motivating equations \eqref{eqn:GradFlowE}. For that purpose we will define the gradient flow of $\mathcal{E}$  using the \textit{minimizing movement scheme} approach that we mentioned at the end of section \ref{sec:OTEuclidean}. To achieve this, we first introduce a family of \textit{static} transport costs that are used to define the iterations \eqref{eqn:JKO} (thinking of $\M=\mathcal{P}_{2}(\R^{d}\times\mathcal{G})$). Our main theoretical result, Theorem \ref{main-result} below, states that for a suitable static cost (see \eqref{sta_cost_def} below), and for a suitable choice of mobility $\theta$  (the one in Example \ref{exam:MobilityMassIndep}), the resulting minimizing movement scheme converges, as the time discretization parameter $\tau$  goes to zero, toward a solution of the equation formally derived in \eqref{reacd_{E}}. 


It is worth highlighting that the minimizing movement scheme that we consider here has the advantage of being defined in terms of a (static) transport cost that is closer to the Kantorovich formulation of the classical optimal transport problem (i.e. \eqref{def:Wass}), rather that in terms of the dynamic problem \eqref{def:SemiDiscreteOTdist}. First, the static formulation is computationally cheaper (e.g. using the entropic regularization methods from \cite{SinkhornAlgorithm} which can be used in our context). Additionally, for the static formulation we will be able to use techniques similar to those developed in \cite{F-G} to show 
that the resulting minimizing movement scheme satisfies a type of maximum principle characteristic of Fokker Planck equations.\\

To define our static transportation costs, we first introduce some notation. Given a measure $\mu \in \P_2(\R^d \times \G)$ we will consider the unique collection $\{\mu_{g}\}_{g\in\mathcal{G}}$   of positive measures over $\mathbb{R}^{d},$  such that 
\begin{equation}
\mu=\sum_{g\in\mathcal{G}}\mu_{g}\otimes\delta_{g}.
\label{eqn:Decomp}
\end{equation}
In the remainder we will often deal with absolutely continuous measures $d\mu(x,g)=f(x,g)\hspace{1mm}dxdg$ in $\P_2(\mathbb{R}^{d}\times\G)$, and by abuse of notation, in that case we will simply use the density $f$ to denote the measure $\mu.$  For example in the above decomposition, we will use the functions $f_{g}:\text{\ensuremath{\mathbb{R}^{d}\rightarrow\mathbb{R},}}$ (i.e. $f_g(x)=f(x,g)$) to denote the measures $\mu_{g}.$ 
We now introduce our static transportation problem which we remark is of interest in its own right.\\

  {\textbf{Static semi-discrete transportation problem.}} Let $\tau>0$ be a positive time step and let $W$  be as in Example \ref{exam:MobilityMassIndep}. For arbitrary measures $\mu,\sigma$  in $\mathcal{P}_{2}(\R^{d}\times\G)$  we define $ADM(\mu,\sigma)$  to be the set of pairs $(\gamma,h)$  (the admissible pairs) that satisfy:
\begin{enumerate}
\item $\gamma=\{\gamma_{g}\}_{g\in\G}$  where each $\gamma_{g}$  is a Borel positive measure on $\R^{d}\times\R^{d}$  and whose first marginal $\pi_{1\sharp}\gamma_{g}$  is equal to $\mu_{g}$ .
\item $h:\R^{d}\times\G\times\G\rightarrow\R$ is antisymmetric in $\G \times\G$ (i.e. for all $g,g' \in \G$, $x \in \R^d$ we have $h(x,g,g') =-h(x,g',g)$), and it belongs to 
\begin{equation}\label{L2Wgg}L_{W,K}^{2}(\mathbb{R}^{d}\times\mathcal{G}\times\mathcal{G}):=\bigg\{ h\in\mathbb{R}^{d}\times\mathcal{G}\times\mathcal{G}\rightarrow\mathbb{R}\hspace{1mm}:\hspace{1mm}\sum_{g,g^{\prime}}\int h_{gg^{\prime}}^{2}e^{-W}K(g,g^{\prime})dx<\infty \bigg\}.
\end{equation}
\item For every $g\in\G$ \begin{equation}\label{discrete_continuity}\sigma_{g}=\pi_{2\#}\gamma_{g}-\tau\sum_{g'}h_{gg^{\prime}}(x)K(g,g^{\prime})e^{-W(x)}.
\end{equation}
\end{enumerate}
The last term on the right hand side of the identity  \eqref{discrete_continuity} must be interpreted as the positive measure on $\R^{d}$  whose density (with respect to the Lebesgue measure) is given by
\[\tau\sum_{g'}h_{gg^{\prime}}(x)K(g,g^{\prime})e^{-W(x)}.\ 
\]
In the remainder we refer to the measures $\gamma_g$ as \textit{transport plans} and to the functions $h$ as \textit{mass exchange maps}. 

A \textit{static transportation cost} between $\mu,\sigma$ is defined by
\begin{equation}\label{sta_cost_def}\mathcal{A}^{\mathcal{G},W,\tau}(\mu,\sigma):=\inf_{(\gamma,h)\in ADM(\mu,\sigma)}C^{W,K}_{\tau}(\gamma,h),
\end{equation}
where 
\begin{equation}\label{pre_def}C_{\tau}^{W,K}(\gamma,h):=\sum_{g,g^{\prime}\in\,\mathcal{G}} \left(  \frac{1}{2\tau} \int_{\R^d}\int_{\R^d}|x-x^{\prime}|^{2}d\gamma_{g}+\frac{\tau}{4}\int_{\mathbb{R}^{d}}h_{gg^{\prime}}^{2}K(g,g^{\prime})e^{-W}\hspace{1mm}dx\right).
\end{equation}
Since the set $ADM(\mu,\sigma)$ may very well be the empty set, we follow the convention that the infimum of a quantity over an empty set is equal to $+\infty$. We use $Opt(\mu,\sigma)$  to denote the set of minimizers of \eqref{sta_cost_def} when $\mathcal{A}^{\mathcal{G},W,\tau}(\mu,\sigma)$  is finite.

The static semi-discrete optimal transport problem introduced above can be interpreted as an optimal two stage mass transport process from one distribution over $\R^d \times \G$ to another. In the first stage, mass is transported along each fiber of $\R^d$ (i.e. a set of the form $\R^d \times \{ g\}$). In the second stage, mass gets exchanged along every fiber of $\G$ (i.e. a set of the form $\{ x\} \times \G$). The optimal transport plans and optimal exchange maps (and implicitly the optimal intermediate mass distribution after stage 1), are chosen so as to minimize the sum of two terms: one that corresponds to aggregate quadratic cost in stage one, and the other that corresponds to an average of discrete $H^{-1}$ norms of the mass exchanged during stage two. In section \ref{sec:optimal_maps} we study the above semi-discrete (static) transport problem mathematically. In particular, we study properties of the set $ADM(\mu, \sigma)$ and characterize $Opt(\mu, \sigma)$ in a way that resembles Brenier's theorem for optimal transport in Euclidean space. Part of the motivation for the definition of this static problem comes from the theoretical desire of recovering the system \eqref{eqn:GradFlowE} as limit of a JKO scheme relative to some meaningful cost function. While this transport problem is not the same as the dynamic one from Definition \ref{def:SemiDiscreteOTdist}, we believe that they are actually closely related. This is a topic that we may explore in future work. \nc
\\

Let us now return to our aim of defining the gradient descent of the relative entropy energy $\mathcal{E}$ using the minimizing movement scheme. We use the cost function $2\tau \mathcal{A}^{\G,W,\tau}$ introduced above to produce the series of iterates in \eqref{eqn:JKO} for $\M=\mathcal{P}_{2}(\R^{d}\times\G)$ and $E=\mathcal{E}$. We will assume that the initial datum $\mu_{0}\in\mathcal{P}_{2}(\mathbb{\mathbb{R}}^{d}\times\mathcal{G})$ 
satisfies $\mathcal{E}(\mu_{0})<\infty.$   Moreover, we will impose a further technical condition and assume that $\mu_{0}$ has a probability density $f_{0}$ such that
\begin{equation}\label{Z_03}\lambda e^{-V}\leq f_{0}\leq\Lambda e^{-V},
\end{equation}
for some positive constants $\lambda$ and $\Lambda.$ Setting $\mu_0^\tau := \mu_0$, we will then let $\mu_{n+1}^{\tau}$ be  a \nc minimizer of 
\begin{equation}\label{def:OurJKO}\sigma \in \mathcal{P}_2(\R^d \times \G) \longmapsto\mathcal{E}(\sigma)+\mathcal{A}^{\G,W,\tau}(\mu,\sigma),
\end{equation}
where we set $\mu=\mu_{n}^{\tau}$ . In section \ref{sec:Prelim} we study properties of the minimization problem \eqref{def:OurJKO}, and in particular provide conditions under which minimizers exist (see Proposition \ref{A_step_EL}). It will then be straightforward to see that the resulting iterates must be absolutely continuous with respect to the measure $dxdg$, and thus can be written as $d\mu_{n}^\tau(x,g)= f_{n}^\tau(x,g) dxdg$. A continuous-time extension of the above iterates is defined via piecewise constant interpolation in time. Namely, 
\[f^\tau(t):=f_{n+1}^{\tau}, \quad t \in (n\tau,(n+1)\tau].\]

Comparing the minimization problems \eqref{def:JKO2:lagrangian} and \eqref{def:OurJKO}, we see our semi-discrete transportation cost plays the role of the kinetic energy in the Lagrangian formulation of the JKO scheme.\\

\color{black}

Our main theoretical result is the following:
\begin{theorem} \label{main-result} Suppose that $f_0$ satisfies \eqref{Z_03}, $W$ satisfies the conditions from Example \eqref{exam:MobilityMassIndep} and in addition for some constants $\lambda', \Lambda'$
\begin{equation} 
\lambda' e^{-W(x)} \leq e^{-V(x,g)} \leq  \Lambda' e^{-W(x)}. 
\label{eqn:WandV}
\end{equation}
where $V:\R^d \times \G \rightarrow \R$ is a differentiable function in $x$ that also satisfies
\[\sum_g \int_{\R^d} |\nabla_x V(x,g)|^2 e^{-V(x,g)}dx <\infty.\]

Then, for any sequence $\tau_{k}\downarrow0$  there exists a subsequence, not relabeled, for which $f^{\tau_{k}}$ converges to $f$   in $L^{2}(0;t_{F},L_{loc}^{2}(\mathbb{\mathbb{R}}^{d}\times\mathcal{G})))$ for any $t_F>0,$ where the map $t\in [0,\infty) \rightarrow f(t)$ belongs to $L_{loc}^{2}([0,\infty),W^{1,2}(\mathbb{\mathbb{R}}^{d}\times\mathcal{G})))$  and is a weak solution of \eqref{reacd_{E}} (see Definition \ref{def:WeakSol}).

Moreover, for every $t>0$
\begin{equation}\label{Z3}\lambda e^{-V(x,g)}\leq f(t,x,g)\leq\Lambda e^{-V(x,g)},
\end{equation}
for almost every  $(x,g)$ in $\mathbb{\mathbb{R}}^{d}\times\mathcal{G},$ where $\lambda, \Lambda$ are the constants in \eqref{Z_03}. 
\end{theorem}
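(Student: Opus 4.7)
I would follow the classical JKO strategy adapted to the semi-discrete static cost $\mathcal{A}^{\G,W,\tau}$: first derive a one-step Euler--Lagrange characterization for the minimizers of \eqref{def:OurJKO}, then extract $\tau$-independent a priori bounds strong enough to support Aubin--Lions compactness, and finally pass to the limit to identify the limit curve as a weak solution of \eqref{reacd_{E}} satisfying \eqref{Z3}. Existence of each iterate $f^\tau_{n+1}$, the Brenier-type structure of the optimal pair $(\gamma^{n+1}, h^{n+1}) \in Opt(f^\tau_n, f^\tau_{n+1})$, and the discrete maximum principle propagating $\lambda e^{-V} \leq f^\tau_n \leq \Lambda e^{-V}$ from \eqref{Z_03} are to be imported from Section \ref{sec:optimal_maps} and Proposition \ref{A_step_EL}, so I concentrate on the continuous-time passage.

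\textbf{Euler--Lagrange relations and a priori estimates.} By the semi-discrete Brenier-type theorem of Section \ref{sec:optimal_maps}, the optimal pair is governed by a single Kantorovich potential $\varphi^{n+1}$: on each fiber, $\gamma^{n+1}_g$ is the plan induced by a map of the form $T^{n+1}_g(x) = x - \tau \nabla_x \varphi^{n+1}(x,g)$, and the exchange map is $h^{n+1}_{g,g'}$ proportional to $\nabla_g \varphi^{n+1}(x,g,g')$. Perturbing $\sigma$ in \eqref{def:OurJKO} by smooth compactly supported vector fields on $\R^d$ and antisymmetric test fields on $\G$ and matching first variations identifies $\varphi^{n+1}$ with $-(\log f^\tau_{n+1} + V)$ up to additive constants, which is precisely the gradient formula \eqref{formal_gradient}. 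Comparing the minimum with the feasible competitor $\sigma = f^\tau_n$ (for which the admissible pair $(\gamma_g = (\mathrm{id},\mathrm{id})_\sharp f^\tau_{n,g}, h=0)$ has zero cost) yields the telescoping inequality
\[ \sum_{n=0}^{N-1} \mathcal{A}^{\G,W,\tau}(f^\tau_n, f^\tau_{n+1}) \leq \mathcal{E}(f^\tau_0) - \inf \mathcal{E} < \infty. \]
Combined with the uniform $L^\infty$ bounds from the maximum principle (and the resulting uniform positive lower bound on $f^\tau$ via \eqref{eqn:WandV}), the Euler--Lagrange identification converts this into a uniform $L^2(0, t_F; W^{1,2}(\R^d \times \G))$ bound for $f^\tau$ through $\nabla_x \log f^\tau = \nabla_x f^\tau / f^\tau$, together with a bound on the discrete gradient $\nabla_g \log f^\tau$ weighted by $K\,e^{-W}$.

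\textbf{Compactness and identification of the PDE.} Combining the spatial $W^{1,2}$ estimate with a uniform $H^{-s}$-in-time bound on $\partial_t f^\tau$ read off from the admissibility constraint \eqref{discrete_continuity}, an Aubin--Lions argument extracts a subsequence $f^{\tau_k}$ converging in $L^2(0, t_F; L^2_{loc}(\R^d \times \G))$ and almost everywhere to a limit $f$; the a.e.\ convergence together with the uniform two-sided bound give \eqref{Z3} for $f$. To identify the PDE, I test the one-step Euler--Lagrange equation against a smooth $\zeta$, Taylor-expand $\zeta(T^{n+1}_g(x),g) - \zeta(x,g)$ to second order in $\tau$ using the Brenier map, use the discrete integration-by-parts \eqref{eqn:DiscreteIntegrationParts}, and assemble the result across time steps to obtain
\[ \int_0^{t_F}\!\!\int \zeta\, \frac{f^\tau(t) - f^\tau(t-\tau)}{\tau}\, dxdg\, dt = -\int_0^{t_F}\!\!\int \nabla_x \zeta \cdot (\nabla_x f^\tau + f^\tau \nabla_x V)\, dxdg\, dt - \sum_{g,g'}\int_0^{t_F}\!\!\int \nabla_g \zeta\, [\log f^\tau + V]_{g,g'}\, K(g,g')\, e^{-W}\, dx\, dt + o_\tau(1). \]
Passing to the limit $\tau_k \to 0$ and invoking strong $L^2_{loc}$ convergence of $f^{\tau_k}$ yields the weak form of \eqref{reacd_{E}}.

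\textbf{Main obstacle.} The most delicate step is the passage to the limit in the reaction term involving $\log f^\tau(\cdot, g) - \log f^\tau(\cdot, g')$: the nonlinearity of $\log$ forbids using only weak convergence, and this is precisely where the uniform two-sided bounds from the maximum principle are indispensable, since on a compact interval bounded away from zero the function $\log$ is Lipschitz and strong $L^2_{loc}$ convergence transfers directly. A secondary technical point is making the remainder $o_\tau(1)$ in the displayed identity rigorous: one must expand $\zeta \circ T^{n+1}_g$ to second order in $\tau$, control the second-order term by $\|\zeta\|_{C^2}$ times $\sum_n \int |T^{n+1}_g - \mathrm{id}|^2 df^\tau_n = 2\tau \sum_n \mathcal{A}^{\G,W,\tau}(f^\tau_n, f^\tau_{n+1})$, and then use the telescoped energy bound to see that this sum vanishes with $\tau$. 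The lower bound in \eqref{Z3} then simply survives the a.e.\ convergence.
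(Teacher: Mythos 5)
Your proposal is correct and follows essentially the same route as the paper: Euler--Lagrange identification of the optimal map and exchange function with $-(\log f_\tau+V)$ (Proposition \ref{A_step_EL}), the telescoped energy/transport bound, Aubin--Lions compactness (the paper phrases the time regularity as a translate estimate in $W^{-2,p}(B_R)$ rather than a bound on $\partial_t f^\tau$, but this is the same mechanism), and passage to the limit in the reaction term using the two-sided maximum-principle bounds to make $\log$ Lipschitz on the relevant range. The only minor refinement in the paper is that \eqref{Z3} is obtained for \emph{every} $t$ (not just a.e.\ $t$) by upgrading to convergence in $C(0,t_F;W^{-2,p}_{loc})$ and taking weak $L^2$ limits at each fixed time.
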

We prove Theorem \ref{main-result} in section \ref{JKO_proof_sec}.

\begin{remark}
The function
\[
f_{\infty}(x,g)=ce^{-V(x,g)},
\]
with $c$ chosen so that
\[
\sum_{g\in\mathcal{G}}\int cf_{\infty}(x,g)\hspace{1mm} dx = 1,
\]
is an equilibrium point and solves equation \eqref{reacd_{E}}.
Consequently, the property described in \eqref{Z3} coincides with a well known maximum principle for the Fokker-Planck equation.
\color{black}
\end{remark}

\color{black}

\section{Metric and geometric properties of $W_2$}\label{ap:heuristic}

\subsection{Proof of Theorem \ref{thm:Dist}}
\label{ap:TheoremDistance}

\medskip

\textbf{1.} Let $\mu_0, \mu_1$ be two elements in $\mathcal{P}_2(\R^d \times \G)$. First we prove that the infimum in the definition of $W^2_2(\mu_0, \mu_1)$ is finite by exhibiting one solution to the continuity equation connecting $\mu_0$ and $\mu_1$ with finite kinetic energy. One such solution is described as follows.

Let us first assume that $\mu_0$ and $\mu_1$ are supported on the set $B(0,R)\times \G$ for some $R>0$. For each $g \in \G$ let $m_g :=\mu_0( \R^d \times \{ g\}) $ be the total mass assigned to the fiber $\R^d \times \{ g \}$ by $\mu_0$ and let $\mu_{0g}$, $\mu_{1g}$ be the positive measures over $\R^d$ defined by
\[ \mu_{0g}(A) := \mu_0(A \times \{ g\}) , \quad  \mu_{1g}(A) := \mu_1(A \times \{ g\}) \quad \forall A \subseteq \R^d , \text{ Borel}. \]
Also, let $\tilde{\mu}_1$ be the first marginal of the measure $\mu_1$, i.e.
\[ \tilde{\mu}_1(A) = \mu_1(A \times \G), \quad \forall A \subseteq \R^d , \text{ Borel}.  \]
Since the measures $\mu_{0g}$ and $m_g \tilde{\mu}_1 $ have the same amount of total mass, we can find a solution  $t\in[0,1] \mapsto ( \nu_{t,g}, \nabla_x \phi_t( \cdot,g))$ to the continuity equation on $\R^d$
\[ \dot{ \nu}_{t,g} + \divv_x(  \nabla_x \phi_t( \cdot,g) \nu_{t,g} )=0, \]
satisfying $ \nu_{0,g}= \mu_{0g} $, $ \nu_{1,g}= m_g\tilde{\mu}_{1}$, and
\[ \int_{0}^1 \int_{\R^d} |\nabla_x \phi_t(x,g)|^2 d\nu_{t,g}(x) dt < \infty.    \]

On the other hand, notice that for every $g \in \G$ the measure $\mu_{1g}$ is absolutely continuous with respect to $\tilde{\mu}_1$, and for $\tilde{\mu}_1$-a.e. $x$ we have
\[ \sum_{g} \frac{d\mu_{1g}}{d\tilde{\mu}_1}(x)=1.   \]
For each such $x$ we can find a solution to the discrete continuity equation $t \in [0,1] \mapsto (\gamma_{t,x}, \nabla_g \psi_t(x, \cdot))$ 
\[ \dot{\gamma}_{t,x} + \divv_g(  \nabla_g \psi_t(x,\cdot) \cdot \hat{\gamma}_{t,x} )=0 \]
satisfying $\gamma_{0,x}(g)= m_g$ and $\gamma_{1,x}(g)= \frac{d\mu_{1g}}{d\tilde{\mu}_1}(x)$ for all $g \in \G$, and satisfying 
\[  \int_{0}^1 \sum_{g,g'} |\nabla_g \psi_t(x,g,g')|^2 K(g,g')d \hat{\gamma}_{t,x}(g,g')dt \leq C,  \]
for some constant $C$ that only depends on $R$. Such solution exists due to assumptions (A0) or (A5) on $\theta$ and the fact that discrete optimal transport is well defined in that case (see \cite{Maas,Erbar2012}).

We define 
\[  \mu_t := \begin{cases} \sum_{g \in \G}  m_g  d {\nu}_{2t,g}(x) \otimes \delta_g   , \quad t \in [0,1/2] \\ \sum_{g \in \G}  \gamma_{(2t-1),x}(g)  d\tilde{\mu}_1(x) \otimes \delta_g  , \quad t \in [1/2,1] \end{cases}\]
and 
\[ \phi_t(x,g):=  \begin{cases} \phi_{2t}(x,g) , \quad t \in [0,1/2] \\ 0 , \quad t \in [1/2,1] \end{cases}  \quad \quad \psi_t(x,g):=  \begin{cases} 0 , \quad t \in [0,1/2] \\ \psi_{2t-1}(g), \quad t \in [1/2,1] \end{cases}  \]
It is straightforward to verify that $t\in[0,1] \mapsto (\mu_t,\nabla_x \phi_t, \nabla_g \psi_t )$ solves the semi-discrete continuity equation, connects $\mu_0$ and $\mu_1$, and has finite kinetic energy.

If $\mu_0, \mu_1$ are not compactly supported as assumed above, then pick \textit{any} $\tilde{\mu}_0, \tilde \mu_1$ compactly supported satisfying 
\[ \mu_0(\R^d \times \{g \}) = \tilde \mu_0(\R^d \times \{g \}) , \quad \mu_1(\R^d \times \{g \})= \tilde \mu_1(\R^d \times \{g \}), \quad \forall g \in \G.\]
One can then dynamically transport mass from $\mu_0$ to $\tilde{\mu}_0$ restricting the transport to each fiber $\R^d \times \{ g\}$ using a continuity equation with finite kinetic energy on each fiber (this is simply OT in $\R^d$). Then, one can transport dynamically from $\tilde{\mu}_0$ to $\tilde{\mu}_1$ (as done above) and finally transport dynamically from $\tilde{\mu}_1$ to ${\mu}_1$ restricting the transport to each fiber $\R^d \times \{ g\}$ (again doing OT just on $\R^d$).

\textbf{2.} Let us now show that $W_2(\mu_0, \mu_1)=0 $ if and only if $\mu_0 = \mu_1$.
First notice that if $\mu_0= \mu_1$ we may take $\phi_t \equiv 0$, $\psi_t\equiv 0$ and $\mu_t = \mu_0$ for all $t \in [0,1]$. Then, it is clear that $t \in [0,1] \rightarrow (\mu_t, \phi_t)$ solves the continuity equation, has zero kinetic energy, and connects $\mu_0$ and $\mu_1$, from where it follows that $W_2^2(\mu_0, \mu_1)=0$. 

Now let us suppose that $W_2^2(\mu_0, \mu_1) =0 $. We want to show that $\mu_0=\mu_1$. Fix an arbitrary test function $\zeta: \R^d \times \G \rightarrow \R$ where $\zeta( \cdot,g)$ is smooth and compactly supported for all $g \in \G$. From the condition $W_2(\mu_0, \mu_1)=0$ we see that for every $\veps>0$ there is a solution to the continuity equation $t \mapsto (\mu_t, \nabla_x \phi_t, \nabla_g \psi_t)$ connecting $\mu_0$ and $\mu_1$ with kinetic energy less than $\veps$, i.e., 
\begin{align*}
\mathcal{K}:=\int_{0}^{1} \left( \int_{\R^d}\sum_{g} |\nabla_x \phi_t(x,g)|^2d\mu_{t}(g,x) +\int_{\R^{d}}\sum_{g,g^{\prime}}\nabla_g\psi _{t}(x,g,g^{\prime}){}^{2}K(g,g^{\prime})d\hat{\mu}_{t}(x,g,g^{\prime}) \right) dt \leq \veps.
\end{align*}
Using \eqref{def:SemiDiscreteContEq_{0}} (after integration over $t \in [0,1]$) for the above test function $\zeta$, we conclude that 
\[ \left | \sum_{g \in \G} \int_{\R^d}\zeta(x,g)d\mu_1(x,g) -  \sum_{g \in \G} \int_{\R^d}\zeta(x,g)d\mu_0(x,g)  \right | \leq C_\zeta \sqrt{\mathcal{K}} \leq C_\zeta \sqrt{\veps} \]
where $C_\zeta$ is a constant that only depends on the test function $\zeta$. Given that $\veps$ was arbitrary we can conclude that
\[  \sum_{g \in \G} \int_{\R^d}\zeta(x,g)d\mu_1(x,g) =  \sum_{g \in \G} \int_{\R^d}\zeta(x,g)d\mu_0(x,g). \]
Finally, since $\zeta$ was an arbitrary smooth compactly supported test function we deduce that $\mu_0 = \mu_1$.

\textbf{3.} Next, we show that $W_2(\mu_0, \mu_1) = W_2(\mu_1, \mu_0)$. To see this, simply notice that any solution $t \in [0,1] \mapsto (\mu_t, \nabla_x\phi_t,\nabla_g \psi_t) $ to the continuity equation starting at $\mu_0$ and ending at $\mu_1$, can be reverted in time $t \in [0,1] \rightarrow (\mu_{1-t}, -\nabla_x\phi_{1-t}, -\nabla_g\psi_{1-t})$ producing in this way a solution to the continuity equation that starts at $\mu_1$ and ends at $\mu_0$, and has the exact same kinetic energy as the original curve.

\textbf{4.} Lastly we prove the triangle inequality. First we observe that after a standard reparametrization (of time) by arc-length it follows that for every $\mu, \tilde \mu \in \mathcal{P}_2(\R^d \times \G)$ and every $T>0$,
\begin{align}
\begin{split}
 W_2(\mu,\tilde \mu) & =\inf_{t \in[0,T] \mapsto (\mu_{t},\nabla_x \phi_t,\nabla_g \psi_t)}  \int_{0}^{T}  \biggr(  \int_{\R^d} \sum_{g\in\mathcal{G}}  | \nabla_x \phi_t (x,g)|^{2}d\mu_{t}(x,g)
\\ & +\int_{\R^{d}}\sum_{g,g^{\prime}} (\nabla_g \psi_t(x,g,g^{\prime})){}^{2}K(g,g^{\prime})d\hat{\mu}_{t}(x,g,g^{\prime}) \biggr)^{1/2} dt, 
\end{split}
\label{eqn:Wdef}
\end{align}
where the inf ranges over all solutions $t\in [0,T] \mapsto (\mu_t, \nabla_x \phi_t, \nabla_g \psi_t )$ to the semi-discrete continuity equation with $\mu_0=\mu$ and $\mu_T = \tilde \mu$

Let now $\mu_0, \mu_1, \mu_2$ be arbitrary elements in $\mathcal{P}_2(\R^d \times \G)$. From \eqref{eqn:Wdef}, for any $\veps>0$ we may consider $t \in [0,1] \mapsto (\mu_t,\nabla_x \phi_t, \nabla_g \psi_t)$ and $t \in [0,1] \mapsto (\widetilde \mu_t, \nabla_x \widetilde \phi_t, \nabla_g \widetilde \psi_t)$ solutions to the semi-discrete continuity equation satisfying $\mu_0= \mu_0$, $\mu_1= \mu_1 = \widetilde \mu_0$, $\widetilde \mu_1= \mu_2$ and
\begin{align*}
\int_{0}^{1} \left( \frac{1}{2}\sum_{g}\int_{\R^d} |\nabla_x \phi_t(x,g)|^2d\mu_{t}(x,g) +\int_{\R^{d}}\sum_{g,g^{\prime}}\nabla_g\psi _{t}(x,g,g^{\prime})^{2}K(g,g^{\prime})d\hat{\mu}_{t}(x,g,g^{\prime}) \right)^{1/2} dt  \\\leq W_2(\mu_0, \mu_1) + \veps,
\end{align*}
\begin{align*}
\int_{0}^{1} \left( \frac{1}{2}\sum_{g}\int_{\R^d} |\nabla_x \widetilde \phi_t(x,g)|^2d\widetilde \mu_{t}(x,g) +\int_{\R^{d}}\sum_{g,g^{\prime}}\nabla_g \widetilde \psi _{t}(x,g,g^{\prime})^{2}K(g,g^{\prime})d\hat{\widetilde \mu}_{t}(x,g,g^{\prime}) \right)^{1/2} dt  \\\leq W_2(\mu_1, \mu_2) + \veps.
\end{align*}
We then consider
\[  \gamma_t := \begin{cases} \mu_{t}, \quad t \in [0,1] \\ \widetilde \mu_{t-1}  , \quad t \in [1,2] \end{cases}\]
and the potentials
\[ \alpha_t(x,g):= \begin{cases} \phi_{t}(x,g) , \quad t \in [0,1] \\ \widetilde \phi_{t -1}(x,g) , \quad t \in [1,2] \end{cases}  \quad \quad \beta_t(x,g):=  \begin{cases} \psi_{t}(x,g) , \quad t \in [0,1] \\ \widetilde \psi_{t-1}(x,g) , \quad t \in [1,2]. \end{cases}  \]
It follows that $t \in [0,2] \mapsto (\gamma_t,\nabla_x \alpha_t, \nabla_g \beta_t )$ solves the semi-discrete continuity equation, connects $\mu_0$ and $\mu_2$, and satisfies
\begin{align*}
\int_{0}^{2} \left( \frac{1}{2}\sum_{g}\int_{\R^d} |\nabla_x \widetilde \phi_t(x,g)|^2d\widetilde \mu_{t}(x,g) +\int_{\R^{d}}\sum_{g,g^{\prime}}\nabla_g \widetilde \psi _{t}(x,g,g^{\prime})^{2}K(g,g^{\prime})d\hat{\widetilde \mu}_{t}(x,g,g^{\prime}) \right)^{1/2} dt  
\\\leq W_2(\mu_0, \mu_1) + W_1(\mu_1,\mu_2)  + 2\veps.
\end{align*}
From \eqref{eqn:Wdef} it follows that $W_2(\mu_0,\mu_2) \leq W_2(\mu_0, \mu_1)+ W_2(\mu_1, \mu_2) + 2\veps$. Since $\veps>0$ was arbitrary the result now follows.



\nc

\subsection{Tangent plane characterization}
\label{sec:tangentplanes}

In this section we provide concrete conditions under which the statement of Theorem \ref{TangentPlanes} can be made rigorous. The bottom line is that the arguments presented in this section motivate the formal characterization for the tangent plane $\T_\mu \mathcal{P}_2(\R^d \times \G)$, i.e. infinitesimal curves on $\mathcal{P}_2(\G \times \R^d)$ passing through $\mu$. The main result of this section can be interpreted as a minimal selection principle for the potentials $(\phi, \psi)$ driving a given solution to the continuity equation. Some of the results proved below will be used again later on when we get to analyze the static semi-discrete transport problem from section \ref{sec:main-results}.

Throughout this section we work with measures of the form $d\mu(x,g) = f(x,g) dx dg$ for a density function $f$ satisfying basic boundedness conditions. We also use the following spaces of potentials:
\begin{equation}\label{kernel} \Phi:=  \Bigg\{ \veps \in L^2_c(\R^d \times \G) \text{ s.t. } \int_{\R^d} \veps(x,g) dx=0 \quad \forall g, \quad \sum_g \veps(x,g)=0 \text{ a.e. } x \in \R^d  \Bigg\}, \end{equation}
where $L^2_c(\R^d \times \G)$ stands for the space of $L^2(\R^d \times \G)$ functions with compact support (i.e. almost everywhere equal to zero outside a set of the form $B(0,R) \times \G$), and also
\begin{equation*}\label{kernel2} \Phi^\perp:= \Bigg\{ \varphi \in L^2_{loc}(\R^d \times \G) \text{ s.t. } \int_{\R^d} \sum_g \varphi(x,g) \veps(x,g) dx =0, \quad \forall \veps \in \Phi    \Bigg\}. \end{equation*}


\begin{lemma}\label{prop_tangent} Let $f:\R^d \times \G \rightarrow \R $ be a probability density such that in every compact subset of $\R^d \times \G$ is bounded and bounded away from zero. Let $\phi, \psi$ be two potentials belonging to $L^{2}_{loc}(\R^d \times \G)$ for which 
\[\int_{\R^d}\sum_{g}|\nabla_x  \phi(x,g)|^2f(x,g) dx  +  \int_{\R^d}\sum_{g,g'}|\nabla_g  \psi(x,g,g')|^2 K(g,g') \hat{f}(x,g,g')dx< \infty.\] 
Consider the minimization problem:
\begin{equation}
 \inf_{\tilde \phi , \tilde \psi \in L^{2}_{loc}(\R^d \times \G)} \int_{\R^d}\sum_{g}|\nabla_x \tilde \phi(x,g)|^2f(x,g) dx  + \int_{\R^d}\sum_{g,g'}|\nabla_g \tilde \psi(x,g,g')|^2 K(g,g') \hat{f}(x,g,g') dx
 \label{eqn:LemSelecPrinciple}
\end{equation}
subject to
\[ \divv_x(f \nabla_x \tilde \phi) +  \divv_g(\hat f \nabla_g \tilde \psi) = \divv_x( f \nabla_x \phi) + \divv_g( \hat f \nabla_g \psi), \]
where the equality must be interpreted in the sense of distributions. 

Then, there exists a minimizing pair $\tilde \phi, \tilde \psi$ for the above problem. In addition, any minimizing pair must satisfy $\tilde \phi - \tilde \psi \in \Phi^\perp$.

\label{prop:tangentplanes}	
\end{lemma}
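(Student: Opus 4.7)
The plan divides naturally into two parts: establishing existence of a minimizer via the direct method, and then extracting the structural identity $\tilde\phi-\tilde\psi\in\Phi^{\perp}$ from a first variation / Lagrange multiplier argument.

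\textbf{Existence.} I would work in the weighted Hilbert space of pairs of ``vector fields'' $(u,v)$ with $u:\R^{d}\times\G\to\R^{d}$ and $v:\R^{d}\times\G\times\G\to\R$ endowed with the squared norm
\[
\|(u,v)\|^{2}:=\int_{\R^{d}}\sum_{g}|u|^{2}f\,dx+\int_{\R^{d}}\sum_{g,g'}|v|^{2}K\hat{f}\,dx.
\]
The admissible set for \eqref{eqn:LemSelecPrinciple} is the affine subspace of such pairs of the form $(\nabla_{x}\tilde\phi,\nabla_{g}\tilde\psi)$ satisfying the constraint; it is nonempty (it contains $(\nabla_x\phi,\nabla_g\psi)$) and closed in the weak topology, since the constraint is a single distributional identity and both $f$ and $\hat f$ are bounded above and below on compacts. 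A minimizing sequence is norm-bounded; extract a weakly convergent subsequence and pass to the limit in the constraint and in the functional using weak lower semicontinuity. Potentials $\tilde\phi,\tilde\psi\in L^{2}_{\text{loc}}$ are then recovered from their gradients using connectedness of $(\G,K)$ in the $g$-direction and standard Poincar\'e-type estimates in the $x$-direction, exploiting the boundedness and non-degeneracy of $f$ on compact sets.

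\textbf{First variation.} At a minimizer, the stationarity condition reads: for every smooth compactly supported pair $(\delta\phi,\delta\psi)$ with $\divv_{x}(f\nabla_{x}\delta\phi)+\divv_{g}(\hat f\nabla_{g}\delta\psi)=0$ in $\mathcal{D}'$,
\[
\int\!\sum_{g}\nabla_{x}\tilde\phi\cdot\nabla_{x}\delta\phi\,f\,dx+\int\!\sum_{g,g'}\nabla_{g}\tilde\psi\cdot\nabla_{g}\delta\psi\,K\hat{f}\,dx=0.
\]
To exploit this, I would set up the dual Lagrange-multiplier problem. The constraint is $A(\tilde\phi,\tilde\psi)=V_{0}$ where $A(\tilde\phi,\tilde\psi):=\divv_{x}(f\nabla_{x}\tilde\phi)+\divv_{g}(\hat f\nabla_{g}\tilde\psi)$ and $V_{0}:=A(\phi,\psi)$. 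The key observation is that Proposition~\ref{lemm:GraphPDE} (solvability of the elliptic graph PDE at each $x$) combined with the pointwise bounds on $f$ gives closed range for $A$, so a Lagrange multiplier $\lambda\in L^{2}_{\text{loc}}(\R^{d}\times\G)$ exists with
\[
\nabla_{x}\lambda=\nabla_{x}\tilde\phi\quad\text{and}\quad\nabla_{g}\lambda=\nabla_{g}\tilde\psi\qquad\text{a.e.}
\]
Integrating these identities, $\tilde\phi(x,g)=\lambda(x,g)+c_{1}(g)$ for some $c_{1}$ depending only on $g$, while $\tilde\psi(x,g)=\lambda(x,g)+c_{2}(x)$ for some $c_{2}$ depending only on $x$. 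Subtracting yields the structural identity
\[
\tilde\phi(x,g)-\tilde\psi(x,g)=c_{1}(g)-c_{2}(x)\qquad\text{a.e.}
\]

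\textbf{Conclusion.} For any $\veps\in\Phi$, Fubini and the defining properties of $\Phi$ give
\[
\int_{\R^{d}}\sum_{g}(\tilde\phi-\tilde\psi)\,\veps\,dx=\sum_{g}c_{1}(g)\int_{\R^{d}}\veps(x,g)\,dx-\int_{\R^{d}}c_{2}(x)\sum_{g}\veps(x,g)\,dx=0,
\]
since the first factor in the first sum vanishes for each $g$ and the second integrand vanishes for a.e.\ $x$. Hence $\tilde\phi-\tilde\psi\in\Phi^{\perp}$.

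\textbf{Main obstacle.} The delicate step is the production of the Lagrange multiplier $\lambda$, equivalently, showing that the range of the constraint operator $A$ is closed and that its cokernel is trivial on the relevant quotient space. This is where Proposition~\ref{lemm:GraphPDE} and the hypothesis that $f$ is bounded and bounded away from zero on compacts are essential: pointwise in $x$ they give solvability and a uniform energy bound for the graph PDE in the $g$-variable, and when combined with the Euclidean elliptic theory in the $x$-variable they yield the orthogonal decomposition $L^{2}_{\text{loc}}(\R^{d}\times\G)=\mathrm{range}(A^{*})\oplus\mathrm{ker}(A)$ needed to conclude. If the cleanest Lagrange-multiplier derivation proves technically cumbersome, the same structural identity can be obtained by the purely variational route of testing with two explicit families of admissible perturbations---one of the form $\delta\phi=\eta$ with $\delta\psi$ reconstructed pointwise-in-$x$ from Proposition~\ref{lemm:GraphPDE}, and its symmetric counterpart---and then reading off that $\nabla_{x}(\tilde\phi-\tilde\psi)$ depends only on $g$ and $\nabla_{g}(\tilde\phi-\tilde\psi)$ depends only on $x$.
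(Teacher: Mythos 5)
Your proposal is correct in outline, but both halves are packaged differently from the paper, and your primary route for the second half leans on a step you yourself flag as unproven. For existence, you minimize directly over pairs of gradients and must therefore argue that this set is weakly closed in the weighted $L^2$ space; the paper sidesteps exactly this in the $g$-direction by first relaxing $\nabla_g\tilde\psi$ to an arbitrary antisymmetric $h$ (for which the direct method is immediate) and then projecting back onto discrete gradients by solving $\divv_g(\nabla_g\tilde\psi_x\hat f_x)=\divv_g(h_x\hat f_x)$ via Proposition~\ref{lemm:GraphPDE}, with Cauchy--Schwarz showing the projection does not increase the energy. For the orthogonality claim, your Lagrange-multiplier argument ($\nabla_x\tilde\phi=\nabla_x\lambda$, $\nabla_g\tilde\psi=\nabla_g\lambda$, hence $\tilde\phi-\tilde\psi=c_1(g)-c_2(x)$) proves something strictly stronger than the lemma — it is essentially the single-potential representation that the paper only obtains later by combining this lemma with Lemma~\ref{kernel_perp} — but it hinges on closed range of the constraint operator in an $L^2_{\mathrm{loc}}$ setting, which is the genuinely delicate point and is not carried out. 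The paper instead does precisely what you describe as your fallback: for each $\veps\in\Phi$ it solves $\divv_x(f\nabla_x\eta)=\veps$ (elliptic theory in $x$) and $\divv_g(\hat f\nabla_g\beta)=-\veps$ (Proposition~\ref{lemm:GraphPDE}, using that $\veps(x,\cdot)$ has zero mean), perturbs $(\tilde\phi,\tilde\psi)\mapsto(\tilde\phi+s\eta,\tilde\psi+s\beta)$ — admissible because the two contributions to the constraint cancel — and differentiates at $s=0$ to read off $\int\sum_g\veps\,(\tilde\phi-\tilde\psi)\,dx=0$ directly, with no need for any pointwise structural identity. So: your existence argument buys nothing over the paper's and carries an extra closedness burden; your multiplier argument buys a stronger conclusion but at the cost of the unverified closed-range step; your fallback is the paper's proof and is the route you should make precise.
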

	\begin{proof}
 
\textbf{1.} Let us start by proving the existence of minimizers. First we consider the slightly modified problem
\begin{equation}
 \inf_{\tilde \phi , h } \int_{\R^d}\sum_{g}|\nabla_x \tilde \phi(x,g)|^2f(x,g) dx  +  \int_{\R^d}\sum_{g,g'}|h_{gg'}(x)|^2 K(g,g') \hat{f}(x,g,g') dx
 \label{eqn:LemSelecPrincipleAux1}
\end{equation}
subject to
\[ \divv_x(f \nabla_x \tilde \phi) +  \divv_g(\hat f \cdot h) = \divv_x( f \nabla_x \phi) + \divv_g( \hat f \nabla_g \psi), \]
where the minimization is now over pairs $(\tilde \phi, h)$ for $\tilde \phi$ as in problem \eqref{eqn:LemSelecPrinciple} and $h \in L^2_{loc}(\R^d \times \G \times \G) $ an antisymmetric function on $\G \times \G$ (i.e. $h_{gg'}(x)= -h_{g'g}(x)$ for every $x,g,g'$). Existence of solutions to \eqref{eqn:LemSelecPrincipleAux1} follows immediately from the direct method of the calculus of variations. 
From a solution $(\tilde \phi, h)$ to problem \eqref{eqn:LemSelecPrincipleAux1} we now construct a solution to \eqref{eqn:LemSelecPrinciple}. Fix $x \in \R^d$. Thanks to Proposition \ref{lemm:GraphPDE} there exists a solution $\tilde \psi(x, \cdot)=\tilde \psi_x$ to the graph PDE 
\[ \divv_g( \nabla_g \tilde \psi_x \hat{f}_x)= \divv_g(h_x \hat{f}_x ),  \]
which satisfies $\sum_{g} \tilde \psi_x(g)=0$. Following the proof of Proposition \ref{lemm:GraphPDE} and using the fact that $\sum_g \tilde \psi_x(g)=0$ we can conclude that there exists a constant $C_x>0$ for which
\begin{equation}
   \sum_{g} |\tilde \psi(x,g)|^2\leq C_x \sum_{g,g'} |\nabla_g \tilde \psi(x,g,g')|^2 K(g,g')\hat{f}(x,g,g').  
   \label{eqn:LemSelecPrincipleAux2}
\end{equation}
The constant $C_x$ can be assumed to be uniform on compact subsets of $\R^d$ thanks to the assumptions on $\theta $ and the fact that in each compact subset of $\R^d \times \G$ the function $f$ is assumed to be bounded and bounded away from zero. Using \eqref{eqn:DiscreteIntegrationParts} we obtain
\begin{align*}
 \sum_{gg'} |\nabla_g \tilde \psi_x|^2 K(g,g')\hat{f}_x(g,g') &= -\sum_{g} \divv_g(\nabla_g \tilde \psi_x \hat{f}_x) \tilde \psi_x  = -\sum_{g} \divv_g(h_x \hat{f}_x) \tilde \psi_x 
 \\&= \sum_{gg'} \nabla_g \tilde \psi_x \cdot h_x  K(g,g')\hat{f}_x(g,g')    ,
\end{align*}
and thus, from Cauchy-Schwartz inequality 
\[  \sum_{gg'} |\nabla_g \tilde \psi_x(g,g')|^2 K(g,g')\hat{f}_x(g,g')  \leq   \sum_{gg'} |h_x(g,g')|^2 K(g,g')\hat{f}_x(g,g').  \]
The above implies that $(\tilde{\phi}, \nabla_g \tilde\psi)$ is also a solution to \eqref{eqn:LemSelecPrincipleAux1}. Given that $\tilde \psi$ is in $L^2_{loc}$ thanks to \eqref{eqn:LemSelecPrincipleAux2}, we deduce that $(\tilde \phi, \tilde \psi)$ is a minimizing pair for \eqref{eqn:LemSelecPrinciple}.

\textbf{2.} Let $(\tilde \phi , \tilde \psi)$ be an arbitrary minimizing pair. Let $\veps$ be an arbitrary element in $\Phi$ and pick a specific measurable representative for it (which we also denote by $\veps$). For each fixed $g$ consider the PDE (in $x$) 
\begin{equation}
  \veps(\cdot,g)  = \divv_x( f(\cdot, g) \nabla_x  \eta(\cdot, g) ).
\label{eqn:LemPDE}  
\end{equation}
Existence of a solution $\eta(\cdot, g)$ in $L^2_{loc}(\R^d)$ follows from standard arguments in the theory of elliptic PDEs, given that $\veps$ has compact support and that in each compact subset of $\R^d \times \G$ $f$ is bounded and bounded away from zero. Also, let $x$ be a Lebesgue point for all the functions $\veps(\cdot, g)$, and consider the graph PDE 
\begin{equation}
  -\veps(x,\cdot) = \divv_g( \hat{f}_x \cdot \nabla_g \beta(x,\cdot)    ) .   
\label{eqn:LemGraphPDE}  
\end{equation}
This equation has a unique solution (that we denote by $\beta(x, \cdot)$) that averages to zero according to Lemma \ref{lemm:GraphPDE} (given that $\veps(x,\cdot)$ has average zero). Moreover, the function $\beta$ can be seen to be in $L^2_{loc}$ using the inequalities from Proposition \ref{lemm:GraphPDE}.

Now, for each $s\in \R$ consider the perturbed potentials:
\[ \phi_s(x,g):= \tilde \phi(x,g) + s \eta(x,g), \]
\[ \psi_s(x,g):= \tilde \psi(x,g) + s \beta(x,g),\]
and notice that
        \[ \divv_x(  f \nabla_x \phi_s ) = \divv_x(  f  \nabla _x \tilde \phi ) + s \divv_x(  f \nabla_x \eta ) = \divv_x(  f  \nabla _x\tilde \phi ) +  s\veps     \]
		\[ \divv_g(  \hat{f} \cdot \nabla _g \psi_s ) = \divv_g(  \hat{f} \cdot \nabla _g \tilde \psi ) + s \divv_g(  \hat{f} \cdot \nabla_g \beta ) = \divv_g(  \hat{f} \cdot \nabla _g \tilde \psi) -s\veps,   \]
	    so that in particular, for every $s \in \R$, 
	   the pair $(\phi_s, \psi_s)$
	    is admissible in the minimization of \eqref{eqn:LemSelecPrinciple}. Let $\mathcal{K}: \R \rightarrow \R$ be the function
		\[  \mathcal{K}(s) := \sum_g \int_{\R^d}  | \nabla _x \phi_s |^2 f(x,g) dx  + \sum_{g,g'} \int_{\R^d} ( \psi_s(x,g) - \psi_s(x, g') )^2 K(g,g')\hat{f}(x,g,g')dx,   \]
		which is minimized at $s=0$ by definition of $(\tilde \phi, \tilde \psi)$. Computing $\frac{d}{ds} \mathcal{K} (s)$ and evaluating at $s=0$, we deduce that
        \begin{align*}
        \begin{split}
         0 &= \sum_g \int_{\R^d} \nabla_x \eta(x,g) \cdot \nabla_x \tilde \phi(x,g) f(x,g) dx 
         \\&+   \int_{\R^d} \sum_{g,g'} (\beta(x,g)- \beta(x, g') )( \tilde \psi(x,g) - \tilde \psi(x,g')   ) K(g,g')\hat{f}(x,g,g') dx
         \\ & = \sum_g \int_{\R^d} \veps(x,g)\tilde \phi(x,g)dx - \int_{\R^d} \sum_g \veps(x,g) \tilde \psi(x,g)  dx
         \\&= \sum_g \int_{\R^d} \veps(x,g)(\tilde \phi(x,g)- \tilde \psi(x,g))dx
         \end{split}
         \end{align*}
		where the second equality follows from the fact that $\eta(\cdot, g)$ solves \eqref{eqn:LemPDE} and $\beta(x, \cdot)$ solves \eqref{eqn:LemGraphPDE}. Since $\veps\in \Phi$ was arbitrary, it follows that $\tilde \phi - \tilde \psi$ belongs to $\Phi^\perp$ as we wanted to show.

	\end{proof}


	\nc


\begin{lemma}\label{kernel_perp}
For any $\varphi$ in $\Phi^{\perp}$ there exists $\varphi_{1}:\mathbb{R}^{d}\rightarrow\mathbb{R}$ in $L^2_{loc}(\R^d)$ and $\varphi_{2}:\G\rightarrow\mathbb{R}$ such that
\[
\varphi(x,g)=\varphi_{1}(x)+\varphi_{2}(g), \quad \forall g \in \G, \quad \text{ a.e. }x  \in \R^d.
\]
Conversely, if $\varphi$ admits the above decomposition then $\varphi \in \Phi^\perp$.
\end{lemma}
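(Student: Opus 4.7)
The plan is to handle the two directions separately: the converse is an immediate split-of-integrals, while the forward direction reduces to the classical fact that an $L^2_{loc}$ function orthogonal to every mean-zero compactly supported test function is a.e.\ constant.

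For the converse, given the decomposition $\varphi(x,g)=\varphi_1(x)+\varphi_2(g)$ and any $\veps \in \Phi$, I would split
\[
\int_{\R^d}\sum_g \veps(x,g)\varphi(x,g)\,dx = \int_{\R^d}\varphi_1(x)\Bigl(\sum_g\veps(x,g)\Bigr) dx + \sum_g \varphi_2(g)\int_{\R^d}\veps(x,g)\,dx;
\]
both summands vanish directly from the two defining conditions on $\Phi$.

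For the forward direction, my strategy is to probe $\varphi \in \Phi^\perp$ with ``dipole'' test functions supported on just two fibers. Concretely, for any distinct nodes $g_1, g_2 \in \G$ and any mean-zero $\alpha \in L^2_c(\R^d)$, setting $\veps(\cdot,g_1) = \alpha$, $\veps(\cdot,g_2) = -\alpha$, and $\veps(\cdot,g) = 0$ for all other $g$ yields a test function in $\Phi$ (one easily checks both balance conditions). The orthogonality relation then reads
\[
\int_{\R^d}\bigl(\varphi(x,g_1) - \varphi(x,g_2)\bigr)\alpha(x)\,dx = 0 \quad \text{for every mean-zero } \alpha \in L^2_c(\R^d),
\]
from which I would deduce that $\varphi(\cdot, g_1) - \varphi(\cdot, g_2)$ is equal to a constant $c_{g_1,g_2}$ almost everywhere. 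The standard way to extract this constant is local: on each ball $B$, pick a non-negative $\chi \in C_c^\infty(B)$ with $\int \chi = 1$ and observe that $h - (\int h)\chi$ is mean-zero and compactly supported for every $h \in C_c^\infty(B)$, forcing $\varphi(\cdot, g_1) - \varphi(\cdot, g_2)$ to coincide with $\int(\varphi(\cdot,g_1) - \varphi(\cdot,g_2))\chi\,dx$ a.e.\ on $B$; nested balls then pin down a single global constant.

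Fixing now a reference node $g_0 \in \G$, the natural choice is $\varphi_1(x) := \varphi(x, g_0)$, which lies in $L^2_{loc}(\R^d)$ since $\G$ is finite and $\varphi \in L^2_{loc}(\R^d \times \G)$, and $\varphi_2(g) := c_{g, g_0}$ (with $\varphi_2(g_0)=0$). For each $g$ the identity $\varphi(x,g) = \varphi_1(x) + \varphi_2(g)$ holds outside a null set in $x$, and since $\G$ is finite the union of these null sets remains null, giving the decomposition globally. The main obstacle is really only notational: choosing dipole test functions satisfying both defining constraints of $\Phi$ simultaneously; no analytic subtlety arises beyond the classical ``orthogonal to mean-zero implies constant'' lemma.
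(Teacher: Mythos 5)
Your proof is correct and follows essentially the same strategy as the paper's: probe $\varphi$ with test functions in $\Phi$ that isolate the difference of two fibers, conclude that $\varphi(\cdot,g)-\varphi(\cdot,g')$ is a.e.\ constant, and assemble the decomposition from a reference node (the converse via Fubini is identical). The only difference is technical: the paper uses the specific four-point test functions $\xi^r_{x,g}-\xi^r_{x,g'}-\xi^r_{x_0,g}+\xi^r_{x_0,g'}$ and a Lebesgue-point limit $r\to 0$, whereas you use general mean-zero dipoles supported on two fibers together with the classical ``orthogonal to all mean-zero test functions implies constant'' lemma; both routes are valid.
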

\begin{proof}
Let $\varphi\in \Phi^{\perp}$ and fix a Lebesgue point $x_0$ for all the functions $\varphi(\cdot, \tilde g)$. Let
\[
\varphi_{2}(\tilde g):=\varphi(x_{0},\tilde g), \quad \tilde g \in \G.
\]
Observe that from Fubini's theorem any function that is independent of $x$ belongs to $\Phi^{\perp},$ and thus, $\varphi_{2}$ must be contained in $\Phi^{\perp}.$ Define now the function
\[
\varphi_{1}(\tilde x,\tilde g ):=\varphi(\tilde x,\tilde g)-\varphi_{2}(\tilde g).
\]
To complete our proof we must show that $\varphi_{1}$ does not depend on $\tilde g.$ For this purpose, let $x$ be an arbitrary Lebesgue point for all the functions $\varphi(\cdot, \tilde g)$. Fix $g,g' \in\mathcal{G}.$ Let $r>0$ and consider the test function 
\begin{equation}
\veps_r:=\xi^r_{x,g}-\xi^r_{x,g^{\prime}}-\xi^r_{x_{0},g}+\xi^r_{x_{0},g^{\prime}},
\label{eqn:AuxTestFunctionsEE}
\end{equation}
where $\xi^r_{x,g}: \R^d \times \G \rightarrow \R $ is given by
\[ \xi^r_{x,g}(\tilde{x}, \tilde{g}):= \frac{1}{|B(x,r)|} \mathds{1}_{B(x,r)}(\tilde x)\mathds{1}_{\{\tilde g=g\}}. \]
Notice that by construction $\veps_r$ is contained in $\Phi.$ Also, since $\varphi$ and $\varphi_{2}$ are contained in $\Phi^{\perp},$ $\varphi_{1}$ is contained in $\Phi^{\perp}$ too. Hence,
\begin{align*}
0&=\sum_{g}\int_{\mathbb{R}^d}\veps_r(\tilde x,\tilde g)\varphi_{1}(\tilde x,\tilde g) d\tilde x
\\&= \frac{1}{|B(x,r)|}\int_{B(x,r)}\varphi_1(\tilde x,g)d \tilde x - \frac{1}{|B(x,r)|}\int_{B(x,r)}\varphi_1(\tilde x,g')d \tilde x 
\\&- \frac{1}{|B(x_0,r)|}\int_{B(x_0,r)}\varphi_1(\tilde x,g)d \tilde x + \frac{1}{|B(x_0,r)|}\int_{B(x_0,r)}\varphi_1(\tilde x,g')d \tilde x.
\end{align*}
We may now take $r \rightarrow 0$ and use the fact that $x_0$ and $x$ were assumed to be Lebesgue points for the functions $\varphi(\cdot, g)$ and $\varphi(\cdot, g')$ (thus also for $\varphi_1$) to conclude that
\[0 = \varphi_1(x,g)- \varphi_1(x,g')-\varphi_1(x_0,g)+\varphi_1(x_0,g'). \]
By construction $\varphi_{1}(x_{0},g)=\varphi_1(x_0,g')=0$. Consequently, we deduce that
\[
\varphi_{1}(x,g)=\varphi_{1}(x,g^{\prime}).
\]
Since $x,$ $g$ and $g^{\prime}$ were arbitrary, we conclude that $\varphi$ can be written as the sum of a function of $x$ only and a function of $g$ only.

The converse statement is a direct consequence of Fubini's theorem.

\end{proof}

\begin{remark}
\label{rem:EEPerp}
Notice that from the proof of Lemma \ref{kernel_perp} it actually follows that if $\varphi \in L^2_{loc}(\R^d \times \G) $ is such that $\sum_g \int_{\R^d} \varphi(x,g) \veps(x,g) dx $ for all $\veps$ of the form \eqref{eqn:AuxTestFunctionsEE} then $\varphi$ can be written as $\varphi(x,g) = \varphi_1(x) + \varphi_2(g) $  (and in particular it follows that $\varphi \in \Phi^\perp$). We will use this observation in Proposition \ref{opt_couplings}. 
\end{remark}

We may now combine the previous two lemmas to deduce the following minimum selection principle providing concrete support to Theorem \ref{TangentPlanes}.

\begin{proposition}
Under the same assumptions on $f$ from Lemma \ref{prop_tangent}, there exists a minimizing pair for problem \eqref{eqn:LemSelecPrinciple} of the form $(\varphi, \varphi)$.
\end{proposition}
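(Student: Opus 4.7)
The plan is to chain the previous two lemmas and then make a one-line algebraic adjustment. First, I would apply Lemma \ref{prop_tangent} to produce some minimizing pair $(\tilde{\phi}, \tilde{\psi})$ for problem \eqref{eqn:LemSelecPrinciple}; the same lemma guarantees that the difference lies in the annihilator, $\tilde{\phi} - \tilde{\psi} \in \Phi^{\perp}$. Then Lemma \ref{kernel_perp} lets me write
\[
\tilde{\phi}(x,g) - \tilde{\psi}(x,g) = \varphi_{1}(x) + \varphi_{2}(g)
\]
for some $\varphi_{1} \in L^{2}_{loc}(\R^{d})$ and some $\varphi_{2} : \G \to \R$.

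The key observation is that the $x$-part and the $g$-part of the difference can be transferred independently between the two potentials without altering either the constraint or the objective, because $\varphi_{1}$ kills the discrete gradient and $\varphi_{2}$ kills the continuous gradient. Concretely, I would set
\[
\varphi(x,g) := \tilde{\phi}(x,g) - \varphi_{2}(g) = \tilde{\psi}(x,g) + \varphi_{1}(x),
\]
where the two expressions agree by the decomposition above. Because $\varphi_{2}$ does not depend on $x$, we have $\nabla_{x} \varphi = \nabla_{x} \tilde{\phi}$, and because $\varphi_{1}$ does not depend on $g$, we have $\nabla_{g} \varphi = \nabla_{g} \tilde{\psi}$. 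These two identities simultaneously imply that $(\varphi, \varphi)$ satisfies the same divergence constraint
\[
\divv_{x}(f \nabla_{x} \varphi) + \divv_{g}(\hat{f} \nabla_{g} \varphi) = \divv_{x}(f \nabla_{x} \tilde{\phi}) + \divv_{g}(\hat{f} \nabla_{g} \tilde{\psi}),
\]
and attains exactly the same value of the objective in \eqref{eqn:LemSelecPrinciple}. So $(\varphi, \varphi)$ is itself a minimizer.

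There is essentially no obstacle beyond a brief sanity check that $\varphi$ has the required regularity, namely $\varphi \in L^{2}_{loc}(\R^{d} \times \G)$; this is immediate since $\tilde{\psi}$ is in $L^{2}_{loc}(\R^{d} \times \G)$ and $\varphi_{1}$ is in $L^{2}_{loc}(\R^{d})$ by Lemma \ref{kernel_perp}. The whole argument is really a minimum-selection principle: Lemma \ref{prop_tangent} provides the minimizer and pins down the ambiguity to $\Phi^{\perp}$, and Lemma \ref{kernel_perp} makes that ambiguity explicit enough to be absorbed so that a single potential $\varphi$ suffices.
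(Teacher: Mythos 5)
Your proposal is correct and follows essentially the same route as the paper: invoke Lemma \ref{prop_tangent} for existence of a minimizer with $\tilde\phi - \tilde\psi \in \Phi^\perp$, use Lemma \ref{kernel_perp} to split the difference as $\varphi_1(x) + \varphi_2(g)$, and define $\varphi = \tilde\phi - \varphi_2 = \tilde\psi + \varphi_1$ so that $\nabla_x\varphi = \nabla_x\tilde\phi$ and $\nabla_g\varphi = \nabla_g\tilde\psi$, preserving both the constraint and the objective value. No gaps.
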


\begin{proof}
Consider an arbitrary minimizing pair for problem \eqref{eqn:LemSelecPrinciple}.
By Lemma \ref{prop_tangent} we know that this pair must satisfy $\tilde \phi - \tilde \psi \in \Phi^\perp$, and by Lemma \ref{kernel_perp} we can conclude that 
\[ \tilde \phi - \tilde \psi = \varphi_1 + \varphi_2,    \]
for some $\varphi_1:\R^d \rightarrow \R  $ in $L^2_{loc}(\R^d \times \G)$ and $\varphi_2: \G \rightarrow \R$. Consider now the function
\[ \varphi(x,g):= \tilde \phi(x,g) -\varphi_2(g)  \]
and notice that we can also write it as
\[ \varphi(x,g)= \tilde \psi(x,g)+ \varphi_1(x).  \]
It follows that
\[  \nabla_x \varphi = \nabla_x \tilde \phi, \quad \nabla_g \varphi = \nabla_g \tilde \psi.  \]
Due to the above relationship it follows that $(\varphi, \varphi)$ is admissible for the optimization problem \eqref{eqn:LemSelecPrinciple} and that it achieves the same value as that of the minimizing pair $(\tilde \phi, \tilde \psi)$. Therefore, $(\varphi, \varphi)$ solves \eqref{eqn:LemSelecPrinciple}.

\end{proof}






\nc



\subsection{A formal computation of the acceleration of a curve in $\mathcal{P}_2(\R^d \times \G)$: geodesic equations and accelerated methods for optimization}
\label{sec:geodesic}
In this section, we present a heuristic argument that motivates the discussion in section \ref{sec:Hamiltonian}. The heuristics are based on the formal computation of the acceleration of a given curve in $\mathcal{P}_2(\R^d \times \G)$.

Let us recall that the covariant derivative $\nabla_{\dot{q}(t)}$ along a smooth curve $t \mapsto q(t)$ on a smooth Rimennian manifold $\M$ is a mapping taking vector fields  into vector fields along the curve $q$. This mapping makes sense of the idea of differentiation of a vector field $t \mapsto p(t)$ along the curve in a way that is compatible with the Riemannian structure of $\M$. We will now recall a formula from Riemannian geometry that characterizes $\nabla_{\dot q} \dot q$ (the covariant derivative of the velocity of the curve, i.e. the acceleration of the curve) in terms of variations of the kinetic energy. For that purpose we let $t \in [0,T] \mapsto q(t)$ be a fixed smooth curve in $\M$. We recall that a (smooth) \textit{proper variation} of the curve $q$ is a smooth function $\alpha: (s,t) \in (-\veps, \veps) \times [0,T] \rightarrow \M$  satisfying $\alpha(0,t) = q(t)$ for all $t \in [0,T]$ and $\alpha(s,0)=q(0)$, $\alpha(s,T) = q(T)$ for all $s \in (-\veps, \veps)$. In particular, the maps $t \in [0,T] \mapsto \alpha(s,t) $ can be understood as describing nearby curves to the original curve $q$, and in that light, the vector field $v(t) =\frac{\partial }{\partial s} \alpha(0,t)$  known as the \textit{variational field} of $\alpha$ (which is a vector field along the curve $q$) describes an infinitesimal deformation of the curve maintaining its endpoints anchored. A well known result in Riemannian geometry (e.g.  Proposition 2.4 in Chapter 9 in \cite{docarmo1992riemannian}) states that:
\begin{equation}
\frac{d}{ds} \biggr \lvert_{s=0} \left( \frac{1}{2} \int_{0}^T  \left  \lvert  \frac{\partial}{\partial t}\alpha(s,t)    \right \rvert_{q(t)}^2dt \right)  = - \int_{0}^T \left \langle v(t) , \nabla_{\dot{q}} \dot{q} \right \rangle_{q(t)} dt.
\label{eqn:FirstVariationM}
\end{equation}
Since in the above one can take arbitrary variations of $q$, the previous expression indeed characterizes $\nabla_{\dot{q}} \dot {q}$ completely: regardless of the smooth proper variation taken,  the first variation of the kinetic energy (the left hand side) must match the right hand side which is expressed in terms of the corresponding variational field and the acceleration of the curve $\nabla_{\dot{q}} \dot{q}$.

Using the above discussion as motivation, let us now consider a curve $t \in [0,T] \mapsto f_{t}\in \mathcal{P}_2(\R^d \times \G)$ and let us provide a formal definition for its acceleration; here and in what follows we identify a measure $d\mu(x,g)= f(x,g) dxdg$ with its density,  and let $(\nabla_x \varphi_t, \nabla_g \varphi_t) $ be the velocity of the curve at time $t$. \nc Let $(s,t)\in (-\veps, \veps ) \times [0,T] \mapsto (f_{s,t}, \nabla_x \varphi_{s,t} , \nabla_g \varphi_{s,t})$ be a proper variation of $t\mapsto f_t$. Namely, we assume $(f_{0,t}, \varphi_{0,t}) = (f_t, \varphi_t)$ for all $t$, and $f_{s,0}= f_0$, $f_{s,T}= f_T$ for all $s\in(-\veps, \veps)$. We use $\psi_{s,t}$ to denote a potential associated to the curve $s \in (-\veps, \veps) \mapsto f_{s,t}$. The map $t \in [0,T] \mapsto \psi_t := \psi_{0,t}$ can then be interpreted as the corresponding variational field of the varition $(s,t) \mapsto f_{s,t}$. We assume all functions are smooth, and smooth in $s$ and $t$ so that we can take derivatives in $x,s,t$ at will.

Relative to the proper variation introduced above we define 
\[ F(s) :=  \frac{1}{2} \int_{0}^T \left(  \sum_{g} \int_{\R^d} |\nabla_x \varphi_{s,t}|^2 f_{s,t}(x,g)dx + \int_{0}^T  \sum_{g,g'} \int_{\R^d} |\nabla_g \varphi_{s,t}|^2 \hat{f}_{s,t}(x,g,g')dx \right) dt , \]
for  $s \in(-\veps , \veps)$, which according to \eqref{def:InnerProduct} can also be written as 
\[ \frac{1}{2} \int_{0}^T \langle  \varphi_{s,t}  , \varphi_{s,t} \rangle_{f_{s,t}} dt.    \]
We show that
\begin{equation}
    \frac{d}{ds} F(s)\biggr \rvert_{s=0} =   - \int_{0}^T  \left \langle \psi_t , \partial_t \phi_t + \frac{1}{2}|\nabla_x \varphi_t|^2 +  \sum_{g'} |\nabla_g\varphi_t(\cdot, \cdot, g')|^2 \partial_1\theta( f_{t}(\cdot, \cdot), f_{t}(\cdot, g'))  \right \rangle_{f_t} dt,
    \label{eqn:FirstVariationMeasures}
\end{equation}
which when compared to \eqref{eqn:FirstVariationM} motivates the definition of the acceleration of the curve $t \in [0,T] \mapsto (f_t, \nabla_x \varphi_t, \nabla_g \varphi_t)$ at time $t$ as the potential:
\[(x,g) \in \R^d \times \G \mapsto  \partial _t \varphi_t(x,g) + \frac{1}{2}|\nabla_x \varphi(x,g)|^2 +  \sum_{g'} |\nabla_g\varphi_t(x,g , g')|^2 \partial_1\theta( f_{t}(x, g), f_{t}(x, g')) .\]
Notice that in turn, the above definition motivates the geodesic equations given in \eqref{geo_equation}, as well as the (continuous time) accelerated scheme in \eqref{second_order} for the optimization of the relative entropy defined in \eqref{def:RelativeEntropy} (using the expression for its gradient that we found in section \eqref{sec:FormalGradient}) in light of the discussion in section \ref{sec:Hamiltonian}.

We now formally obtain \eqref{eqn:FirstVariationMeasures}. First,
\begin{align}
\label{eqn:AuxAcc0}
\begin{aligned}
\frac{d}{ds} F(s) &= \int_{0}^T \sum_{g}\int_{\R^d} (\nabla_x \partial_s\varphi_{s,t} \cdot \nabla_x \varphi_{s,t})  f_{s,t}(x,g)dt
\\&+ \int_{0}^T \sum_{g,g'}\int_{\R^d} (\nabla_g \partial_s\varphi_{s,t} \cdot \nabla_g \varphi_{s,t})K(g,g')\hat{f}_{s,t}(x,g,g')dt 
\\&+ \frac{1}{2}\int_{0}^T \sum_{g}\int_{\R^d} |\nabla_x \varphi_{s,t}|^2  \partial_s f_{s,t}(x,g)dt  + \frac{1}{2}\int_{0}^T \sum_{g,g'}\int_{\R^d} |\nabla_g \varphi_{s,t}|^2 K(g,g')\partial_s \hat{f}_{s,t}(x,g,g')dt 
\\&= \int_{0}^T \sum_{g}\int_{\R^d} (\nabla_x \partial_s\varphi_{s,t} \cdot \nabla_x \varphi_{s,t})  f_{s,t}(x,g)dt  
\\&+ \int_{0}^T \sum_{g,g'}\int_{\R^d} (\nabla_g \partial_s\varphi_{s,t} \cdot \nabla_g \varphi_{s,t})K(g,g')\hat{f}_{s,t}(x,g,g')dt 
\\&+ \frac{1}{2}\int_{0}^T \sum_{g}\int_{\R^d} |\nabla_x \varphi_{s,t}|^2  \partial_s f_{s,t}(x,g)dt  
\\&+ \int_{0}^T \sum_{g,g'}\int_{\R^d} |\nabla_g \varphi_{s,t}|^2  K(g,g') \partial_1\theta( f_{s,t}(x,g), f_{s,t}(x,g') ) \partial_sf_{s,t}(x,g) dt.
\end{aligned}
\end{align}
On the other hand, integration by parts and the fact that $\partial_s \varphi(0,s)=0$ and $\partial_s \varphi(s,T)=0$ for all $s$ (because the variation is proper) lead to
\begin{align*}
\int_{0}^T  \partial_t \varphi_{s,t}(x,g) \partial_s f_{s,t}(x,g) dt &= -\int_{0}^T   \varphi_{s,t}(x,g) \partial_s \partial_t f_{s,t}(x,g) dt 
\\& = - \frac{d}{ds}\left( \int_{0}^T \varphi_{s,t} \partial_t f_{s,t} dt \right) + \int_{0}^T \partial _s \varphi_{s,t} \partial_t f_{s,t} dt.
\end{align*}
After integration over $x,g$ and using the continuity equation, the above implies
\begin{align}
\label{eqn:AuxAcc1}
\begin{split}
&\int_{0}^T \sum_g \int_{\R^d}  \partial_t \varphi_{s,t}(x,g) \partial_s f_{s,t}(x,g) dx  dt 
\\&=- \frac{d}{ds} \left( \int_{0}^T \left( \sum_g\int_{\R^d} |\nabla_x \varphi_{s,t}|^2 f_{s,t}dx  + \sum_{g,g'} \int_{\R^d } |\nabla_g \varphi_{s,t}|^2 \hat{f}_{s,t}dx   \right) dt   \right)
\\&+ \int_{0}^T  \sum_g \int_{\R^d} \partial_s \varphi_{s,t} \partial_t f_{s,t} dx dt
\\& =  - 2\frac{d}{ds}F(s) 
\\&+ \int_{0}^T \sum_{g}\int_{\R^d} (\nabla_x \partial_s\varphi_{s,t} \cdot \nabla_x \varphi_{s,t})  f_{s,t}(x,g)dt  + \int_{0}^T \sum_{g,g'}\int_{\R^d} (\nabla_g \partial_s\varphi_{s,t} \cdot \nabla_g \varphi_{s,t})\hat{f}_{s,t}(x,g,g')dt.
\end{split}
\end{align}
Combining \eqref{eqn:AuxAcc0} and \eqref{eqn:AuxAcc1} we deduce that $\frac{d}{ds}F(s) $ can be written as: 
\[ - \int_{0}^T \int_{\R^d}\sum_{g} \left(  \partial_{t} \varphi_{s,t} + \frac{1}{2}|\nabla_x \varphi_{s,t}|^2+ \sum_{g'} |\nabla_g\varphi_{s,t}|^2 K(g,g')\partial_1\theta(f_{s,t}(x,g),f_{s,t}(x,g')   \right) \partial_s  f_{s,t}(x,g)   dx dt.  \]
Finally, at $s=0$ we have $\partial_s f_{s,t}= - \divv_x(\nabla_x \psi_t  f_{t} ) - \divv_g(\nabla_g \psi_t  \hat{f}_{t} ), $ and thus \eqref{eqn:FirstVariationMeasures} follows combining the above with the semi-discrete continuity equation.  
\nc



\section{Properties of minimizing pairs of the static semi-discrete optimal transport problem}\label{sec:optimal_maps}

 In this section we study the minimizers of the static semi-discrete transportation problem that we introduced in section \ref{sec:main-results}. Some of the results presented in this section will be used in the sequel while others are of interest on their own. We seek to reproduce the result of Brenier 
\cite[Theorem 1.26]{A-G} that characterizes optimal transport maps in the Euclidean setting in terms of convex functions. Our characterization is presented in Proposition \ref{opt_couplings}. We begin by studying the existence of optimal pairs. 
%



\begin{lemma} \textbf{ (Existence of optimal pairs)}  \label{coup_exists} Let $\mu,\sigma \in \mathcal{P}_2(\R^d \times \G)$ and suppose that $W_2^{\G,W,\tau}(\mu, \sigma)<\infty$.  Then, the set  $Opt(\mu, \sigma)$  (i.e. the set of solutions to \eqref{sta_cost_def}) \nc is non-empty.  
\end{lemma}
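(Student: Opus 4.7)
The plan is to apply the direct method of the calculus of variations to the functional $C_\tau^{W,K}$ over the convex set $ADM(\mu,\sigma)$. Since $W_2^{\G,W,\tau}(\mu,\sigma)<\infty$, we may fix a minimizing sequence $(\gamma^n,h^n)\in ADM(\mu,\sigma)$ with $C_\tau^{W,K}(\gamma^n,h^n)\le W_2^{\G,W,\tau}(\mu,\sigma)+1/n$. The two ingredients are: (i) compactness for the sequences $\{\gamma^n_g\}$ and $\{h^n\}$ in suitable topologies, and (ii) joint lower semicontinuity of $C_\tau^{W,K}$ together with stability of the linear constraint \eqref{discrete_continuity} under the chosen modes of convergence.

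First I would establish compactness. For each $g\in\G$ the first marginal of $\gamma_g^n$ equals the fixed measure $\mu_g$, so the family $\{\pi_{1\sharp}\gamma_g^n\}_n$ is automatically tight; the bound $\sum_g\int |x-x'|^2 d\gamma_g^n \le 2\tau(C_\tau^{W,K}(\gamma^n,h^n))$ combined with $\mu\in\mathcal{P}_2(\R^d\times\G)$ yields uniformly bounded second moments of $\pi_{2\sharp}\gamma_g^n$, hence tightness of $\{\pi_{2\sharp}\gamma_g^n\}_n$ as well. By Prokhorov's theorem, passing to a subsequence (not relabeled), $\gamma_g^n\rightharpoonup \gamma_g$ narrowly, and the first marginal constraint $\pi_{1\sharp}\gamma_g=\mu_g$ is preserved in the limit. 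For the mass exchange maps, the bound $\sum_{g,g'}\int |h_{gg'}^n|^2 K(g,g')e^{-W}dx\le (4/\tau)C_\tau^{W,K}(\gamma^n,h^n)$ shows that $h^n$ is bounded in $L^2_{W,K}(\R^d\times\G\times\G)$, so up to a further subsequence $h^n\rightharpoonup h$ weakly in this Hilbert space; antisymmetry being a closed linear condition is preserved by weak limits.

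Next I would pass to the limit in the constraint \eqref{discrete_continuity}. For every test function $\varphi\in C_c(\R^d)$ and every $g\in\G$, narrow convergence yields $\int \varphi\, d\pi_{2\sharp}\gamma_g^n\to \int \varphi\, d\pi_{2\sharp}\gamma_g$, while weak $L^2_{W,K}$ convergence (noting that $\varphi\, K(g,g')$ is a bounded compactly supported function and hence lies in the dual space $L^2_{W,K}$) gives $\int \varphi\, h_{gg'}^n K(g,g')e^{-W}dx\to \int \varphi\, h_{gg'}K(g,g')e^{-W}dx$. Summing over $g'$ and using that the left-hand side of \eqref{discrete_continuity} is independent of $n$, we obtain in the limit
\[
\sigma_g=\pi_{2\sharp}\gamma_g-\tau\sum_{g'}h_{gg'}K(g,g')e^{-W},
\]
so $(\gamma,h)\in ADM(\mu,\sigma)$ (nonnegativity of $\pi_{2\sharp}\gamma_g-\tau\sum_{g'}h_{gg'}K(g,g')e^{-W}$ is inherited from the identity, since $\sigma_g\ge 0$).

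Finally, I would invoke joint lower semicontinuity of $C_\tau^{W,K}$: the quadratic transport cost $\gamma_g\mapsto\int|x-x'|^2 d\gamma_g$ is narrowly lower semicontinuous as the integral of a nonnegative lsc function, and $h\mapsto \sum_{g,g'}\int h_{gg'}^2 K(g,g')e^{-W}dx$ is the square of a Hilbert norm, hence weakly lsc. Adding these contributions gives
\[
C_\tau^{W,K}(\gamma,h)\le \liminf_{n\to\infty}C_\tau^{W,K}(\gamma^n,h^n)=W_2^{\G,W,\tau}(\mu,\sigma),
\]
so $(\gamma,h)\in Opt(\mu,\sigma)$. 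The only real obstacle is stability of \eqref{discrete_continuity} under the two different modes of convergence (narrow for $\gamma$ vs.\ weak $L^2$ for $h$); this is handled by testing against the same common class $C_c(\R^d)\otimes \{\mathds{1}_g\}_{g\in\G}$, which belongs to both dual spaces thanks to $e^{-W}\in L^1_{loc}$ and boundedness of $K$.
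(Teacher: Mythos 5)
Your proposal is correct and follows essentially the same route as the paper: a minimizing sequence, equiboundedness of second moments of $\gamma^n$ and of the $L^2_{W,K}$ norms of $h^n$, compactness (narrow for the plans, weak Hilbert-space for the exchange maps), passage to the limit in the linear constraint \eqref{discrete_continuity} by testing against compactly supported functions, and joint lower semicontinuity of the cost. The paper states this only as "a standard lower compactness/lower semicontinuity and weak convergence argument"; your write-up fills in the same details it implicitly relies on.
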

\begin{proof}


Let us consider a minimizing sequence of admissible pairs $\ensuremath{\{(\gamma_{n},h_{n})\}_{n=1}^{\infty}}$ and note that since $\mathcal{A}^{\mathcal{G},W,\tau}(\mu,\sigma)<\infty$  we have that, passing to a subsequence if necessary, we can assume that the second moments of $\{\gamma_{n}\}_{n=1}^{\infty},$  and the norm of $\{h_{n}\}_{n=1}^{\infty}$  in the weighted space $L_{W}^{2}(\mathbb{R}^{d}\times\mathcal{G\times\mathcal{G}})$ are equibounded (see \eqref{L2Wgg}). Consequently, since $L_{W}^{2}(\mathbb{R}^{d}\times\mathcal{G\times\mathcal{G}})$  is a Hilbert space, the existence of a minimizer follows by a standard lower compactness/lower semicontinuity and weak convergence argument (see \cite[Theorem 1.2]{A-G}). Indeed, since the constraint \eqref{discrete_continuity} is linear, we can pass it to the limit by weak convergence of $\gamma_{n}$  and $h_{n}$  in duality with smooth functions with compact support.
\end{proof}

Notice that if $\mu=\sigma$ then $W_{2}^{\G, W,\tau}=0 <\infty$.
The following lemma will not be used in the sequel, but provides other examples of $\mu$ and $\sigma$ for which one can prove that $W_2^{\G, W, \tau}(\mu, \sigma)<\infty$.

\begin{lemma}
Let $\mu,\sigma \in \mathcal{P}_2(\R^d \times \G)$ be absolutely continuous w.r.t. $dxdg$ and assume that $\sigma$'s density belongs to the space:
\begin{equation}\label{L2W}L_{W}^{2}(\mathbb{R}^{d}\times\mathcal{G}):=\bigg\{ f: \mathbb{R}^{d}\times\mathcal{G}\rightarrow\mathbb{R}\hspace{1mm}\text{s.t.}\hspace{1mm}\sum_{g\in\mathcal{G}}\int|f_{g}|^{2}e^{W}dx<\infty\bigg\}.
\end{equation}
Then, $W_2^{\G,W,\tau}(\mu,\sigma)<\infty$.
\end{lemma}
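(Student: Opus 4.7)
The plan is to construct an explicit admissible pair $(\gamma,h)\in ADM(\mu,\sigma)$ with finite cost via a two-stage decomposition: first a quadratic transport along $\R^d$ inside each fiber, then a mass exchange along $\G$. The delicate point is choosing the intermediate distribution so that the exchange equation admits an antisymmetric solution and both stages are cheap.

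First I set $m_g:=\mu_g(\R^d)$, let $\bar\sigma:=\sum_{g'}\sigma_{g'}$ be the $x$-marginal of $\sigma$, and define the intermediate measures $\nu_g:=m_g\bar\sigma$. Then $\nu_g(\R^d)=m_g$ and crucially $\sum_g\nu_g=\bar\sigma=\sum_g\sigma_g$, and $\nu_g$ has a finite second moment because $\sigma\in\mathcal P_2$. For Stage 1 take the product coupling $\gamma_g:=m_g^{-1}\,\mu_g\otimes\nu_g$ (with the convention $\gamma_g=0$ when $m_g=0$); then $\pi_{1\#}\gamma_g=\mu_g$ as required, and
\[
\int_{\R^d\times\R^d}|x-x'|^2\,d\gamma_g\;\leq\;2\int|x|^2\,d\mu_g+2\int|x'|^2\,d\nu_g,
\]
which is finite after summation over $g$ by the second-moment hypotheses on $\mu$ and $\sigma$.

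For Stage 2 I need an antisymmetric $h$ satisfying $\sum_{g'}h_{gg'}(x)K(g,g')e^{-W(x)}=\tau^{-1}(\nu_g(x)-\sigma_g(x))$ a.e. Setting $\tilde\rho_g(x):=\tau^{-1}(\nu_g(x)-\sigma_g(x))e^{W(x)}$, the equation becomes $\sum_{g'}h_{gg'}(x)K(g,g')=\tilde\rho_g(x)$, and the pointwise compatibility $\sum_g\tilde\rho_g(x)=0$ holds by our choice $\nu_g=m_g\bar\sigma$. For each $x$ I apply Proposition \ref{lemm:GraphPDE} with $S\equiv 1/2$ and $\phi=\tilde\rho(x,\cdot)$ to obtain $\eta(x,\cdot):\G\to\R$ with $\sum_{g'}(\eta(x,g')-\eta(x,g))K(g,g')=\tilde\rho_g(x)$; I then set $h_{gg'}(x):=\eta(x,g')-\eta(x,g)$, which is manifestly antisymmetric, measurable in $x$ by linearity of the graph solve, and makes $(\gamma,h)$ fulfill \eqref{discrete_continuity}. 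The same proposition delivers the pointwise bound $\sum_{g,g'}h_{gg'}(x)^2K(g,g')\leq C\sum_g\tilde\rho_g(x)^2$ with $C$ depending only on the graph.

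Integrating against $e^{-W}$ yields
\[
\sum_{g,g'}\int_{\R^d}h_{gg'}^2 K(g,g')e^{-W}\,dx\;\leq\;\frac{C}{\tau^2}\sum_g\int_{\R^d}(\nu_g-\sigma_g)^2 e^{W}\,dx,
\]
which is finite because $\sigma_g\in L^2_W(\R^d)$ by hypothesis and $\nu_g=m_g\sum_{g'}\sigma_{g'}$ is a finite linear combination of $L^2_W$ functions. Combined with the Stage 1 bound, this gives $C^{W,K}_\tau(\gamma,h)<\infty$, hence $\mathcal A^{\G,W,\tau}(\mu,\sigma)<\infty$. The only conceptual obstacle—and the reason behind the specific choice $\nu_g=m_g\bar\sigma$—is the discrete compatibility $\sum_g\nu_g=\sum_g\sigma_g$, without which the graph Poisson equation producing $h$ would be unsolvable; once the intermediate measure is chosen so that this balance holds, the rest of the argument is a direct combination of elementary transport estimates and the graph-Laplacian bounds of Proposition \ref{lemm:GraphPDE}.
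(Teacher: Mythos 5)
Your construction is correct and follows essentially the same route as the paper's proof: you take the intermediate fiber measures $\nu_g=m_g\bar\sigma$ (the paper's $m_g\tilde f$), couple $\mu_g$ to $\nu_g$ with finite quadratic cost, and solve the graph Poisson equation of Proposition \ref{lemm:GraphPDE} fiberwise to produce the antisymmetric exchange map, with the same $L^2_W$ estimate closing the argument. The only differences are cosmetic — you exhibit the product coupling explicitly and absorb the $e^{W}$ weight into the right-hand side $\tilde\rho$ rather than into the definition of $h$.
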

\begin{proof}
We begin by showing that the cost $\mathcal{A}^{\mathcal{G},W,\tau}(\mu,\sigma)$ is finite. Let $f$ and $\tilde f $ be the densities for $\mu $ and $\sigma$ respectively, and define
\[ m_g := \int_{\R^d} f_g(x) dx , \quad g \in \G,  \]
\[ \tilde{f}(x) :=   \sum_{g}\tilde{f}_g(x), \quad x \in \R^d.\]
Notice that for every $g \in \G$ the positive measures $f_g$ and $m_g \tilde f$ have the same total mass, and thus there exists a coupling $\gamma_g$ between them. In particular, $\pi_{1 \sharp} \gamma= f_g $, $\pi_{2 \sharp} \gamma= m_g \tilde f$ and also
\[ \int_{\R^d \times \R^d} |x-\tilde x|^2 d\gamma_g< \infty. \]
Now, notice that for every $x \in \R^d$ we have
\[ \sum_{g} (   m_g \tilde f(x) - \tilde f_g(x)  ) =0. \]
Therefore, we may use Proposition \ref{lemm:GraphPDE} in order to find $\eta(x,\cdot)$ satisfying
\begin{equation}
   m_g \tilde{f}(x) - \tilde f_g (x)    =  \sum_{ g' } ( \eta(x,g) - \eta(x,g')   ) K(g,g'), \quad \forall g \in \G,
   \label{eqn:AuxExistenceStatic0}
\end{equation}
as well as
\begin{equation}
\sum_{g,g'} |\eta(x,g) - \eta(x,g')|^2 K(g,g') \leq C   \sum_{g} | m_g \tilde{f}(x) -  \tilde{f}_g(x) |^2 e^{2W(x)},
\label{eqn:AuxExistenceStatic1}
\end{equation}
for some constant $C$ that only depends on the weighted graph $(\G,K)$. We let
\[  h_{gg'}(x) := \frac{e^{W(x)}}{\tau} ( \eta(x,g) - \eta(x,g')), \quad x \in \R^d ,  \quad g,g' \in \G       \]
and notice that from \eqref{eqn:AuxExistenceStatic0} it follows that
\[  \sigma_g = \pi_{2 \sharp} \gamma_g - \tau \sum_{g'}h_{gg'}(x)K(g,g') e^{-W(x)}.   \]
We observe that $h$ is clearly antisymmetric in $\G \times \G$, and thanks to \eqref{eqn:AuxExistenceStatic1} and the fact that $\tilde f \in L^2_W(\R^d \times \G)$ also satisfies
\[  \sum_{gg'} \int_{\R^d} h_{gg'}^2 e^{-W}K(g,g') dx < \infty.   \]
The bottom line is that $(\gamma, h) \in ADM(\mu, \sigma)$ and  $C_\tau^{W,K}(\gamma, h) < \infty$. It follows that $W_{2}^{\G ,W,\tau}(\mu, \sigma) < \infty$.
\end{proof}

\begin{remark} 

To provide an example where the cost is infinite suppose that $\G$ consists of two elements $g_1, g_2$ and $K(g_1, g_2)>0$. Let $\mu$ be the measure with representation $\mu_{g_1}= \delta_{x_1}$ for some $x_1\in \R^d$ and $\mu_{g_2}=0$ (i.e. all mass is in $g_1$), and let $\sigma$ be the measure with $\sigma_{g_1}=0$ and $\sigma_{g_2}=\delta_{x_2}$ for some $x_2 \in \R^d$.  We show that $ADM(\mu, \sigma)= \emptyset$. Indeed, if there existed an admissible pair, from \eqref{discrete_continuity} we would have that
\[ \delta_{x_2}=\sigma_{g_2}= \pi_{2\sharp} \gamma_{g_2} - \tau h_{g_2g_1}(x) K(g_1,g_2) e^{-W(x)}dx = - \tau h_{g_2g_1}(x) K(g_1,g_2) e^{-W(x)}dx.  \]
In other words, we would conclude that $\delta_{x_2}$ admits a density w.r.t. Lebesgue measure.

%
\end{remark}

\color{black}

The main ingredient necessary to prove the main result of this section, i.e. Proposition \ref{opt_couplings}, is a set of variational inequalities satisfied by optimal pairs. We obtain such inequalities by computing the first variation of minimizing pairs under suitable perturbations. We do this in the next lemma.
Before stating this result let us first introduce some notation that will be used in the remainder of the section. We let $\mu, \sigma $ be as in Lemma \ref{coup_exists} and assume that $\sigma$ has a density. To a given minimizing pair $(\gamma,h)$ we associate the density
\begin{equation}
\label{trasported_density}
\bar{f}_g(x):= \sigma_g(x)+\tau\sum_{g'} h_{gg^{\prime}}K(g,g') e^{-W},
\end{equation}
which corresponds to the density of the measure $\pi_{2\sharp} \gamma_g$. An immediate observation is that each $\gamma_{g}$  is an optimal plan for the OT problem between $\mu_g$ and $\pi_{2\sharp} \gamma_g$ for the cost $c(x,y)=\frac{|x-y|^{2}}{2\tau}$. 
\nc
Given that $\pi_{2\sharp} \gamma_g$ has a density, we know that there exists a unique map $S_{g}:\R^d \rightarrow \R^d$ such that $(S_{g},\text{Id})_{\#}\bar{f}_{g}=\gamma_{g}$  (see \cite{A-G}[Theorem 6.2.4 and Remark 6.2.11], for example). We will use the maps $ \{S_{g}\}_{g\in\mathcal{G}}$ to state the variational inequalities satisfied by minimizers of the static semi-discrete transportation problem. This set of inequalities serves as analogue to the notion of cyclical monotonicity that appears in the classical (Euclidean) optimal transport setting.
\begin{lemma}\textbf{(Variational inequalities)} \label{perturba}
Let $\mu$ and $\text{\ensuremath{\sigma}}$  satisfy the hypothesis of Lemma \ref{coup_exists} and suppose that in addition $\sigma$ has a density w.r.t. $dxdg$. Let $(\gamma,h)$  be an element in $Opt(\mu,\sigma)$. Then, the following properties hold:
\begin{itemize}
\item For any $g$ in $\mathcal{G}$ and any $y$ in $\mathbb{R}^{d}$, suppose we have two sequences $\ensuremath{\{g_{l}\}_{l=0}^{M}}$ and $\ensuremath{\{g_{l}^{\prime}\}_{l^{\prime}=0}^{M^{\prime}}}$ in $\mathcal{G},$ that satisfy both:
\begin{itemize}
\item[a)] The two sequences describe paths in the graph with the same initial and final endpoints, i.e, we have that $g_{0}=g_{0}^{\prime},$ $g_{M}=g_{M^{\prime}}^{\prime}, K(g_{l},g_{l+1})>0,$ and $K(g'_l,g'_{l+1})>0.$
 \item[b)] The point $y$ is a Lebesgue point for all the functions $h_{g_{l-1}g_l}$ and $h_{g'_{l-1}, g'_l}$.
\end{itemize}
Then, 
\begin{equation}\label{g-cycle}\sum_{l=1}^{M}h_{g_{l-1}g_{l}}(y)= \sum_{l^{\prime}=1}^{M^{\prime}}h_{g'_{l-1}g'_{l}}(y).\ 
\end{equation}
\end{itemize}
\begin{itemize}
\item Fix $g$ and $g^{\prime}$ satisfying $K(g,g')>0$ and assume that $y$ is a Lebesgue point for $S_{g}$ which also belongs to the support of $\pi_{2\sharp}\gamma_g$, and that $y'$ is a Lebesgue point for $S_{g^{\prime}}$ which also belongs to the support of $\pi_{2\sharp}\gamma_{g'}$. Then,
\begin{align}\begin{aligned}\label{trans_{v}s_{e}xchange}(h_{gg^{\prime}}(y')-h_{gg^{\prime}}(y)) & +\bigg[\frac{|y^{\prime}-S_{g}(y)|^{2}}{2\tau}-\frac{|y-S_{g}(y)|^{2}}{2\tau}\bigg]\\
 & +\bigg[\frac{|y-S_{g^{\prime}}(y')|^{2}}{2\tau}-\frac{|y^{\prime}-S_{g^{\prime}}(y^{\prime})|^{2}}{2\tau}\bigg]\geq0.
\end{aligned}
\end{align}
\end{itemize}
\end{lemma}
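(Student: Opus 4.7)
The plan in both parts is to construct admissible perturbations of $(\gamma, h)$ tailored to the geometric content of the claim, and then read off the stated (in)equalities from the first-order optimality of $(\gamma, h)$.

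For part (a), I would keep $\gamma$ fixed and perturb only $h$ along the closed cycle $g_0 \to g_1 \to \dots \to g_M = g'_{M'} \to g'_{M'-1} \to \dots \to g'_0 = g_0$ obtained by concatenating the first path with the reverse of the second. The key observation is that the admissibility condition \eqref{discrete_continuity} is preserved (with $\gamma$ held fixed) precisely when $\sum_{g''} \delta h_{g g''}(x) K(g, g'') = 0$ at every pair $(x, g)$, i.e., when $\delta h$ is divergence-free in the discrete sense of subsection \ref{subsec:DiffOperatorsGraphs}. For an arbitrary $\phi \in C^\infty_c(\R^d)$ and scalar $\epsilon \in \R$, I would set
\[
\delta h_{g_{l-1}g_l}(x) = \frac{\epsilon\, \phi(x)}{K(g_{l-1},g_l)},\qquad \delta h_{g'_{l'-1}g'_{l'}}(x) = -\frac{\epsilon\, \phi(x)}{K(g'_{l'-1},g'_{l'})},
\]
extended antisymmetrically in $\G \times \G$ and set to zero on edges not visited by either path (with contributions summed on shared edges). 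The $K^{-1}$ weighting makes the incoming and outgoing flux at every node of the cycle cancel, so that $(\gamma, h+\delta h) \in ADM(\mu,\sigma)$ for every $\epsilon \in \R$. Minimality of $C^{W,K}_\tau$ forces the derivative in $\epsilon$ at $\epsilon=0$ to vanish, and a direct computation using the antisymmetry of $h, \delta h$ together with the symmetry of $K$ collapses this to
\[
\int_{\R^d} \phi(x)\, e^{-W(x)}\Bigl(\sum_{l=1}^M h_{g_{l-1}g_l}(x) - \sum_{l'=1}^{M'} h_{g'_{l'-1}g'_{l'}}(x)\Bigr)\,dx = 0.
\]
Since $\phi$ is arbitrary and $e^{-W}>0$, the bracket vanishes almost everywhere, and the Lebesgue-point hypothesis then promotes this to the pointwise identity \eqref{g-cycle}.

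For part (b), I would use a ``swap'' perturbation that transports a small amount of $\gamma$-mass between the fibers above $y$ and $y'$. Using the Brenier-type maps $S_g, S_{g'}$ (which exist because $\pi_{2\sharp}\gamma_g$ and $\pi_{2\sharp}\gamma_{g'}$ have densities, as noted before the lemma), redirect a small amount of mass in $\gamma_g$ whose sources sit near $S_g(y)$ so that their destination moves from near $y$ to near $y'$; and symmetrically for $\gamma_{g'}$. Concretely, with $\chi_r, \chi'_r$ normalized bumps supported on $B_r(y), B_r(y')$, the perturbed $\tilde\gamma_g$ has first marginal $\mu_g$ (the sources are unchanged) and satisfies $\pi_{2\sharp}\tilde\gamma_g - \pi_{2\sharp}\gamma_g = \epsilon(\chi'_r - \chi_r)\,dx$; the analogous identity for $\tilde\gamma_{g'}$ has the opposite sign. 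To preserve \eqref{discrete_continuity} for both $\sigma_g$ and $\sigma_{g'}$ I would modify $h$ only on the single edge joining $g$ and $g'$, setting
\[
\delta h_{gg'}(x) = \frac{\epsilon\,(\chi'_r(x) - \chi_r(x))}{\tau\, K(g,g')\, e^{-W(x)}},\qquad \delta h_{g'g}(x) = -\delta h_{gg'}(x).
\]
A direct check confirms that $(\tilde\gamma, h+\delta h) \in ADM(\mu,\sigma)$ for all sufficiently small $\epsilon > 0$. Because only the sign $\epsilon > 0$ is admissible (we are removing mass from $\gamma$), minimality gives $\tfrac{d}{d\epsilon}\big|_{\epsilon=0^+} C^{W,K}_\tau \geq 0$. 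Computing the three contributions to this first variation --- the changes in the transport costs of $\gamma_g$ and $\gamma_{g'}$, which in the limit $r \to 0$ become $\tfrac{\epsilon}{2\tau}(|y'-S_g(y)|^2 - |y-S_g(y)|^2)$ and $\tfrac{\epsilon}{2\tau}(|y-S_{g'}(y')|^2 - |y'-S_{g'}(y')|^2)$ respectively at Lebesgue points of $S_g$ and $S_{g'}$, together with the change $\epsilon\,(h_{gg'}(y')-h_{gg'}(y))$ in the $h$-penalty at a Lebesgue point of $h_{gg'}$ --- yields exactly \eqref{trans_{v}s_{e}xchange}.

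The main technical obstacle will be to make the swap construction of part (b) fully rigorous: one must verify that $\tilde\gamma_g$ is a genuine positive measure, i.e., that enough mass is actually available near $(S_g(y), y)$ to be redirected. This is precisely where the assumption $y \in \mathrm{supp}(\pi_{2\sharp}\gamma_g)$ enters: since the density of $\pi_{2\sharp}\gamma_g$ has positive total mass on every ball $B_r(y)$, one can define the perturbation by redirecting an $\epsilon$-fraction of the mass actually present in $B_r(y)$ (rather than a fixed amount), and then pass to the limit $r \to 0$ in the first variation using the Lebesgue differentiation theorem applied to $h_{gg'}$, $|\cdot - S_g(y)|^2$ and $|\cdot - S_{g'}(y')|^2$. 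No analogous issue arises in part (a), because the perturbation there only modifies $h$ and is automatically positivity-preserving.
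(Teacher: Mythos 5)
Your proposal is correct and follows essentially the same route as the paper: for \eqref{g-cycle} a divergence-free perturbation of $h$ supported on the concatenated cycle (the paper localizes directly with $\mathds{1}_{B_r(y)}$ and derives two one-sided inequalities where you use a two-sided perturbation to get the equality at once), and for \eqref{trans_{v}s_{e}xchange} a one-sided mass-swap between the fibers over $B_r(y)$ and $B_r(y')$ compensated through the single edge $gg'$, with the fraction-of-available-mass fix you identify being exactly what the paper does. The only point left implicit is that once you redirect an $\veps$-fraction of the actual mass, the single-edge compensation forces the mass sent from $B_r(y)$ into $B_r(y')$ to land with density proportional to $\bar{f}_{g'}$ there (and conversely, with the mass ratio $m_1/m_2$ as scaling); the paper arranges this with auxiliary optimal maps $T_1,T_2$ between the rescaled restrictions, whose $O(r)$ displacement is absorbed into the remainder as $r\to 0$.
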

\begin{proof}
Let us start with a small outline describing the main ideas behind the proof.

\textbf{Heuristic Proof:} We begin analyzing \eqref{trans_{v}s_{e}xchange}. The idea is to perturb $\gamma_{g}$ by transporting a small amount of mass from $(S_{g}(y),g)$ into $\ensuremath{(y^{\prime},g)}$ instead of transporting it to $(y,g)$. On the other hand,  $\gamma_{g^{\prime}}$ is perturbed by transporting a small amount of mass from  $\ensuremath{\ensuremath{\ensuremath{(S(y^{\prime}),g^{\prime})}}}$ into $\ensuremath{\ensuremath{\ensuremath{(y,g^{\prime})}}}$ instead of transporting it to $\ensuremath{\ensuremath{(y^{\prime},g^{\prime})}}$. By modifying the plans $\gamma_g$ and $\gamma_{g'}$, we create a transport cost differential
\begin{equation}\label{trans_dif1}\bigg[\frac{|y^{\prime}-S_{g}(y)|^{2}}{2\tau}-\frac{|y-S_{g}(y)|^{2}}{2\tau}\bigg]+\bigg[\frac{|y-S_{g^{\prime}}(y^{\prime})|^{2}}{2\tau}-\frac{|y^{\prime}-S_{g^{\prime}}(y^{\prime})|^{2}}{2\tau}\bigg],
\end{equation}
per unit of mass transported. To balance the above perturbation in the transportation and remain with an admissible pair we must also perturb $h_{y^{\prime}}(gg^{\prime})$ and $h_{y^{\prime}}(g^{\prime}g)$ so that the extra amount of mass created by the transportation perturbation gets removed from $(y^{\prime},g)$ and put into $(y^{\prime},g^{\prime}).$ We must also perturb $h_{y}(g^{\prime}g)$ and $h_{y}(gg^{\prime})$ so that the extra amount of mass created by the transportation perturbation gets removed from $(y,g^{\prime})$ and put into $(y,g).$ Modifying the mass exchange function $h$ in this way creates a mass exchange cost differential of
\[h_{gg^{\prime}} (y^{\prime})-h_{gg^{\prime}}(y),
\]
per unit of mass transported. The resulting modified pair is still admissible, and by optimality of the original pair $(\gamma,h)$, it must be the case that
\begin{align*}\begin{aligned} (h_{gg^{\prime}} (y^{\prime})-h_{gg^{\prime}}(y)) & +\bigg[\frac{|y^{\prime}-S_{g}(y)|^{2}}{2\tau}-\frac{|y-S_{g}(y)|^{2}}{2\tau}\bigg]\\
 & +\bigg[\frac{|y-S_{g^{\prime}}(y^{\prime})|^{2}}{2\tau}-\frac{|y^{\prime}-S_{g^{\prime}}(y^{\prime})|^{2}}{2\tau}\bigg]\geq0,
\end{aligned}
\end{align*}
which is precisely \eqref{trans_{v}s_{e}xchange}.

To deduce \eqref{g-cycle} we consider two sequences $\{g_{l}\}_{l=1}^{M}$ and $\{g_{l}^{\prime}\}_{l=1}^{M^{\prime}}$ satisfying the given conditions $a)$ and $b)$ for some $y $ in $\mathbb{R}^{d}.$
We send some extra mass from the point $(y,g_{0})$ to the point $(y,g_{1})$ by increasing $h_{y}(g_{0}g_{1}).$ Then we take the extra mass at $ (y,g_{1})$ and send it to $(y,g_{2})$ by increasing $h_{y}(g_{1}g_{2}).$ We can continue in this fashion until we reach the point $(y,g_{M})=(y,g'_{M^{\prime}}).$ At this stage we will have a deficit of mass at the point $(y,g_{1})$ and an excess of mass at the point $(y,g'_{M^{\prime}})$ and we will pay an excess exchange cost given by:
\[
\sum_{l=1}^{M}h_{g_{l-1}g_{l}}(y),
\]
per unit of mass transported. We can balance the previous perturbation by reversing the mass exchange along the sequence $\{g_{l}^{\prime}\}_{l=1}^{M^{\prime}}$. Namely,  for each pair $g'_{l},g'_{l+1}$ we reduce the mass sent from $(y,g'_{l})$ to $(y,g'_{l+1})$ by decreasing $h_{g'_{l}g'_{l+1}}(y).$ Doing this we save
\[
\sum_{l=1}^{M'}h_{g'_{l-1}g'_{l}}(y)
\]
in terms of the cost. By optimality we must have 
\[
\sum_{l=1}^{M}h_{g_{l-1}g_{l}}(y) \geq \sum_{l=1}^{M'}h_{g'_{l-1}g'_{l}}(y).
\]
We can then switch the roles of the sequences and obtain the opposite inequality and from this deduce \eqref{g-cycle}.

Let us now make the previous ideas rigorous.

\textbf{Rigorous proof: 1.}  We begin with the proof of  \eqref{g-cycle}.
Let us fix two positive real numbers $r,\veps>0$ . We perturb our minimizer $(\gamma, h)$ by considering a new mass exchange function:
\[h_{g_{l-1}g_{l}}^{r,\varepsilon}(\hat y):=\begin{cases}
h_{g_{l-1}g_{l}}(\hat y) & \text{if\hspace{1mm}}\hat y\in B_{r}^{c}(y)\\
h_{g_{l-1}g_{l}}(\hat y)+\frac{\veps}{\tau K(g_{l-1}, g_l)e^{-W(\hat y)}} & \text{if\hspace{1mm}} \hat y\in B_{r}(y),
\end{cases}
\]
\[  h_{g'_{l-1}g'_{l}}^{r,\varepsilon}(\hat y):=\begin{cases}
h_{g'_{l-1}g'_{l}}(\hat y), & \text{if\hspace{1mm}}\hat y\in B_{r}^{c}(y)\\
h_{g'_{l-1}g'_{l}}(\hat y)- \frac{\veps}{\tau K(g'_{l-1}, g'_l)e^{-W(\hat y)}} & \text{if\hspace{1mm}}\hat y\in B_{r}(y),
\end{cases}\]
$h^{r,\veps}_{g_{l}g_{l-1}} = -h_{g_{l}g_{l-1}} $ and $h^{r,\veps}_{g'_{l}g'_{l-1}} = -h_{g'_{l}g'_{l-1}} $ to maintain the asymmetry, and finally $h_{gg^{\prime}}^{r,\varepsilon}=h_{gg^{\prime}}$ whenever $(g,g^{\prime})$ is not one of the consecutive pairs in the sequences. In the above we use $B_{r}(y)$ to denote the Euclidean ball of radius $r$ centered at $y.$ 

It is straightforward to see that the pair $(\gamma, h^{r,\veps})$ is admissible, and thus by the optimality of $(\gamma, h)$ we have  $C_{\tau}(\gamma,h)\leq C_{\tau}(\gamma, h^{r,\varepsilon})$, which simplifies to
\begin{align*}\begin{aligned}0 & \leq \frac{\tau}{2}\sum_{l=1}^M\int_{B_r(y)} \left( \left( h_{g_{l-1} g_{l}} +  \frac{\veps}{\tau K(g_{l-1}, g_l)e^{-W(\hat y)}} \right)^2  - ( h_{g_{l-1} g_{l}} )^2\right)K(g_{l-1},g_{l}) e^{-W(\hat y)} d\hat y      
\\& + \frac{\tau}{2}\sum_{l=1}^{M'}\int_{B_r(y)} \left( \left( h_{g'_{l-1} g'_{l}} - \frac{\veps}{\tau K(g'_{l-1}, g'_l)e^{-W(\hat y)}} \right)^2  - ( h_{g'_{l-1} g'_{l}} )^2\right)K(g'_{l-1},g'_{l}) e^{-W(\hat y)} d\hat y.
\end{aligned}
\end{align*}
Dividing by $\varepsilon$ and letting $\varepsilon\rightarrow0$ yields
\begin{align*}\begin{aligned}0 & \leq \int_{B_{r}(y)} \left( \sum_{l=1}^M h_{g_{l-1}g_l}(\hat{y})- \sum_{l=1}^{M'} h_{g'_{l-1}g'_l}(\hat{y})      \right)   d\hat y.
\end{aligned}
\end{align*}
Dividing by the volume of $B_{r}(y)$, letting $r\rightarrow0,$ and recalling that $y$ was assumed to be a Lebesgue point for all the functions $h_{g_{l-1}g_l}$ and $h_{g'_{l-1}, g'_l}$ we conclude that
\[ \sum_{l=1}^M h_{g_{l-1}g_l}({y})\geq  \sum_{l=1}^{M'} h_{g'_{l-1}g'_l}({y}).  \]
Switching the roles of the sequences we obtain the reverse inequality. \eqref{g-cycle} follows.

\textbf{2.} Let us now consider $(y_1,g_1)$ and $(y_2,g_2)$ such that $K(g_1, g_2)>0$, $y_1$ is a Lebesgue point of $S_{g_1}$ and belongs to the support of $\pi_{2\sharp} \gamma_{g_1} $, $y_2$ is a Lebesgue point of $S_{g_2}$ and belongs to the support of $\pi_{2 \sharp } \gamma_{g_2}$, and $y_1 \not = y_2$. Fix $\veps>0$, and let $r$ be a small enough positive number so that $B_r(y_1) \cap B_r(y_2)= \emptyset$. We now construct measures $\gamma_{g_1}^{r,\veps}$, $\gamma_{g_2}^{r,\veps}$ and a function $h^{r,\veps}_{g_1g_2}$ which we use to formalize the perturbation argument provided in the heuristic proof. To define these measures and function we first need to introduce some objects.

Let us start by defining
\[ m_1:= \gamma_{g_1}(\R^d\times B_r(y_1)), \quad m_2:= \gamma_{g_2}(\R^d\times B_r(y_2)).\]
Notice that both numbers are nonzero given that $y_1$ belongs to the support of $\pi_{2 \sharp} \gamma_{g_1}$ and $y_2$ belongs to the support of $\pi_{2 \sharp} \gamma_{g_2}$.
To ease the notation we use $\bar{\mu}_{g_1}$ and $\bar{\mu}_{g_2}$ to denote the positive measures
\[ \bar{\mu}_{g_1} := \pi_{2\sharp} \gamma_{g_1} = \bar{f}_{g_1} dx, \quad  \bar{\mu}_{g_2} := \pi_{2\sharp} \gamma_{g_2} = \bar{f}_{g_2}dx,\] 
and consider also the positive measures $\bar{\mu}_{g_1} |_{B_r(y_1)}$ and $\bar{\mu}_{g_2} |_{B_r(y_2)}$ defined by
\[ \bar{\mu}_{g_1} |_{B_r(y_1)}(A) := \bar{\mu}_{g_1}(A \cap B_r(y_1)), \quad  \bar{\mu}_{g_2} |_{B_r(y_2)}(A) := \bar{\mu}_{g_2}(A \cap B_r(y_2)),  \]
for all Borel subsets $A$ of $\R^d$. 

Let us consider the maps $\mathcal{T}_{y_{1}}^{y_{2}}(y):=(y-y_{1}+y_{2})$ and $\mathcal{T}_{y_{2}}^{y_{1}}(y):=(y-y_{2}+y_{1})$. Also, let $T_{1}: B_r(y_1) \rightarrow B_r(y_1)$ be an optimal transport map (for the quadratic cost) between the measures $ \mathcal{T}_{y_{2}}^{y_{1}}{\sharp }( \frac{m_1}{m_2} \bar{\mu}_{g_2}|_{B_r(y_2)})$ and  the measure $\bar{\mu}_{g_1} |_{B_r(y_1)}$ (measures that can be checked to have the same total mass), and let $T_{2}: B_r(y_2) \rightarrow B_r(y_2)$ be an optimal transport map between the measures $ \mathcal{T}_{y_{1}}^{y_{2}}{\sharp }( \bar{\mu}_{g_1}|_{B_r(y_1)})$ and the measure $\frac{m_1}{m_2}\bar{\mu}_{g_2} |_{B_r(y_2)}$.

We can now define the measures $\gamma_{g_1}^{r,\veps}$ and $\gamma_{g_1}^{r,\veps}$ by
\begin{align*}
 \gamma_{g_1}^{r,\veps}(A \times C) &:= \gamma_{g_1}(A \times C)  -\veps \gamma_{g_1}(A \times (C \cap B_r(y_1))) 
 \\&+ \veps (S_{g_1}, T_{2} \circ \mathcal{T}_{y_1}^{y_2})_{\sharp} \bar{\mu}_{g_1}|_{B_r(y_1)} (A \times C), 
\end{align*}
and
\begin{align*}
 \gamma_{g_2}^{r,\veps}(A \times C) &:= \gamma_{g_2}(A \times C) -\veps \frac{m_1}{m_2} \gamma_{g_2}(A \times (C \cap B_r(y_2))) 
 \\&+ \veps  (S_{g_2}, T_{1} \circ \mathcal{T}_{y_2}^{y_1})_{\sharp}( \frac{m_1}{m_2}\bar{\mu}_{g_2}|_{B_r(y_2)} )(A \times C), 
\end{align*}
for all $A,C$ Borel subsets of $\R^d$. For $g$ that is neither $g_1$ nor $g_2$ we set $\gamma_{g}^{r,\veps}= \gamma_g$. Notice that $\pi_{1\sharp } \gamma_{g_1}^{r,\veps} = \mu_{g_1}$ and $\pi_{2\sharp } \gamma_{g_2}^{r,\veps} = \mu_{g_2}$.

Finally, we define 
\[ h_{g_1g_2}^{r,\veps}(y):= h_{g_1g_2}(y) + \frac{\veps}{\tau K(g_1,g_2)e^{-W(y)} } \left( \frac{m_1}{m_2} \overline{f}_{g_2}(y) \mathds{1}_{B_r(y_2)}(y) -  \overline{f}_{g_1}(y) \mathds{1}_{B_r(y_1)}(y)  \right)  \]
and set $h_{g_2g_1}^{r,\veps}=-h_{g_1g_2}^{r,\veps}$, and $h_{gg'}^{r,\veps}= h_{gg'}$ for pairs $g,g'$ different from $g_1g_2$. 
It is straighforward to check that $h^{r,\veps} \in L_2^{W,K}(\R^d \times \G \times \G )$ and that for every $g \in \G$ 
\[  \sigma_g = \pi_{2\sharp  }\gamma^{r,\veps}_g- \tau \sum_{g'}h_{gg'}K(g,g')e^{-W}.  \]
That is, $(\gamma^{r,\veps}, h^{r,\veps}) \in ADM(\mu, \sigma)$ and thus by optimality of $(\gamma, h)$ we deduce that $C_\tau(\gamma,h )\leq C_\tau(\gamma^{r,\veps}, h^{r,\veps})$. This inequality simplifies to

\nc

\begin{align*}\varepsilon\int_{B_{r}(y_{1})} & \bigg[\frac{|Id-S_{g_1}|^{2}}{2\tau}-\frac{|T_{2}\mathcal{\circ T}_{y_{1}}^{y_{2}}-S_{g_1}|^{2}}{2\tau}\bigg]\bar{f}_{g_1}\hspace{1mm}dy\\
 & \leq\varepsilon\frac{m_{1}}{m_{2}}\int_{B_{r}(y_{2})}\bigg[\frac{|T_{1}\mathcal{\circ T}_{y_{2}}^{y_{1}}-S_{g_2}|^{2}}{2\tau}-\frac{|Id-S_{g_2}|^{2}}{2\tau}\bigg]\bar{f}_{g_2}\hspace{1mm}dy\\
 & +\frac{\tau}{2}\int_{B_{r}(y_{1})}\bigg[(h_{g_1g_2}-\frac{\varepsilon}{\tau K(g_1,g_2) e^{-W}}\bar{f}_{g_1})^{2}-h_{g_1g_2}^{2}\bigg]K(g_1,g_2)e^{-W}\hspace{1mm}dy\hspace{1mm}\\
 & +\frac{\tau}{2}\int_{B_{r}(y_{2})}\bigg[(h_{g_1g_2}+\frac{\varepsilon}{\tau K(g_1,g_2)e^{-W}}\frac{m_1}{m_2}\bar{f}_{g_2})^{2}-h_{g_1g_2}^{2}\bigg]K(g_1,g_2) e^{-W}\hspace{1mm}dy.
\end{align*}
If we divide by $\varepsilon$ and let $\varepsilon\rightarrow 0$
we obtain
\begin{align*}\int_{B_{r}(y_{1})} & \bigg[\frac{|Id-S_{g_1}|^{2}}{2\tau}-\frac{|T_{2}\mathcal{\circ T}_{y_{1}}^{y_{2}}-S_{g_1}|^{2}}{2\tau}\bigg]\bar{f}_{g_1}\hspace{1mm}dy\\
 & \leq\frac{m_{1}}{m_{2}}\int_{B_{r}(y_{2})}\bigg[\frac{|T_{1}\mathcal{\circ T}_{y_{2}}^{y_{1}}-S_{g_2}|^{2}}{2\tau}-\frac{|Id-S_{g_2}|^{2}}{2\tau}\bigg]\bar{f}_{g_2}\hspace{1mm}dy\\
 & \hspace{4em}-\int_{B_{r}(y_{1})}h_{g_1g_2}\bar{f}_{g_1}\hspace{1mm}dy+ \frac{m_{1}}{m_{2}}\int_{B_{r}(y_{2})}h_{g_1g_2}\bar{f}_{g_2}\hspace{1mm}dy.
\end{align*}
Consequently, dividing by $m_{1}$ and expanding we obtain
\begin{align}\begin{aligned}\label{pre_exchange}\frac{1}{m_{1}}\int_{B_{r}(y_{1})} & \bigg[\frac{|Id-S_{g_1}|^{2}}{2\tau}-\frac{|\mathcal{T}_{y_{1}}^{y_{2}}-S_{g_1}|^{2}}{2\tau}+R_{2}(y)\bigg]\bar{f}_{g_1}\hspace{1mm}dy\\
 & \leq\frac{1}{m_{2}}\int_{B_{r}(y_{2})}\bigg[\frac{|\mathcal{T}_{y_{2}}^{y_{1}}-S_{g_2}|^{2}}{2\tau}-\frac{|Id-S_{g_2}|^{2}}{2\tau}+R_{1}(y)\bigg]\bar{f}_{g_2}\hspace{1mm}dy\\
 &-\frac{1}{m_{1}}\int_{B_{r}(y_{1})}h_{g_1g_2}\bar{f}_{g_1}\hspace{1mm}dy+  \frac{1}{m_{2}}\int_{B_{r}(y_{2})}h_{g_1g_2}\bar{f}_{g_2}\hspace{1mm}dy,
\end{aligned}
\end{align}
where
\begin{align*}\frac{1}{m_{1}}\int_{B_{r}(y_{1})}|R_{2}(y)|\bar{f}_{g_1}\hspace{1mm}dy & =\frac{1}{m_{1}}\int_{B_{r}(y_{1})}\bigg|\frac{|\mathcal{T}_{y_{1}}^{y_{2}}-S_{g_1}|^{2}}{2\tau}-\frac{|T_{2}\mathcal{\circ T}_{y_{1}}^{y_{2}}-S_{g_1}|^{2}}{2\tau}\bigg|\bar{f}_{g_1}\hspace{1mm}dy\\
 & =\frac{1}{2\tau m_{1}}\int_{B_{r}(y_{1})}|\langle\mathcal{T}_{y_{1}}^{y_{2}}-T_{2}\mathcal{\circ T}_{y_{1}}^{y_{2}},\mathcal{T}_{y_{1}}^{y_{2}}+T_{2}\mathcal{\circ T}_{y_{1}}^{y_{2}}-2S_{g_1}\rangle|\bar{f}_{g_1}\hspace{1mm}dy\\
 &\hspace{10em} \leq\frac{r}{\tau}\frac{1}{m_{1}}\int_{B_{r}(y_{1})}|\mathcal{T}_{y_{1}}^{y_{2}}+T_{2}\mathcal{\circ T}_{y_{1}}^{y_{2}}-2S_{g_1}|\bar{f}_{g_1}dy,
\end{align*}
and by a similar computation
\[
\frac{1}{m_{2}}\int_{B_{r}(y_{2})}|R_{1}(y)|\bar{f}_{g_2}\hspace{1mm}dy\leq\frac{r}{\tau}\frac{1}{m_{2}}\int_{B_{r}(y_{2})}|\mathcal{T}_{y_{2}}^{y_{1}}+T_{1}\mathcal{\circ T}_{y_{2}}^{y_{1}}-2S_{g_2}|\bar{f}_{g_2}dy.
\]
We now use the above estimates and let $r\downarrow0$ in \eqref{pre_exchange} to deduce \eqref{trans_{v}s_{e}xchange} (with $(y,g)=(y_1,g_1)$ and $(y',g')=(y_2,g_2)$).
\end{proof}

Before proceeding with our characterization of optimal pairs let us first recall some useful definitions from the classical optimal transport theory. 
First, given a symmetric $c\hspace{1mm}:\hspace{1mm}\mathbb{R}^{d}\times\mathbb{R}^{d}\rightarrow\mathbb{R},$ we say that a function $\varphi\hspace{1mm}:\hspace{1mm}\mathbb{R}^{d}\rightarrow\mathbb{R}$ is $c$\textit{-concave}, if it can be written as
\begin{equation*}
\varphi(y)=\inf_{x\in\mathbb{R}^{d}}c(x,y)-\psi(x),\quad \forall y \in \R^d,
\end{equation*}
for some $\psi\hspace{1mm}:\hspace{1mm}\mathbb{R}^{d}\rightarrow\mathbb{R}.$ The $c$-\textit{transform} of a given $\varphi$ is the function $\varphi^{c}$ defined by 
\begin{equation}\label{ctrans_def}
\varphi^{c}(x):=\inf_{y\in\mathbb{R}^{d}}c(x,y)-\varphi(y),
\end{equation}
and its $c$-\textit{superdifferential} is the set
\begin{equation}\label{sub_def}
\partial_{+}^{c}\varphi:=\bigg\{(x,y)\in\mathbb{R}^{d}\times\mathbb{R}^{d}\hspace{1mm}:\hspace{1mm}\varphi^c(x)+ \varphi(y)=c(x,y)\bigg\}.
\end{equation}
To characterize minimizers of Problem 2.4, in the proposition below we will use the quadratic cost
\[
c(x,y):=\frac{1}{2\tau}|x-y|^{2}.
\]
We will also use the spaces $\Phi$ and $\Phi^\perp$ defined in \eqref{kernel}.
\begin{proposition}
{\textbf{(Characterization of optimal pairs)}}
\label{opt_couplings}
Let $\mu, \sigma$ be absolutely continuous with respect to $dxdg$ and assumme that $W_2^{\G,W,\tau}(\mu, \sigma)<\infty$. 
Also, let $(\gamma,h)$ be in $ADM(\mu,\sigma)$ and assume that $\mu_g$'s density and $\bar{f}_g$ as defined in \eqref{trasported_density} are strictly positive for every $g$ in $\mathcal{G}$. Then, the following are equivalent
\begin{itemize}
\item[$i.$] $C_{\tau}(\gamma,h)$  is minimal among all pairs in $ADM(\mu,\sigma).$ 
\item[$ii.$]There exist functions $\phi,\psi\hspace{1mm}:\hspace{1mm}\mathbb{R}^{d}\times\mathcal{G}\rightarrow\mathbb{R}$ satisfying the following properties:
\begin{itemize}
\item[$a)$] For every $g$ in $\mathcal{G},$ the plan $\gamma_{g}$  is supported on $\partial^{c}_{+}\phi_{g},$  for some $c$-concave function $\phi_{g}(\cdot)=\phi(\cdot,g).$
\item[$b)$] For Lebesgue almost every point $y \in \R^d$ the function $\psi_{y}(\cdot)=\psi(y,\cdot)$ satisfies 
\begin{equation}\label{mass_A1}\psi_{y}(g^{\prime})-\psi_{y}(g)=h_{gg^{\prime}}(y), \quad \forall g,g' \text{ with } K(g,g')>0.
\end{equation}
\item[$c)$] The difference $\phi-\psi$ belongs to $\Phi^\perp$ as defined in \eqref{kernel}. 
\end{itemize}
\item[$iii.$] We can find a single potential $\varphi\hspace{1mm}:\hspace{1mm}\mathbb{R}^{d}\times\mathcal{G}\rightarrow\mathbb{R}$ satisfying properties $a),$ $b),$ and $c)$ from item $ii.$
\end{itemize}
\end{proposition}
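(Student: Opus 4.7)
The strategy is to close the ring $(i)\Rightarrow (ii)\Rightarrow (iii)\Rightarrow (i)$. The step $(ii)\Rightarrow (iii)$ is mostly notational: Lemma~\ref{kernel_perp} decomposes $\phi-\psi=\varphi_1(x)+\varphi_2(g)$, and the candidate single potential $\varphi(x,g):=\phi(x,g)-\varphi_2(g)=\psi(x,g)+\varphi_1(x)$ satisfies $a$ (subtracting a function of $g$ alone from a $c$-concave $\phi_g$ preserves $c$-concavity and leaves $\partial^c_+\varphi_g=\partial^c_+\phi_g$, since only a constant is added to $\varphi_g^c$), satisfies $b$ (because $\varphi(\cdot,g')-\varphi(\cdot,g)=\psi(\cdot,g')-\psi(\cdot,g)=h_{gg'}$), and trivially satisfies $c$.

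The step $(iii)\Rightarrow (i)$ uses duality. For any admissible $(\gamma',h')$, fiberwise Kantorovich duality for the quadratic cost gives
\[ \sum_g\int c\,d\gamma'_g \;\geq\; \sum_g\int\varphi_g^c\,d\mu_g+\sum_g\int\varphi_g\,d(\pi_{2\sharp}\gamma'_g), \]
tight at $(\gamma,h)$ by property $a$. Admissibility $\pi_{2\sharp}\gamma'_g=\sigma_g+\tau\sum_{g'}h'_{gg'}K(g,g')e^{-W}dx$, the antisymmetry of $h'$, and property $b$ in the form $\nabla_g\varphi=h$ rewrite the last pairing as $\sum_g\int\varphi_g\,d\sigma_g-\tfrac{\tau}{2}\sum_{gg'}\int h_{gg'}h'_{gg'}Ke^{-W}dx$. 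Combined with the elementary bound $\tfrac{\tau}{4}(h')^2-\tfrac{\tau}{2}hh'\geq-\tfrac{\tau}{4}h^2$ (tight iff $h'=h$), this yields a lower bound on $C_\tau(\gamma',h')$ that is independent of $(\gamma',h')$ and attained at $(\gamma,h)$.

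For $(i)\Rightarrow (ii)$, properties $a$ and $b$ are constructive. Property $a$ is Brenier: fixing $h$ and perturbing only $\gamma$ shows each $\gamma_g$ is optimal for the quadratic OT between $\mu_g$ and $\bar\mu_g=\pi_{2\sharp}\gamma_g$, both absolutely continuous with strictly positive density, so there exists a $c$-concave $\phi_g$ with $\mathrm{supp}(\gamma_g)\subseteq\partial^c_+\phi_g$ and $S_g(y)=y-\tau\nabla\phi_g(y)$ for a.e.\ $y\in\R^d$. Property $b$ uses Lemma~\ref{perturba}: fix a base node $g_0\in\G$ and set $\psi_y(g):=\sum_{l=1}^m h_{g_{l-1}g_l}(y)$ along any graph path $g_0,\ldots,g_m=g$. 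The path-independence identity \eqref{g-cycle} makes $\psi$ well-defined at almost every $y$, and telescoping gives $\psi_y(g')-\psi_y(g)=h_{gg'}(y)$ whenever $K(g,g')>0$.

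The hard part is property $c$. Substituting $S_g(y)=y-\tau\nabla\phi_g(y)$ into the quadratic differences simplifies $c(S_g(y),y')-c(S_g(y),y)=\tfrac{|y-y'|^2}{2\tau}+\nabla\phi_g(y)\cdot(y'-y)$ (and likewise for the $g'$ term), reducing \eqref{trans_{v}s_{e}xchange} to
\[ h_{gg'}(y')-h_{gg'}(y)+\tfrac{1}{\tau}|y-y'|^2+\bigl(\nabla\phi_g(y)-\nabla\phi_{g'}(y')\bigr)\cdot(y'-y)\geq 0 \]
for a.e.\ $y,y'\in\R^d$ and every edge $(g,g')$. Testing against a nonnegative $\eta\in C_c^\infty(\R^d)$, setting $y'=y+tv$ for $v\in\R^d$, integrating in $y$, dividing by $|t|$, and sending $t\to 0^\pm$ (with a change of variables absorbing the $h_{gg'}(y+tv)$ term) produces two matching one-sided inequalities whose combination is the distributional identity $\nabla_y h_{gg'}=\nabla\phi_{g'}-\nabla\phi_g$ on $\R^d$. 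Hence $h_{gg'}(y)-\phi_{g'}(y)+\phi_g(y)$ equals a constant $C_{gg'}$ a.e.\ on each edge, so $(\phi-\psi)(y,g')-(\phi-\psi)(y,g)=-C_{gg'}$ is independent of $y$ on every edge; telescoping along paths in the connected graph $(\G,K)$ extends this to arbitrary pairs, so $\phi-\psi$ has the form $u(y)+v(g)$ and lies in $\Phi^\perp$ by the converse in Lemma~\ref{kernel_perp}. The main obstacle is the rigorous passage to the distributional derivative: since $h_{gg'}$ is only $L^2_{W,K}$ and Brenier differentiability of $\phi_g$ is only a.e., the ``$t\to 0$'' step has to be performed in integral form against $\eta$, exploiting Lebesgue points, the $L^1_{loc}$ regularity of $\nabla\phi_{g'}$ coming from local Lipschitzness of $c$-concave functions, and the strict positivity of the densities to guarantee that the hypotheses of Lemma~\ref{perturba} hold on a full-measure set.
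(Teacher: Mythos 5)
Your proposal is correct and follows essentially the same architecture as the paper: the same cycle $i\Rightarrow ii\Rightarrow iii\Rightarrow i$, the same Brenier/path-sum constructions for $ii.a)$ and $ii.b)$, the same use of Lemma \ref{kernel_perp} for $ii\Rightarrow iii$, and the same duality computation (with the square-completion $\frac{\tau}{4}(h')^2-\frac{\tau}{2}hh'\geq -\frac{\tau}{4}h^2$) for $iii\Rightarrow i$. The one place where you deviate is the proof of $ii.c)$: after reducing the variational inequality \eqref{trans_{v}s_{e}xchange} via $S_g=y-\tau\nabla\phi_g$, you pass to a two-sided distributional limit against a test function to obtain $\nabla_y h_{gg'}=\nabla\phi_{g'}-\nabla\phi_g$ weakly, whereas the paper argues pointwise, combining the subgradient inequality for the convex functions $\beta_g=\frac{|y|^2}{2\tau}-\phi_g$ with the fact that their second derivatives along almost every line are Radon measures with at most countably many atoms, to conclude that $u_{gg'}=\psi_{g'}-\psi_g+\phi_g-\phi_{g'}$ is a.e.\ constant. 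Your weak-derivative route is a legitimate and arguably cleaner alternative for that step (the translation-continuity and Lebesgue-point issues you flag are handled by the local Lipschitzness of $c$-concave functions and the strict positivity of $\bar f_g$, exactly as in the paper), and both arguments land on the same decomposition $\phi-\psi=u(y)+v(g)\in\Phi^\perp$.
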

\begin{proof} \textbf{1.} Optimality of $(\gamma,h)$ implies that $\gamma_g$ is an optimal coupling between $\mu_g$ and $\pi_{2\sharp}\gamma_{g}$ for every $g$, and thus the proof that $i.\implies ii.a)$ follows directly from the classical (Euclidean) optimal transport theory (see \cite[Theorem 1.13]{A-G}). To prove that  $i.\implies ii.b),$  let us fix $y_{0}$ in $\mathbb{R}^{d}$ and $g_{0}$ in $\mathcal{G}$  and define
\[\psi(y_{0},g):=\sum_{l=1}^{M}h_{g_{l-1}g_{l}}(y_{0}),\ 
\]
for some sequence $\{g_{l}\}_{l=0}^{M}$ starting at $g_0$, with $K(g_l,g_{l+1})>0$, and for which $g_{M}=g$. Such sequence exists given that $(\G, K)$ was assumed to be connected. On the other hand, observe that by \eqref{g-cycle} the potential $\psi$ is well defined (i.e. does not depend on the actual sequence connecting $g_0$ and $g$). In particular, we also have
\[ \psi(y_0, g') = \sum_{l=1}^M h_{g_{l-1}g_{l}}(y_{0}) + h_{gg'}(y_0).\]
$ii.b)$ now follows.

We proceed to show that  $i.\implies ii.c).$  According to Remark \ref{rem:EEPerp} it suffices to show that the difference  $\psi-\phi$  is orthogonal to any $\veps$ of the form \eqref{eqn:AuxTestFunctionsEE} 
\[\varepsilon=\xi^r_{y^{\prime},g}-\xi^r_{y^{\prime},g^{\prime}}-\xi^r_{y,g}+\xi^r_{y,g^{\prime}},\ 
\]
for arbitrary $y,y',g,g'$ and $r>0$. To show this we proceed as follows.

Fix $g,g'$ with $K(g,g')>0$. We first claim that the function
\[ u_{gg'}(y) := \psi_{g'}(y) - \psi_g(y) +\phi_g(y) - \phi_{g'}(y).   \]
is a.e. constant, where $\psi$ is as in item $ii.b)$. To see this, notice that from  Brenier's theorem for the classical optimal transport problem with the (rescaled) quadratic cost the following holds: the functions $\phi_g, \phi_{g'}$ can be written as
\[  \phi_g(y)=-\beta_g(y) + \frac{|y|^2}{2\tau},  \quad \phi_{g'}(y)=-\beta_{g'}(y) + \frac{|y|^2}{2\tau}, \]
for convex functions $\beta_g$ and $\beta_{g'}$, and the maps $S_g$ and $S_{g'}$ are a.e. equal to $\tau \nabla_y \beta_g$ and $\tau \nabla_y \beta_{g'}$ respectively.  In particular, we can write
\[ u_{gg'}(y) = \psi_{g'}(y) - \psi_g(y) -\beta_g(y) + \beta_{g'}(y), \quad y \in \R^d.   \]
Now, for a given pair $y,y' \in \R^d$, we have $u_{gg'}(y) \geq u_{gg'}(y')$ or $ u_{gg'}(y') \geq u_{gg'}(y)$. Suppose for the moment that the first inequality holds. In that case, 
\begin{equation}
 \beta_g(y)- \beta_{g'}(y) - \beta_{g}(y')+ \beta_{g'}(y')  \leq \psi_{g'}(y) - \psi_g(y) +  \psi_g(y')- \psi_{g'}(y')  .
\label{eqn:AuxCharacOptimal1}
\end{equation}
After simplification, item $ii.a)$ and \eqref{trans_{v}s_{e}xchange} imply
\begin{equation}
\psi_{g'}(y) - \psi_g(y) +  \psi_g(y')- \psi_{g'}(y') \leq - \langle y'-y , \nabla_y \beta_g(y) \rangle   -  \langle y-y' , \nabla_y \beta_{g'}(y') \rangle,
\label{eqn:AuxCharacOptimal2}
\end{equation}
for a.e. $y,y'$. Combining \eqref{eqn:AuxCharacOptimal1} and \eqref{eqn:AuxCharacOptimal2}, and recalling the definition of $u_{gg'}$ we obtain
 \begin{align}
 \begin{split}
|u_{gg'}(y) - u_{gg'}(y')| & \leq \beta_g(y') - ( \beta_{g}(y) + \langle y'-y , \nabla_{y}\beta_g (y) \rangle ) 
\\& + \beta_{g'}(y) - ( \beta_{g'}(y') + \langle y-y' , \nabla_{y} \beta_{g'}(y') \rangle ).
\end{split} 
\label{eqn:AuxCharacOptimal3}
 \end{align}
Notice that if instead $u_{gg'}(y') \geq u_{gg'}(y)$ we would have obtained the same inequality as the one above changing the roles of $g$ and $g'$ on the right hand side, so we do not lose generality in assuming the former inequality. Given that along every straight line $\ell$ the functions $\beta_g, \beta_{g'}$ are convex, their distributional second derivatives (along $\ell$) are characterized in terms of Radon positive measures, implying that along almost every line $\ell$ in $\R^d$ the right hand side in \eqref{eqn:AuxCharacOptimal3} is $O(|y-y'|)$, and in particular $u_{gg'}$ is a locally Lipschitz function along $\ell$. Furthermore, along almost every line in $\ell$ and for almost every $y,y'$ on that line, the right hand side of \eqref{eqn:AuxCharacOptimal3} is $o(|y-y'|)$ (given that Radon measures can only have at most a countable number of point masses). This implies that the locally Lipschitz function $u_{gg'}$ (restricted to $\ell$) has derivative a.e. equal to zero, thus implying that the function is constant along almost every line $\ell$. From this it follows that $u_{gg'}$ is almost everywhere constant in $\R^d$.  The bottom line is that for almost every $y,y' \in \R^d$ we have
\[ \left(\psi_{g'}(y') - \psi_{g}(y') - \psi_{g'}(y) + \psi_{g}(y)   \right) -  \left(\phi_{g'}(y') - \phi_{g}(y') - \phi_{g'}(y) + \phi_{g}(y)   \right) =0.   \]
From the above it now follows that 
\[  \int_{\R^d} \sum_{\tilde g}  (\psi(y,\tilde g) - \phi(y, \tilde g)) \veps(y,\tilde g) \hspace{1mm} dy=0 , \]
for $\veps$ as in \eqref{eqn:AuxTestFunctionsEE}. This concludes the proof.

\nc

\textbf{2.} We now show that \textit{ii. implies iii.} By Lemma \eqref{kernel_perp} we can find $\varphi_{1}\hspace{1mm}:\hspace{1mm}\mathbb{R}^{d}\rightarrow\mathbb{R}$ in $L^2_{loc}(\R^d)$ and $\varphi_{2}\hspace{1mm}:\hspace{1mm}\mathcal{G}\rightarrow\mathbb{R}$ such that 
\[ 
\phi_{g}(y)-\text{\ensuremath{\psi_{y}(g)}}=\varphi_{1}(y)+\varphi_{2}(g).
\]
Let us define 
\[ 
\varphi(y,g):=\phi_{g}(y)-\varphi_{2}(g)=\ensuremath{\psi_{y}(g)}+\varphi_{1}(y).
\]
Clearly, we have that
\[ 
\varphi(y,g^{\prime})-\varphi(y,g)=\ensuremath{\psi_{y}(g^{\prime})-\psi_{y}(g)}.
\]
Thus $ii.b),$ follows. On the other hand, since $\phi_{g}$ is $c$-concave, $\phi_{g}(\cdot)-\varphi_{2}(g)$ is $c$-concave too. Also, it is straightforward to verify that the superdifferential of $\phi_{g}(y)$ and $\phi_{g}(y)-\varphi_{2}(g)$ agree. In particular, $ii.a)$ holds for the potential $\varphi$. 


\textbf{3.} To prove  that $iii. \implies i.$,  let $(\tilde{\gamma},\tilde{h})$  be any element of $ADM(\mu,\sigma).$ Then, using item $ii.a)$,  $\eqref{discrete_continuity},$ $\eqref{ctrans_def},$ and $\eqref{sub_def},$ we have that
\begin{align*}\begin{aligned}C_{\tau} & (\gamma,h)=\sum_{g\in\mathcal{G}}\bigg[\int_{\mathbb{\mathbb{R}}^{d}\times\mathbb{\mathbb{R}}^{d}}c(x,y)d\gamma_{g}+\frac{\tau}{4}\sum_{g^{\prime}\in\mathcal{G}}\bigg(\int h_{gg^{\prime}}^{2}(y)K(g^{\prime},g)e^{-W}dy\bigg)\bigg]\\
 & =\sum_{g\in\mathcal{G}}\bigg[\int_{\mathbb{\mathbb{R}}^{d}\times\mathbb{\mathbb{R}}^{d}}(\varphi_{g}^c(x)+\varphi_{g}(y))d\gamma_{g}+\frac{\tau}{4}\sum_{g^{\prime}\in\mathcal{G}}\bigg(\int h_{gg^{\prime}}^{2}(y)K(g^{\prime},g)e^{-W}dy\bigg)\bigg]\\
 & =\sum_{g\in\mathcal{G}}\bigg[\int_{\mathbb{\mathbb{R}}^{d}}\varphi_{g}^cd\mu_{g}+\int_{\R^d}\varphi_gd\sigma_{g}+\tau\sum_{g^{\prime}\in\mathcal{G}}\int\bigg(\varphi_{g}(y)(h_{gg^{\prime}}(y))K(g^{\prime},g)e^{-W}\bigg)dy\\
 & \hspace{17em}+\frac{\tau}{4}\sum_{g^{\prime}\in\mathcal{G}}\bigg(\int h_{gg^{\prime}}^{2}(y)K(g^{\prime},g)e^{-W}dy\bigg)\bigg]\\
 & =\sum_{g\in\mathcal{G}}\bigg[\int_{\mathbb{\mathbb{R}}^{d}\times\mathbb{\mathbb{R}}^{d}}(\varphi_{g}^c(x) +\varphi_g(y))d\tilde{\gamma}_{g} +\tau\sum_{g^{\prime}\in\mathcal{G}}\bigg(\int\varphi_{g}(y)\big(h_{gg^{\prime}}(y)-\tilde{h}_{gg^{\prime}}(y)\big)K(g',g)e^{-W}dy\bigg)
 \\ & \hspace{17em}+\frac{\tau}{4}\sum_{g^{\prime}\in\mathcal{G}}\bigg(\int h_{gg^{\prime}}^{2}(y)K(g^{\prime},g)e^{-W}dy\bigg)\bigg]\\
 & \leq\sum_{g\in\mathcal{G}}\bigg[\int_{\mathbb{\mathbb{R}}^{d}\times\mathbb{\mathbb{R}}^{d}}c(x,y)d\tilde{\gamma}_{g} +\sum_{g^{\prime}\in\mathcal{G}}\frac{\tau}{2}\bigg(\int\big(\varphi_{g}(y)-\varphi_{g^{\prime}}(y)\big)\big([{h}_{gg^{\prime}}(y)-\tilde{h}_{gg^{\prime}}(y)]K(g^{\prime},g)e^{-W}dy\bigg)\\
 & \hspace{24em}+\frac{\tau}{4}\sum_{g^{\prime}\in\mathcal{G}}\bigg(\int h_{gg^{\prime}}^{2}(y)K(g^{\prime},g)e^{-W}dy\bigg)\bigg],
\end{aligned}
\end{align*}
where in the last line we have used the antisymmetry of $h$ and $\tilde h$. Now, from item $ii.b)$ and the above inequality we obtain 
\begin{align*}\begin{aligned}C_{\tau}(\gamma,h) & \leq\sum_{g}\int_{\mathbb{\mathbb{R}}^{d}\times\mathbb{\mathbb{R}}^{d}}c(x,y)d\tilde{\gamma}_{g}+\frac{\tau}{4}\sum_{g,g'}\bigg(\int \tilde h_{gg^{\prime}}^{2}(y)K(g^{\prime},g)e^{-W}dy\bigg)\\
& +\frac{\tau}{4}\sum_{g,g'}\bigg(\int ( h_{gg^{\prime}}^{2}(y) - \tilde h_{gg^{\prime}}^{2}(y))K(g^{\prime},g)e^{-W}dy\bigg)
\\& +\sum_{g,g'}\frac{\tau}{2}\bigg(\int\big(h_{gg^{\prime}}(y)\big)\big(\tilde{h}_{gg^{\prime}}(y)-h_{gg^{\prime}}(y)\big)K(g^{\prime},g)e^{-W}dy\bigg)\\
 & \leq C_{\tau}(\tilde{\gamma},\tilde{h}).
\end{aligned}
\end{align*}

\end{proof}

\nc

\section{Properties of JKO minimizers and maximum principle}
\label{sec:Prelim}

In this section we prove a series of preliminary results characterizing solutions to the optimization problem \eqref{def:OurJKO}. In Proposition \ref{Barriers} we show that the iterates of the minimizing movement scheme satisfy a maximum principle that is characteristic of the Fokker Plank equation. In Proposition \ref{A_step_EL} we show that the corresponding potential $\varphi$ generating the associated optimal transport map and optimal exchange function from Proposition \ref{opt_couplings} agrees with \eqref{formal_gradient}, i.e. with the formula for the gradient of $\EE$ suggested by the formal computation from section \ref{sec:Riemman}.\\

{\color{black}

{\color{black}
We begin by showing that minimizers of \eqref{def:OurJKO} exist.

\begin{lemma}\label{A_step_exixts}\textbf{(Existence of minimizers to \eqref{def:OurJKO}).} Let $\mu$ be a measure in $\mathcal{P}_{2}(\mathbb{R}^{d}\times\mathcal{G})$ with the property that $\mathcal{E}(\mu)<\infty$. Then, there exists a minimizer $\mu_{\tau}\in\mathcal{P}_{2}(\mathbb{R}^{d}\times\mathcal{G})$ of 
\begin{equation}\label{minmov}\sigma\rightarrow \EE(\sigma)+\mathcal{A}^{W,\mathcal{G},\tau}(\mu,\sigma).
\end{equation}
Moreover, such a minimizer is absolutely continuous with respect to the measure $dxdg$.
\end{lemma}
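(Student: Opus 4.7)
The plan is to apply the direct method of the calculus of variations to
\[ F(\sigma) := \EE(\sigma) + \mathcal{A}^{W,\G,\tau}(\mu, \sigma), \qquad \sigma \in \mathcal{P}_{2}(\R^{d}\times\G). \]
First I would observe that the infimum is finite and $F$ is bounded below. Testing with $\sigma = \mu$ and the admissible pair $\gamma_{g}=(\mathrm{Id},\mathrm{Id})_{\sharp}\mu_{g}$, $h \equiv 0$, one has $(\gamma, 0) \in ADM(\mu, \mu)$ with $C_{\tau}^{W,K}(\gamma, 0)=0$, hence $\mathcal{A}^{W,\G,\tau}(\mu,\mu)=0$ and $F(\mu) = \EE(\mu) < \infty$. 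For the lower bound, writing $\EE(\sigma) = \int f \log(f/\pi)\, dxdg - \log Z$ where $\pi = Z^{-1}e^{-V}$ is a reference probability (with $Z := \int e^{-V}\,dxdg$, finite under the hypotheses of Theorem \ref{main-result}), Jensen's inequality yields $\EE(\sigma) \geq -\log Z$; since $\mathcal{A}^{W,\G,\tau}(\mu,\cdot) \geq 0$ by construction, $F$ is bounded below.

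Next I would pick a minimizing sequence $\{\sigma_{n}\}$ together with near-optimal admissible pairs $(\gamma^{n}, h^{n}) \in ADM(\mu, \sigma_{n})$ satisfying $C_{\tau}^{W,K}(\gamma^{n}, h^{n}) \leq \mathcal{A}^{W,\G,\tau}(\mu,\sigma_{n}) + 1/n$. The resulting uniform bound on $C_{\tau}^{W,K}(\gamma^{n}, h^{n})$ gives equibounded second moments for each $\gamma_{g}^{n}$ (recall $\pi_{1\sharp}\gamma_{g}^{n}=\mu_{g}$ has a fixed finite second moment) and a uniform bound on $h^{n}$ in the Hilbert space $L^{2}_{W,K}(\R^{d}\times\G\times\G)$ of \eqref{L2Wgg}. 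By Prokhorov's theorem together with weak compactness in Hilbert space, along a subsequence (not relabeled) one has $\gamma_{g}^{n} \rightharpoonup \gamma_{g}$ narrowly in $\mathcal{P}_{2}(\R^{d}\times\R^{d})$ for every $g$, with $\pi_{1\sharp}\gamma_{g}=\mu_{g}$, and $h^{n} \rightharpoonup h$ weakly in $L^{2}_{W,K}$.

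I would then pass the admissibility relation \eqref{discrete_continuity} to the limit. For any $\zeta \in C_{c}(\R^{d})$, narrow convergence yields $\int \zeta\, d\pi_{2\sharp}\gamma_{g}^{n} \to \int \zeta\, d\pi_{2\sharp}\gamma_{g}$, while $\int \zeta\, h_{gg'}^{n} K(g,g') e^{-W} dx$ converges because $\zeta K(g,g') e^{-W/2}$ is an $L^{2}$ test function against the weakly convergent $h_{gg'}^{n} e^{-W/2}$. Defining $\sigma$ via the limit of the right-hand side of \eqref{discrete_continuity}, I obtain $\sigma_{n} \rightharpoonup \sigma$ and $(\gamma, h) \in ADM(\mu, \sigma)$. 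Narrow lower semicontinuity of the quadratic transport cost combined with weak lower semicontinuity of the Hilbert norm gives $C_{\tau}^{W,K}(\gamma, h) \leq \liminf_{n} C_{\tau}^{W,K}(\gamma^{n}, h^{n})$, and hence $\mathcal{A}^{W,\G,\tau}(\mu,\sigma) \leq \liminf_{n} \mathcal{A}^{W,\G,\tau}(\mu,\sigma_{n})$.

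The main obstacle is the narrow lower semicontinuity of $\EE$ along $\sigma_{n} \rightharpoonup \sigma$, since the potential term $\int V\, d\sigma$ is not weakly continuous when $V$ is unbounded. The fix is the same rewriting $\EE(\sigma) = \int f \log(f/\pi)\, dxdg - \log Z$ used above, which identifies $\EE$ (up to an additive constant) with the relative entropy of $\sigma$ with respect to the fixed reference probability $\pi$, a functional that is classically narrowly lower semicontinuous. Combining with the previous step yields $F(\sigma) \leq \liminf_{n} F(\sigma_{n}) = \inf F$, so $\sigma$ is a minimizer. Absolute continuity of $\sigma$ with respect to $dxdg$ is then immediate: since $\mathcal{A}^{W,\G,\tau}(\mu,\sigma) \geq 0$, minimality forces $\EE(\sigma) \leq F(\mu) < \infty$, and by the very definition \eqref{def:RelativeEntropy} this requires $\sigma \ll dxdg$.
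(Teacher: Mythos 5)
Your proposal is correct and follows essentially the same route as the paper: the direct method applied to a minimizing sequence of admissible pairs, weak/narrow compactness from the equibounded second moments and the $L^{2}_{W,K}$ bound, passage to the limit in the linear constraint \eqref{discrete_continuity}, and lower semicontinuity of both the cost and the relative entropy. You are somewhat more careful than the paper on three points it leaves implicit — the lower bound on $\EE$ via the reference measure $\pi=Z^{-1}e^{-V}$, the use of near-optimal rather than optimal pairs (which avoids invoking Lemma \ref{coup_exists}), and the explicit argument for absolute continuity of the minimizer — but these are refinements of the same argument, not a different one.
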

\begin{proof} 
Since the entropy of $ \mu $ is finite, by considering the competitor $\sigma=\mu$ we deduce that the infimum in \eqref{minmov} if finite as well. Now, consider a minimizing sequence of measures $\{\sigma^{n}\}_{n=1}^{\infty},$  with corresponding optimal pairs $\{(\gamma^{n},h^{n})\}_{n=1}^{\infty}$ in $ADM(\mu,\sigma^{n}).$ Then, by construction, the second moments of $\ensuremath{\{\gamma^{n}\}_{n=1}^{\infty}}$ and the norm of $\{h^{n}\}_{n=1}^{\infty}$ in the weighted space $ L_{W}^{2}(\mathbb{R}^{d}\times\mathcal{G\times\mathcal{G}})$ are equibounded. Thus, following the argument of Lemma \ref{coup_exists} we can guarantee the existence of a pair $(\gamma,h)$ such that up to subsequence not relabeled, $\gamma^n$ converges weakly to $\gamma$, $h^n$ converges weakly (in $L_W^2$) to $h$ and
\[
\liminf_{n\rightarrow\infty}C_{\tau}(\gamma^{n},h^{n})\geq C_{\tau}(\gamma,h).
\]
From  
\[
\sigma_{g}^{n}=\pi_{2\#}\gamma_{g}^n-\tau\sum_{g'}h^n_{gg^{\prime}}(x)K(g,g^{\prime})e^{-W},
\]
and the weak convergence of the sequences $\{(\gamma^{n},h^{n})\}_{n=1}^{\infty}$, we deduce that
\begin{align*}\begin{aligned}\mu_{\tau} & :=\lim_{n\rightarrow \infty}\pi_{2\#}\gamma_{g}^{n}-\tau\sum_{g'}h_{gg^{\prime}}^{n}(x)K(g,g^{\prime})e^{-W}\\
 & \hspace{2em}=\pi_{2\#}\gamma_{g}-\tau\sum_{K(g^{\prime},g)>0}h_{gg^{\prime}}(x)K(g,g^{\prime})e^{-W}.\\
\end{aligned}
\end{align*}
Consequently, the pair $(\gamma,h)$ belongs to $\text{AMD}(\mu,\mu_\tau)$. Finally,
the inequality
\[\liminf_{n\rightarrow\infty}\mathcal{E}(\sigma^{n})\geq \mathcal{E}(\mu_\tau),\ 
\]
is a consequence of the weak convergence of $\sigma^{n}$ towards $\mu_\tau$ and the weak lower semi continuity of the relative entropy. The desired result follows.
\end{proof}
}



{\color{black}
 In the next lemma we prove a set of variational inequalities satisfied by minimizers of \eqref{def:OurJKO}. These inequalities are the main ingredient necessary to attain the main results of this section, i.e. Propositions \ref{Barriers} and \ref{A_step_EL}. We obtain these inequalities by computing the first variation of minimizing pairs under suitable perturbations. 
 

\begin{proposition}\textbf{(Variational inequalities of JKO minimizers)}.\label{min_prop} Let $\mu$ and $\mu_\tau$ be as in Lemma \ref{A_step_exixts}, and let $f_\tau$ be $\mu_\tau$'s density. Let $\{ \gamma_g\}_g$ and $h$ be the optimal transport plans and optimal exchange functions for the static semi-discrete optimal transport between $\mu$ and $\sigma=\mu_\tau$. The following inequalities hold: 
\begin{itemize}
\item Let $y\in\mathbb{R}^d$ be a Lebesgue point for the function $h_{g_1g_2}$ where $K(g_1,g_2)>0$, and suppose that $(y,g_{2})$ is an element in the support of $f_{\tau}$. Then,
\begin{equation}\label{E_stop_exchange}
\log f_{\tau}(y,g_{1})+V(y,g_{1})-[\log f_{\tau}(y,g_{2})+V(y,g_{2})]\geq h_{g_{1}g_{2}}(y).
\end{equation}
\item Let $y_{1}$ be a Lebesgue point for $S_{g}$ and suppose that $(y_{1},g),(y_{2},g)$ belong to the support of $f_\tau$. Then, 
\begin{equation}\label{E_stop_transport}
\log f_{\tau}(y_{2},g)+V(y_{2},g)-[\log f_{\tau}(y_{1},g)+V(y_{1},g)]+\frac{|S_{g}(y_{1})-y_{2}|^{2}}{2\tau}\geq\frac{|S_{g}(y_{1})-y_{1}|^{2}}{2\tau}.
\end{equation}
\item
Let $(x,y)$ be an element in the support of $\gamma_{g}$ for some $g$ in $\mathcal{G},$ and suppose that $x$ and $y$ belong to the support of $f_{\tau,g}$. Then, 
\begin{equation}\label{trans_eq}
\log f_{\tau}(x,g) + V(x,g)-[\log f_{\tau}(y,g) + V(y,g)]\geq\frac{|x-y|^{2}}{2\tau}.
\end{equation}
\nc
\end{itemize}
\end{proposition}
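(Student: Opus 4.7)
The plan is to derive each inequality by constructing an admissible perturbation of the optimal pair $(\gamma, h)$ for the static semi-discrete transport between $\mu$ and $\mu_\tau$, and using optimality of $\mu_\tau$ for the JKO problem \eqref{def:OurJKO} to conclude that the first variation of $\sigma \mapsto \EE(\sigma) + \mathcal{A}^{\G,W,\tau}(\mu,\sigma)$ along that perturbation is non-negative. The argument parallels the variational computations of Lemma \ref{perturba}, but one now must include the first variation of the relative entropy $\EE$ in addition to that of the transport cost, and admissibility is only imposed on the first marginal of $\gamma$ (which must remain equal to $\mu$) since $\sigma$ itself is the variable to be optimized.

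For \eqref{E_stop_exchange} we perturb only the exchange function. With $g_1, g_2$ such that $K(g_1,g_2)>0$, $y$ a Lebesgue point of $h_{g_1g_2}$, and $(y,g_2)$ in the support of $f_\tau$, set
\[ \delta h_{g_1g_2}(z) := -\frac{\veps}{\tau K(g_1,g_2) e^{-W(z)}} \mathds{1}_{B_r(y)}(z), \qquad \delta h_{g_2g_1}(z) := -\delta h_{g_1g_2}(z), \]
and leave $\gamma$ and the other entries of $h$ unchanged. By \eqref{discrete_continuity} the corresponding new second marginal has density $f_\tau + \veps \mathds{1}_{B_r(y)}$ at $g_1$ and $f_\tau - \veps \mathds{1}_{B_r(y)}$ at $g_2$; the support hypothesis at $(y,g_2)$ ensures non-negativity for $\veps$ small. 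Expanding $\EE$ to first order gives an entropy variation
\[ \veps \int_{B_r(y)} \bigl[ \log f_\tau(z,g_1) + V(z,g_1) - \log f_\tau(z,g_2) - V(z,g_2) \bigr] dz + O(\veps^2), \]
while only the $(g_1,g_2)$ and $(g_2,g_1)$ summands of the exchange cost vary, yielding $-\veps \int_{B_r(y)} h_{g_1g_2}(z) dz + O(\veps^2 r^{-d})$. Adding, dividing by $\veps$, sending $\veps \to 0$, then dividing by $|B_r(y)|$ and applying the Lebesgue differentiation theorem at $y$ gives \eqref{E_stop_exchange}.

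For \eqref{E_stop_transport} and \eqref{trans_eq} we instead perturb the transport plan $\gamma_g$ while keeping $h$ fixed, so that any change of $\pi_{2\sharp}\gamma_g$ passes directly to $\sigma_g$ via \eqref{discrete_continuity}. The construction is a mass-redirection of the type used in step 2 of the proof of Lemma \ref{perturba}: a portion of mass $\veps$ of the plan, originally landing in a ball near $y_1$ (respectively near $y$), is redirected to land in a ball near $y_2$ (respectively near $x$), via auxiliary quadratic optimal plans between the suitably translated, localized second-marginals. Since we only modify the second coordinate of $\gamma_g$, the first marginal $\mu_g$ is preserved, so the new $\gamma_g^{r,\veps}$ is admissible. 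To leading order in $\veps$, the transport cost gains
\[ \veps\,\tfrac{|S_g(y_1)-y_2|^2 - |S_g(y_1)-y_1|^2}{2\tau} \qquad \bigl(\text{resp. } -\veps\,\tfrac{|x-y|^2}{2\tau}\bigr), \]
while the entropy variation produces
\[ \veps\bigl[ \log f_\tau(y_2,g) + V(y_2,g) - \log f_\tau(y_1,g) - V(y_1,g) \bigr], \]
respectively $\veps\bigl[ \log f_\tau(x,g) + V(x,g) - \log f_\tau(y,g) - V(y,g) \bigr]$. The non-negativity of the sum, combined with the Lebesgue point and support hypotheses after sending $\veps \to 0$ and then $r \to 0$, yields \eqref{E_stop_transport} and \eqref{trans_eq} respectively.

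The main technical obstacle is preserving admissibility along each perturbation: non-negativity of the perturbed density (handled by the support assumptions and taking $\veps$ small), correct marginals of $\gamma^{r,\veps}$, antisymmetry of $h^{r,\veps}$, and $L^2_{W,K}$ integrability. A secondary technicality is the ordering of the two limit parameters, $\veps \to 0$ before $r \to 0$, which is required so that second-order error terms of order $\veps^2 / |B_r|$ do not survive the subsequent localization; this is managed exactly as in the Lebesgue point argument of Lemma \ref{perturba}.
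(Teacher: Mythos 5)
Your overall strategy --- perturb the optimal pair, keep the first marginals equal to $\mu$, and read off the first variation of $\EE(\sigma)+\mathcal{A}^{\G,W,\tau}(\mu,\sigma)$ --- is exactly the paper's, and your treatment of \eqref{E_stop_transport} and \eqref{trans_eq} via redirected plans matches what the paper does (the paper in fact gets \eqref{trans_eq} for free by setting $y_1=y$ and $y_2=x=S_g(y_1)$ in \eqref{E_stop_transport}, so your third perturbation is redundant). The genuine gap is in your argument for \eqref{E_stop_exchange}: you choose the additive perturbation $\delta h_{g_1g_2}=-\tfrac{\veps}{\tau K(g_1,g_2)e^{-W}}\mathds{1}_{B_r(y)}$, so the competitor densities are $f_{\tau,g_1}+\veps\mathds{1}_{B_r(y)}$ and $f_{\tau,g_2}-\veps\mathds{1}_{B_r(y)}$, and you assert that the support hypothesis at $(y,g_2)$ "ensures non-negativity for $\veps$ small." That is false: membership of $(y,g_2)$ in the support of $f_\tau$ only says every neighborhood of $y$ carries positive mass; it gives no positive lower bound for $f_{\tau,g_2}$ on any ball $B_r(y)$, so $f_{\tau,g_2}-\veps$ can be negative (and the competitor inadmissible) on a set of positive measure no matter how small $\veps$ is. The same defect kills the entropy expansion: $\ee(f-\veps)-\ee(f)=-\veps(\log f+1+V)+O(\veps^2)$ is only valid where $f$ is bounded away from $0$ relative to $\veps$, and $\log f_{\tau,g_2}$ may fail to be integrable on $B_r(y)$.

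The paper's fix is to make the removal multiplicative rather than additive: take $\delta h_{g_1g_2}=-\tfrac{\veps\, f_{\tau,g_2}(\hat y)}{\tau K(g_1,g_2)e^{-W(\hat y)}}$ on $B_r(y)$, so the competitor densities become $f_{\tau,g_1}+\veps f_{\tau,g_2}$ and $(1-\veps)f_{\tau,g_2}$, which are automatically non-negative for $\veps<1$, and the first-order entropy term $-\veps f_{\tau,g_2}(\log f_{\tau,g_2}+1+V_{g_2})$ is well defined even where $f_{\tau,g_2}$ vanishes. The localization at the end is then performed by dividing by $\int_{B_r(y)}f_{\tau,g_2}\,d\hat y$ (positive precisely because $(y,g_2)$ lies in the support) rather than by $|B_r(y)|$; note that with your normalization you would in any case need $y$ to be a Lebesgue point of the entropy integrand, which is not among the hypotheses. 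You should rewrite the \eqref{E_stop_exchange} perturbation in this weighted form; the rest of your outline then goes through.
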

\begin{proof}
Let us start with a small outline describing the main ideas behind the proof.
	
\textbf{Heuristic Proof:} We begin by proving \eqref{E_stop_exchange}. For this purpose we consider the following perturbation of the optimal pair $(\gamma,h)$. The idea is to stop exchanging a small amount of mass between $\ensuremath{(y,g_{1})}$ and $\ensuremath{(y,g_{2})}.$ By doing this we save
\[
h_{g_{1}g_{2}}(y)+\log f_{\tau}(y,g_{2}) +1 +V(y,g_{2}),
\]
in terms of the mass exchange cost and the entropy, and we pay an extra
\[
\log f_{\tau}(y,g_{1})+1+ V(y,g_{1}),
\]
in terms of the entropy of the excess mass we now have in $(y,g_{1}).$ Thus, \eqref{E_stop_exchange} follows by optimality.\\

We proceed to the proof of \eqref{E_stop_transport}. We perturb $\gamma_{g}$ as follows. Instead of transporting a small amount of the mass from $(S_{g}(y_{1}),g)$ into $(y_{1},g)$ we transport it to $\ensuremath{(y_{2},g)}.$ By doing this, we create a transport cost differential
\[\frac{|y_{2}-S_{g}(y_{1})|^{2}}{2\tau}-\frac{|y_{1}-S_{g}(y_{1})|^{2}}{2\tau}.
\]
The resulting excess mass in  $(y_{2},g)$ and deficit of mass in $(y_{1},g)$ create an entropy differential of 
\[\log f_{\tau}(y_2,g)+V(y_2,g)-[\log f_{\tau}(y_1,g)+V(y_1,g)].
\]
Thus, \eqref{E_stop_transport} follows by optimality.\\

Finally, to prove \eqref{trans_eq} we take a pair $(x,y) $ in the support of $\gamma_{g}$ where both $x,y$ are assumed to belong to the support of $f_{\tau,g}$. Now, by setting  $y=y_{1}$ and $x=S(y_{1})=y_{2}$ in inequality \eqref{E_stop_transport} we have
\[ \log f_{\tau}(x,g)+V(x,g)-[\log f_{\tau}(y,g)+V(y,g)]\geq\frac{|y-x|^{2}}{2\tau}. \] 

\textbf{Rigorous proof:} We only prove \eqref{E_stop_exchange}. The proof of \eqref{E_stop_transport} follows exactly as in Proposition 3.7 from \cite{F-G} and the proof of \eqref{trans_eq} follows the same lines as Lemma \ref{perturba}.

Let $y\in\mathbb{R}^d$ be a Lebesgue point for the function $h_{g_1g_2}$ and suppose that $(y,g_{2})$ is an element in the support of $f_{\tau}$. Let $r$ and $\varepsilon$ be positive numbers.  We perturb the minimizing pair $(\gamma,h)$ by considering the new mass exchange function $\text{\ensuremath{h_{g_1g_2}^{r,\varepsilon}}\ensuremath{\hspace{1mm}:\hspace{1mm}\mathbb{R}^{d}\rightarrow\mathbb{R}} }$ defined by 
\[
h_{g_{1}g_{2}}^{r,\varepsilon}(\hat y)=\begin{cases}
h_{g_{1}g_{2}}(\hat y), & \text{if\hspace{1mm}}\hat y\in B_{r}^{c}(y)\\
h_{g_{1}g_{2}}(\hat y)-\frac{\varepsilon f_{\tau,g_{2}}(\hat y)}{\tau K(g_{1},g_{2})e^{-W(\hat y)}} & \text{if\hspace{1mm}} \hat y\in B_{r}(y),\end{cases}\]
$h_{g_2g_1}^{r,\veps}:= - h_{g_2g_1}^{r,\veps}$ and $ h_{gg^{\prime}}^{r,\varepsilon}=h_{gg^{\prime}}$ whenever $(g,g^{\prime})$ is not $(g_{1},g_{2})$ or $(g_{2},g_{1}).$
Observe that this produces a competitor $\mu_{\tau}^{r,\varepsilon}$ whose densities are given by
\[
f_{\tau,g_{1}}^{r,\varepsilon}(\hat y)=\begin{cases}
f_{\tau,g_{1}}(\hat y), & \text{if\hspace{1mm}} \hat y\in B_{r}(y)^{c}\\
f_{\tau,g_{1}}(\hat y)+\varepsilon f_{\tau,g_{2}}(\hat y) & \text{if\hspace{1mm}} \hat y\in B_{r}(y)
\end{cases},\text{\ensuremath{\hspace{1em}f_{\tau,g_{2}}^{r,\varepsilon}(\hat y)=\begin{cases}
f_{\tau,g_{2}}(\hat y), & \text{if\hspace{1mm}}\hat y\in B_{r}(y)^{c}\\
(1-\varepsilon)f_{\tau,g_{2}}(\hat y) & \text{if\hspace{1mm}}\hat y\in B_{r}(y),
\end{cases}}}
\]
and $f_{\tau,g}=f_{\tau}^{r,\varepsilon}$ whenever $g$ is not $g_{1}$ or $g_{2}.$ From the minimality of $\mu_\tau$ we get that 
\[
\sum_g\int_{\R^d} \ee(f_{\tau},\hat y , g)d\hat{y}+\mathcal{C}_{\tau}(\gamma,h)\leq \sum_g\int \ee(f_{\tau}^{r,\varepsilon},\hat y, g)d\hat y +\mathcal{C}_{\tau}(\gamma,h^{r,\varepsilon}),
\]
which simplifies to
\begin{align*}\begin{split}&\int_{B_r(y)}\bigg[  \ee\big(f_{\tau,g_{1}}(\hat y),\hat y, g_1\big)+
\ee\big(f_{\tau,g_{2}}(\hat y),\hat y , g_2\big)+\frac{\tau}{2}h_{g_{1}g_{2}}^{2}(\hat y)K(g_{1},g_{2})e^{-W(\hat y)}\bigg]d \hat y\\
 & \leq\int_{B_r(y)} \bigg[\ee\big(f_{\tau,g_{1}}(\hat y)+\varepsilon f_{\tau,g_{2}}(\hat y),\hat y , g_1\big)+\ee\big((1-\varepsilon)f_{\tau,g_{2}}(\hat{y}),\hat y , g_2\big)
 \\&+\frac{\tau }{2}\bigg(h_{g_{1}g_{2}}(\hat y)
 -\frac{\varepsilon f_{\tau,g_{2}}(\hat y) }{\tau K(g_{1},g_{2})e^{-W(\hat y)}}\bigg)^{2} K(g_{1},g_{2})e^{-W(\hat y)}\bigg]d\hat y.
\end{split}
\end{align*}
Reordering terms, we obtain
\begin{align*}\begin{split}&\int_{B_r(y)}\bigg[  \ee\big(f_{\tau,g_{1}}(\hat y),\hat y , g_1\big)-\ee\big(f_{\tau,g_{1}}(\hat y)+\varepsilon f_{\tau,g_{2}}(\hat y),\hat y , g_1\big) \bigg]d \hat y  
\\&\leq\int_{B_r(y)} \bigg[\ee\big((1-\varepsilon)f_{g_{2}}(\hat y),\hat y, g_2 \big)-\ee\big(f_{\tau,g_{2}}(\hat y), \hat y, g_2\big)\\
 & +\frac{\tau }{2}\bigg[\bigg(h_{g_{1}g_{2}}(\hat y)
 -\frac{\varepsilon f_{\tau,g_{2}}(\hat y) }{\tau K(g_{1},g_{2})e^{-W(\hat y)}}\bigg)^{2}  - h^2_{g_1g_2}(\hat y)\bigg] K(g_{1},g_{2})e^{-W(\hat y)}\bigg]d\hat y.
\end{split}
\end{align*}
Dividing by $\varepsilon$ and letting $\varepsilon\rightarrow 0$ yields
\begin{align*}\begin{split} & \int_{B_{r}(y)}\bigg[- \log f_{\tau,g_1}(\hat y) - 1 - V(\hat y, g_1)  \bigg]f_{\tau,g_{2}}(y)dy\\
 & \leq\int_{B_{r}(y)}\bigg[ - \log f_{\tau,g_2}(\hat y) - 1 - V(\hat y, g_2) - h_{g_{1}g_{2}}(\hat y)\bigg]f_{\tau,g_{2}}(\hat y)d\hat y.\\
\end{split}
\end{align*}
Dividing by $\int_{B_r(y)} f_{\tau,g_{2}}(\hat y) d \hat y$, and letting $ r\rightarrow0$ we obtain the desired inequality.
\end{proof}
}

 In the next proposition we prove that minimizers of \eqref{def:OurJKO} satisfy a maximum principle that is characteristic of Fokker Planck equations.

{\color{black}

\begin{proposition}\label{Barriers}
{\textbf{(Consistent barriers)}}
Suppose that $\mu$ and $\mu_\tau$ are as in Lemma \ref{A_step_exixts}. Suppose in addition that $\mu$'s density satisfies:
\[
\lambda e^{-V(x,g)}\leq f(x,g)\leq\Lambda e^{-V(x,g)},\hspace{1em}
\]
for every $(x,g).$ Then, $f_\tau$ satisfies 
\begin{equation}\label{gaussians}
\lambda e^{-V(x,g)}\leq f_{\tau}(x,g)\leq\Lambda e^{-V(x,g)},
\end{equation}
as well.
\end{proposition}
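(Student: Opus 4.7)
The plan is to run a perturbation/contradiction argument in the spirit of Proposition~3.7 of \cite{F-G}, adapted so as to accommodate the new mass-exchange ingredient that lives on the graph $\G$. I will treat the upper bound in detail; the lower bound follows by a symmetric construction.

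Introduce the ``relative potential'' $F_\tau(y,g):=\log f_\tau(y,g)+V(y,g)$, defined on $\{f_\tau>0\}$, so that $f_\tau\leq\Lambda e^{-V}$ is equivalent to $F_\tau\leq\log\Lambda$, and similarly for the lower bound. The hypothesis on $f$ says exactly that $\log\lambda\leq F_0\leq\log\Lambda$ on the support of $f_0=f$. Assume for contradiction that $A:=\{(y,g)\in\operatorname{supp}(f_\tau):F_\tau(y,g)>\log\Lambda\}$ has positive $dx\otimes dg$-measure. By Lemma~\ref{coup_exists}, Proposition~\ref{opt_couplings} and the upcoming Proposition~\ref{A_step_EL}, we know that the optimal pair $(\gamma,h)$ realizing $\mathcal{A}^{\G,W,\tau}(\mu,\mu_\tau)$ is driven by a single scalar potential $\varphi$ on $\R^d\times\G$ that, up to an additive function of $y$, coincides with $F_\tau$. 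Consequently, the mass-exchange identities \eqref{mass_A1} together with \eqref{E_stop_exchange} and \eqref{trans_eq} give sharp information: on the intersection of the supports the exchange $h_{gg'}(y)$ equals the jump of $F_\tau$ across neighbors, and the Brenier-type map $T_g$ transports source mass from $x$ to $y$ with $F_\tau(x,g)-F_\tau(y,g)\geq |x-y|^2/(2\tau)$.

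I would then use these identities to construct a competitor $\tilde\sigma=\tilde f\,dx\,dg\in\P_2(\R^d\times\G)$ strictly better than $\mu_\tau$. The idea is to ``cap'' $f_\tau$ from above by $\Lambda e^{-V}$ on $A$ and to reallocate the excess mass to a set $A^c\subset\operatorname{supp}(f_\tau)\setminus A$ on which $F_\tau\leq\log\Lambda-\delta$ for some $\delta>0$; such a set exists because the total mass is one and the strict inequality $F_\tau>\log\Lambda$ would otherwise be incompatible with the assumption on $f$ through the continuity constraint \eqref{discrete_continuity}. For $\veps>0$ small, I build an admissible pair $(\tilde\gamma,\tilde h)\in ADM(\mu,\tilde\sigma)$ by perturbing $(\gamma,h)$ jointly: the $\R^d$-transport plan $\gamma_g$ is modified so as to drain $\veps$-mass off $A$ and deposit it on $A^c$ along the relevant fibers, while the exchange function $h$ is simultaneously adjusted by solving a discrete Poisson equation on $(\G,K)$ of the kind guaranteed by Proposition~\ref{lemm:GraphPDE}, so that \eqref{discrete_continuity} remains satisfied at the perturbed level.

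Computing the first variation of $\EE+\mathcal{A}^{\G,W,\tau}$ along this perturbation, the entropy contribution is, to leading order,
\[
\veps\int (F_\tau+1)\,\zeta\,dx\,dg=\veps\int F_\tau\,\zeta\,dx\,dg,
\]
where $\zeta\leq 0$ on $A$, $\zeta\geq 0$ on $A^c$, and $\int\zeta\,dx\,dg=0$; by construction this quantity is bounded above by $-\veps\delta\int_{A^c}\zeta$, hence strictly negative. The change in the transport+exchange cost is computed via \eqref{E_stop_exchange} and \eqref{trans_eq}: on the support of $f_\tau$ these variational inequalities are in fact \emph{equalities} (both ``stopping'' and ``adding'' perturbations are admissible there), so the transport-cost first variation exactly cancels against the piece of the entropy variation coming from the equality region, leaving behind only the strictly negative contribution produced by the jump from $A$ (where $F_\tau>\log\Lambda$) to $A^c$ (where $F_\tau\leq\log\Lambda-\delta$). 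This contradicts the minimality of $\mu_\tau$ in \eqref{def:OurJKO} and forces $|A|=0$. The bound $f_\tau\geq\lambda e^{-V}$ is obtained by the mirror argument applied to $B:=\{(y,g)\in\operatorname{supp}(f_\tau):F_\tau(y,g)<\log\lambda\}$.

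The main obstacle is the joint management of the two perturbations: unlike the purely Euclidean setting of \cite{F-G}, here one cannot perturb $\gamma$ alone without breaking \eqref{discrete_continuity}, so the exchange function $h$ has to be updated coherently, and one must verify that the perturbed pair stays in $ADM(\mu,\tilde\sigma)$ (in particular that the resulting densities remain non-negative, which uses that we are perturbing points in the interior of the support). The characterization of the optimal potential as a single scalar $\varphi$ (Proposition~\ref{opt_couplings}, $iii.$) is what makes this coupling feasible, because it allows one to express the perturbation of $h$ in terms of a perturbation of $\varphi$ compatible with the $\R^d$-transport adjustment, and the variational inequalities \eqref{E_stop_exchange}--\eqref{trans_eq} supply exactly the quantitative bookkeeping needed to conclude.
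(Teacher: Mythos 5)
Your proposal has two genuine gaps. First, it is circular: you invoke Proposition \ref{A_step_EL} (the single-potential/Euler--Lagrange characterization) and the claim that \eqref{E_stop_exchange} and \eqref{trans_eq} hold as \emph{equalities} on the support of $f_\tau$. In the paper that equality, i.e. \eqref{E-LA1}, and Proposition \ref{A_step_EL} itself are proved \emph{after} the maximum principle and depend on it through Lemma \ref{pos_trans} (one needs $f_\tau>0$ and $\bar f_\tau>0$ everywhere, which is deduced from the barriers). At the stage where Proposition \ref{Barriers} must be proved, the only available tools are Lemma \ref{A_step_exixts} and the one-sided variational inequalities of Proposition \ref{min_prop}.

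Second, the competitor construction does not close. The asserted ``exact cancellation'' of the transport-cost first variation against the entropy variation is unjustified: redirecting $\veps$-mass from $A$ to a set $A^c$ on which $F_\tau\leq\log\Lambda-\delta$ changes the quadratic cost by terms of the form $\frac{|S_g(y)-y'|^2}{2\tau}-\frac{|S_g(y)-y|^2}{2\tau}$, which can be arbitrarily large if $A$ and $A^c$ are far apart, and nothing in your argument controls this. The idea you are missing is that no new competitor is needed. Since $\int\Lambda e^{-V}\,dx\,dg\geq\int f\,dx\,dg=\mu_\tau(\R^d\times\G)$ (resp.\ $\mu\geq\mu_\lambda$ for the lower bound), a set $A$ on which the barrier is violated satisfies $\mu(A)>\mu_\tau(A)$, so $A$ must \emph{lose mass under the optimal pair itself}: either some $S_g$ maps a point of $A$ to a point outside $A$ along an $\R^d$-fiber, or some exchange $h_{gg'}>0$ pushes mass from $A$ to $A^c$ along the graph. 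Evaluating \eqref{E_stop_transport} (resp.\ \eqref{E_stop_exchange}) at that leakage point gives $\log\lambda$ (or $\log\Lambda$) strictly on both sides with opposite strict inequalities, a contradiction. This mass-balance dichotomy replaces your entire perturbation scheme and avoids both difficulties above.
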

\begin{proof} 
We only prove the lower bound in \eqref{gaussians} since the argument for the upper bound is completely analogous. Let us define the set 
\[A:=\{(x,g) \: : \:\lambda e^{-V(x,g)}>f_{\tau}(x,g)\},
\]
and consider the auxiliary positive measure
\[
d\mu_{\lambda}=\lambda e^{-V(x,g)}dxdg.
\]
Suppose for the sake of contradiction that 
\[\mu_{\lambda}(A)>0.\ 
\]
Then 
\[\mu(A)\geq\mu_{\lambda}(A)>\mu_{\tau}(A),
\]
and thus the set $A$  has to lose mass during the transportation. Consequently, at least one of the following facts should hold:\\
\begin{itemize}
\item[\textit{i.}] There exist $g \in \G$ and $y$ a Lebesgue point of $S_{g}$ such that $(S_g(y),g)\in A$ and $(y,g)\not\in A.$
\item[\textit{ii.}] There exist a pair of nodes $g,g^{\prime}$ with $K(g,g^{\prime})>0$ and $x$ a density point of $h_{gg^{\prime}}$ for which $(x,g)$ and $(x,g')$ belong to the support of $f_\tau$, $h_{gg'}(x)>0$, $(x,g)\in A$ and $(x,g^{\prime})\not\in A$.
\end{itemize}
Let us show that in both cases we reach a contradiction.

Case \textit{i:} In this case, we apply \eqref{E_stop_transport} with $y_{1}=y$ and $y_{2}=S_g(y)$ to obtain that 
\[\log f_{\tau}(y,g)+V(y,g)+\frac{1}{2\tau}|S_g(y)-y|^{2}\leq\log f_{\tau}( S_g(y) ,g)+V( S_g(y),g).
\]
Now, observe that the assumption that $(y,g)\not\in A$  implies that the left-hand side of the above inequality is bigger than $\log\lambda,$ whereas the assumption that $(S_g(y),g)\in A$ implies the right-hand side is strictly smaller than $\log\lambda. $ Thus, we reach a contradiction.\\

 Case \textit{ii:} In this case we apply \eqref{E_stop_exchange} with $g_{2}=g^{\prime},$ $g_{1}=g$ and $y=x,$ to obtain that
\[0<h_{gg^{\prime}}(x)\leq\log f_{\tau}(x,g)+V(x,g)-\log f_{\tau}(x,g^{\prime})-V(x,g^{\prime}).\ 
\]
Moreover, our assumption that $(x,g^{\prime})\not\in A$ and $(x,g)\in A$ implies that the right hand side is negative. Thus, we reach a contradiction. 
\end{proof}

}

 As a byproduct of the above proposition, we obtain a uniform control on the distance traveled by the transported mass.

\color{black}
\begin{lemma}\textbf{(Transportation bound)}\label{trans_bound} Let 
$\mu$, $\mu_\tau$, $\lambda,$ and $\Lambda$ be as in Proposition \ref{Barriers}.
Then, there exists $C>0$ such that for all $g \in \G$
\[
| y-x|\leq C\sqrt{\tau}\hspace{1em}\forall(x,y)\in supp(\gamma_g),
\]
where we recall $\gamma=\{\gamma_g \}_{g \in \G}$ is the set of optimal plans between $\mu$ and $\mu_\tau$. The constant $C$ can be taken to be $C= \sqrt{2}(\log(\Lambda)- \log(\lambda))$.
\end{lemma}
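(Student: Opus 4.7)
The proof is a direct application of the variational inequality \eqref{trans_eq} from Proposition \ref{min_prop} together with the two-sided barrier bounds of Proposition \ref{Barriers}. The plan is as follows.

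First I would fix $g \in \G$ and an arbitrary pair $(x,y) \in \operatorname{supp}(\gamma_g)$. In order to invoke \eqref{trans_eq} I need to verify that both $x$ and $y$ lie in the support of $f_{\tau,g}$. This is where Proposition \ref{Barriers} enters: since $\mu$ satisfies the hypothesis of that proposition by assumption, we conclude $f_\tau(z,g) \geq \lambda e^{-V(z,g)} > 0$ for almost every $z \in \R^d$, so the density $f_{\tau,g}$ is (a.e.) strictly positive, and in particular $x$ and $y$ belong to its support. (Working with an a.e.\ representative causes no difficulty because $(x,y) \in \operatorname{supp}(\gamma_g)$ forces $x$ to be in the support of the first marginal $\mu_g$ and $y$ to be in the support of the second marginal $\pi_{2\sharp}\gamma_g$, both of which have densities that are strictly positive a.e.)

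Next I would apply \eqref{trans_eq}, which yields
\[
\frac{|x-y|^2}{2\tau} \;\leq\; \bigl[\log f_\tau(x,g) + V(x,g)\bigr] - \bigl[\log f_\tau(y,g) + V(y,g)\bigr].
\]
The upper barrier $f_\tau(x,g) \leq \Lambda e^{-V(x,g)}$ gives $\log f_\tau(x,g) + V(x,g) \leq \log \Lambda$, and the lower barrier $f_\tau(y,g) \geq \lambda e^{-V(y,g)}$ gives $\log f_\tau(y,g) + V(y,g) \geq \log \lambda$. Combining these two estimates with the displayed inequality yields
\[
\frac{|x-y|^2}{2\tau} \;\leq\; \log\Lambda - \log\lambda,
\]
so that $|x-y| \leq \sqrt{2(\log\Lambda - \log\lambda)}\,\sqrt{\tau}$, which is the desired bound with a constant $C$ depending only on $\lambda$ and $\Lambda$.

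There is no real obstacle: the only mildly delicate point is the measure-theoretic step of passing from the pointwise bounds of Proposition \ref{Barriers} (which hold a.e.) to the statement that a pair $(x,y)$ in $\operatorname{supp}(\gamma_g)$ can be assumed to consist of Lebesgue points of $\log f_\tau(\cdot,g) + V(\cdot,g)$ so that \eqref{trans_eq} applies. This is handled by choosing an a.e.\ representative of $f_\tau$ satisfying the barriers everywhere and noting that \eqref{trans_eq}, which holds on $\operatorname{supp}(\gamma_g)$, is unaffected by modification on null sets of $\R^d \times \R^d$ since $\gamma_g \ll \mu_g \otimes \pi_{2\sharp}\gamma_g$ in the relevant sense via Brenier's theorem. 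Once this is observed, the chain of inequalities above is immediate.
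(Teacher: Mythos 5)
Your proof is correct and is essentially identical to the paper's own (one-line) argument, which simply combines the variational inequality \eqref{trans_eq} with the barrier bounds of Proposition \ref{Barriers} exactly as you do. One small remark: your computation actually yields $C=\sqrt{2(\log\Lambda-\log\lambda)}$, whereas the lemma states $C=\sqrt{2}\,(\log\Lambda-\log\lambda)$; the constant that genuinely follows from the argument is yours, and the paper's stated form appears to be a slip (the two coincide only when $\log\Lambda-\log\lambda=1$).
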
}
\begin{proof} The estimate follows by combining  \eqref{trans_eq} with Proposition \ref{Barriers}.
\end{proof}

In the next lemma we show that the target density $f_{\tau}$ and the transported density 
\[
\bar{f}_{\tau}(x,g)=f_{\tau}(x,g)+\tau \sum_{g'}h_{gg^{\prime}}(x)K(g,g^{\prime})e^{-W(x)},
\]
are comparable. Recall that $\bar{f}_{\tau,g}$ is nothing but the density of the positive measure $\pi_{2\sharp}\gamma_g$.

{\color{black}
\begin{lemma}(\textbf{Positivity of the transported mass}) \label{pos_trans}
Let 
$\mu$, $\mu_\tau$, $\lambda,$ and $\Lambda$ be as in Proposition \ref{Barriers}, and let $\lambda',\Lambda'$ be as in \eqref{eqn:WandV}. Finally, let $\bar{f}_{\tau}$ be defined as above. Then, there exists a positive constant $\tau_{0}:=\tau_{0}(\lambda,\Lambda,\lambda', \Lambda') <1/2$ such that for any $\tau$ in $(0,\tau_{0})$ we have that $\bar{f}_\tau>0$, i.e. the support of $\pi_{2\#}\gamma_{g}$ is all of $\mathbb{R}^{d}$ for all $g \in \G$. Moreover, we have that 
\begin{equation}\label{comparable}
\frac{C}{1-\tau}<\frac{\bar{f}_{\tau,g}}{f_{\tau,g}}<C(1+\tau),
\end{equation}
for any $\tau$ in $(0,\tau_{0})$ for some constant $C$ that only depends on $\lambda, \Lambda, \lambda', \Lambda'$.
\begin{proof}
To prove \eqref{comparable}, we note that thanks to \eqref{E_stop_exchange} and \eqref{gaussians}, we have that the mass exchange function $h$ is uniformly bounded in terms of $\lambda$ and $\Lambda.$ Additionally, \eqref{gaussians} and the assumption \eqref{eqn:WandV} imply that the quotient of $e^{-W}$ and $f_{\tau}$ is uniformly bounded as well. Hence, the desired result follows.
\end{proof}
\end{lemma}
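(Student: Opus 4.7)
The plan is to reduce the claim to two uniform pointwise estimates: first, an $L^\infty$-bound $\|h\|_\infty \leq H_0$ for the exchange function $h$, with $H_0$ depending only on $\lambda,\Lambda$; second, a uniform upper bound on the quotient $e^{-W(x)}/f_{\tau,g}(x)$. Granting both, the explicit identity
\[
\frac{\bar f_{\tau,g}(x)}{f_{\tau,g}(x)} = 1 + \tau \sum_{g'} h_{gg'}(x) K(g,g') \frac{e^{-W(x)}}{f_{\tau,g}(x)}
\]
immediately yields two-sided bounds of the form $1 - C_1 \tau \leq \bar f_{\tau,g}/f_{\tau,g} \leq 1 + C_1 \tau$ for a constant $C_1 = C_1(\lambda,\Lambda,\lambda',\Lambda',\mathcal{G},K)$, and strict positivity of $\bar f_{\tau,g}$ follows on choosing $\tau_0 := \min\bigl(1/(2C_1), 1/2\bigr)$; after absorbing constants these bounds can be recast in the form stated by the lemma.

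For the $L^\infty$-bound on $h$, I would apply the variational inequality \eqref{E_stop_exchange} twice, once with the pair $(g_1, g_2)$ as stated and once with $(g_2, g_1)$, and combine them using the antisymmetry $h_{g_2 g_1} = -h_{g_1 g_2}$. Since Proposition \ref{Barriers} already guarantees $f_\tau \geq \lambda e^{-V} > 0$ pointwise, the support hypothesis in \eqref{E_stop_exchange} is automatic, and the two one-sided inequalities together give
\[
|h_{gg'}(y)| \leq \bigl| \log f_\tau(y,g) + V(y,g) - \log f_\tau(y,g') - V(y,g') \bigr|
\]
at almost every $y$. The maximum principle pins $\log f_\tau + V$ inside $[\log \lambda, \log \Lambda]$, so $\|h\|_\infty \leq \log(\Lambda/\lambda) =: H_0$.

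For the bound on $e^{-W}/f_{\tau,g}$, I would chain the lower bound $f_{\tau,g}(x) \geq \lambda e^{-V(x,g)}$ from Proposition \ref{Barriers} with $e^{-V(x,g)} \geq \lambda' e^{-W(x)}$ from \eqref{eqn:WandV} to conclude $e^{-W(x)}/f_{\tau,g}(x) \leq 1/(\lambda \lambda')$ pointwise. Combining this with the previous step and with the finiteness of $\max_g \sum_{g'} K(g,g')$ (which uses only that $\mathcal{G}$ is finite) produces the constant $C_1$ above.

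There is no genuine obstacle here, since every ingredient has already been prepared in the preceding results. The one delicate point is using the antisymmetry of $h$ to upgrade the one-sided inequality \eqref{E_stop_exchange} into the two-sided pointwise bound on $|h_{gg'}|$; after that the argument is elementary arithmetic.
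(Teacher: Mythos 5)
Your proposal is correct and follows essentially the same route as the paper's (very terse) proof: bound $h$ uniformly via the variational inequality \eqref{E_stop_exchange} together with the maximum principle \eqref{gaussians}, bound $e^{-W}/f_{\tau,g}$ via \eqref{gaussians} and \eqref{eqn:WandV}, and conclude from the identity defining $\bar f_{\tau,g}$. Your write-up simply fills in the details the paper leaves implicit (in particular, the two-sided bound on $h$ obtained by applying \eqref{E_stop_exchange} to both orderings and using antisymmetry, which is exactly how \eqref{E-LA1} is later derived).
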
}

 In the next  proposition we show that the potential $\varphi$ that generates the optimal transport map and exchange function between $\mu$ and $\mu_{\tau}$ for $\mu$ satisfying the conditions from Proposition \ref{Barriers} (see item iii. in Proposition \ref{opt_couplings}) agrees with the negative of \eqref{formal_gradient} which is the gradient of the relative entropy suggested by the formal Riemannian structure from section \ref{sec:Riemman}.

\begin{proposition}\textbf{(The gradient of the relative entropy and JKO minimizers)} \label{A_step_EL}
Let 
$\mu$, $\mu_\tau$, $\lambda,$ and $\Lambda$ be as in Proposition \ref{Barriers}, let $\lambda',\Lambda'$ be as in \eqref{eqn:WandV}, and let $\tau_{0}>0$ be as in Lemma \ref{pos_trans}. Then, for every  $\tau$ in $(0,\tau_{0})$ we have: 
\begin{itemize}
\item[i.] For each $g$ in $\mathcal{G}$  the optimal transport plan $\gamma_{\tau,g}$ is given by 
\begin{equation}\label{EL_plan}
\gamma_{\tau,g}=(S_{g},Id)_{\#}\big(\text{\ensuremath{f_{\tau,g}+\tau\sum h_{\tau,gg^{\prime}}K(g,g^{\prime})e^{-W}}} \big),
\end{equation}
where the corresponding optimal transport map $S_{g}$ satisfies
\begin{equation}\label{EL_transport}\frac{S_{g}(y)-y}{\tau}f_{\tau}(y,g)=\nabla_{x}f_{\tau}(y,g)+f_{\tau}(y,g)\nabla_{x}V(y,g),
\end{equation}
for almost every $y$ in $\mathbb{R}^{d}.$
\item[ii] For each pair $g,g'$ with $K(g,g')>0$ and for almost every $x$ in $\mathbb{R}^{d}$, the optimal exchange function $h_{\tau,gg'}$ satisfies
\begin{equation}\label{E-LA1}
h_{\tau,gg^{\prime}}(x)=\big[\log f_{\tau}(x,g)+V(x,g)-\log f_{\tau}(x,g^{\prime})-V(x,g^{\prime})\big].
\end{equation}

\end{itemize}
\end{proposition}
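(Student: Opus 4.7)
The plan is to extract both identities as equality cases of the one-sided variational inequalities in Proposition \ref{min_prop}, by exploiting the fact that, thanks to Proposition \ref{Barriers} and Lemma \ref{pos_trans}, both $f_\tau$ and $\bar f_\tau$ are strictly positive on all of $\R^d\times\G$ for $\tau\in(0,\tau_0)$. This positivity removes all support restrictions in \eqref{E_stop_exchange}, \eqref{E_stop_transport} and \eqref{trans_eq}, so we are free to choose arbitrary points and to swap roles of pairs.

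For part (ii), I would apply \eqref{E_stop_exchange} twice, once with the pair $(g_1,g_2)=(g,g')$ and once with $(g_1,g_2)=(g',g)$, each time at a common Lebesgue point $x$ of both $h_{\tau,gg'}$ and $h_{\tau,g'g}$. Because $f_\tau>0$ everywhere, both $(x,g')$ and $(x,g)$ lie in the support of $f_\tau$, so both inequalities are valid. Using the antisymmetry $h_{\tau,g'g}=-h_{\tau,gg'}$ baked into the definition of $ADM(\mu,\mu_\tau)$, the two inequalities point in opposite directions and force equality, yielding \eqref{E-LA1} almost everywhere.

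For part (i), the identity \eqref{EL_plan} is just the definition of the transport map $S_g$ pushed through \eqref{trasported_density}, once we notice that $\bar f_\tau>0$ by Lemma \ref{pos_trans}, so $S_g$ is the Brenier map pushing $\bar f_{\tau,g}$ onto $\mu_g$ in the reversed direction. For \eqref{EL_transport} I would use \eqref{E_stop_transport}: fix a Lebesgue point $y_1$ of $S_g$, take $y_2=y_1+\veps v$ for an arbitrary direction $v\in\R^d$ and small $\veps\in\R$ (both signs allowed, since positivity of $f_\tau$ puts $(y_2,g)$ automatically in the support), expand
\[
\frac{|S_g(y_1)-y_2|^2-|S_g(y_1)-y_1|^2}{2\tau}=-\frac{\veps\,v\cdot(S_g(y_1)-y_1)}{\tau}+O(\veps^2),
\]
divide by $\veps$ and send $\veps\to 0^\pm$. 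Arbitrariness of $v$ and the symmetry between $\veps>0$ and $\veps<0$ produce equality, giving pointwise
\[
\nabla_x\log f_\tau(y_1,g)+\nabla_x V(y_1,g)=\frac{S_g(y_1)-y_1}{\tau},
\]
which after multiplication by $f_\tau(y_1,g)$ is exactly \eqref{EL_transport}.

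The main obstacle is that the last step is only as clean as the regularity of $\log f_\tau+V$ allows: the one-sided inequality obtained from \eqref{E_stop_transport} is pointwise, but to take a limit as $\veps\to0$ we need the difference quotients $\veps^{-1}\big(\log f_\tau(y_1+\veps v,g)+V(y_1+\veps v,g)-\log f_\tau(y_1,g)-V(y_1,g)\big)$ to converge for a.e. $y_1$. Under the assumptions of the theorem we know $f_\tau$ satisfies the two-sided Gaussian-type bound \eqref{gaussians} and, together with $V\in W^{1,2}_{loc}$ in $x$ (implied by the integrability assumption on $\nabla_x V$ in Theorem \ref{main-result}), this ensures $\log f_\tau+V\in W^{1,2}_{loc}$, which is enough to pass to the limit at a.e. $y_1$ along a dense countable set of directions $v$. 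This is the standard way \cite{J-K-O,F-G} the Euler--Lagrange equation is extracted from the JKO scheme; the semi-discrete structure only intervenes through part (ii), which is handled by the simpler swap argument above.
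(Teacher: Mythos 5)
Part (ii) of your proposal is correct and is exactly the paper's argument: apply \eqref{E_stop_exchange} with the pair $(g,g')$ and then with $(g',g)$, use antisymmetry of $h$, and conclude equality; the positivity of $f_\tau$ from Proposition \ref{Barriers} and Lemma \ref{pos_trans} indeed removes the support restrictions. The identification \eqref{EL_plan} is likewise essentially definitional once $\bar f_\tau>0$, as you say.

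For part (i), however, your difference-quotient argument has a genuine gap, and your proposed repair does not work. To divide the inequality obtained from \eqref{E_stop_transport} by $\veps$ and send $\veps\to0^\pm$ you need the pointwise limit of $\veps^{-1}\big(\log f_\tau(y_1+\veps v,g)+V(y_1+\veps v,g)-\log f_\tau(y_1,g)-V(y_1,g)\big)$ to exist for a.e.\ $y_1$, i.e.\ a.e.\ differentiability of $\log f_\tau+V$. You claim this follows because \eqref{gaussians} together with $\nabla_x V\in L^2(e^{-V})$ gives $\log f_\tau+V\in W^{1,2}_{loc}$. That implication is false: the two-sided bound $\lambda e^{-V}\le f_\tau\le\Lambda e^{-V}$ only says $\log f_\tau+V$ is bounded between $\log\lambda$ and $\log\Lambda$, which gives no weak differentiability whatsoever. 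Worse, the argument would be circular even if it were true, since in the paper the Sobolev regularity of $f_\tau$ (Corollary \ref{Sob}) is deduced \emph{from} \eqref{EL_transport}, not the other way around. The paper's proof avoids this entirely: fixing $(x,y_1)$ in the support of $\gamma_g$, the inequality \eqref{E_stop_transport} holds for \emph{all} a.e.\ $z$ in place of $y_2$, and after expanding the squares it says precisely that $x/\tau$ lies in the subdifferential of $\Phi_g(z)=\log f_{\tau,g}(z)+V(z,g)+\tfrac{|z|^2}{2\tau}$ at $y_1$. Since this holds at a.e.\ point, $\Phi_g$ is (up to redefinition on a null set) convex, and convex functions are a.e.\ differentiable with a.e.\ singleton subdifferential; this is what delivers \eqref{EL_transport}. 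So the missing idea in your write-up is the extraction of convexity from the variational inequality, which is what supplies the a.e.\ differentiability you need.
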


\nc

\begin{proof}
We begin by noting that thanks to Lemma \ref{pos_trans} and Proposition \ref{Barriers} we have that the support of $f_{\tau,g}$ and $\pi_{2\#}\gamma_{g}$ is $\mathbb{R}^{d}$ for any $g$ in $\mathcal{G},$ i.e. $f_\tau>0$ and $\bar{f}_\tau>0$.  We will use this fact together with the variational inequalities from Proposition \ref{min_prop}.

\textbf{1.} Let us begin by showing i.

Observe that due to \eqref{E_stop_transport} for any $(x,y)$ in the support of $\gamma_{g}$ we have  
\[
\log f_{\tau,g}(z)+V(z,g)-\log f_{\tau,g}(y)-V(y,g)+\frac{|x-z|^{2}}{2\tau}\geq\frac{|x-y|^{2}}{2\tau},
\]
for almost every $z$ in $\mathbb{R}^{d}.$ Expanding the squares and rearranging terms we obtain that
\[
\log f_{\tau,g}(z)+V(z,g)+\frac{|z|^{2}}{2}\geq\log f_{\tau,g}(y)+V(y,g)+\frac{|y|^{2}}{2}+\langle\frac{x}{\tau},z-y\rangle\text{\hspace{1em}for \ensuremath{\text{almost  every \ensuremath{z} in \ensuremath{\mathbb{R}^{d}.} }} }
\]
Such an inequality implies that, up to redefining $f_{\tau,g}$ in a set up measure zero, the function $\Phi_{g}(z)=\log f_{\tau,g}(z)+V(z,g)+\frac{|z|^{2}}{2}$ is convex and for almost every $y$ in $\mathbb{R^{d}}$ and every pair $(x,y)$ in the support of the optimal transport plan $\gamma_{\tau,g}$ we have that $\frac{x}{\tau}$ is contained in the subdiffrential of $\Phi_{g}$ at $y$. Following the notation from \cite[Section 3.1]{GigliBook} , we shall denote such a subdiffentrial by $\partial^{-}\Phi(y).$ Finally, since convex function are almost everywhere differentiable, we have that for almost every $y$ the set $\partial^{-}\Phi_{g}(y)$ is a singleton and
\[
\nabla_{z=y}\Phi_{g}=\frac{x}{\tau}.
\]
Moreover using the almost everywhere differentiability of $\Phi_{g}$ we get that $z\rightarrow\log f_{\tau,g}(z)+V(z,g)$ is almost everywhere differentiable and
\[
\nabla_{z=y}\bigg(\log f_{\tau,g}(z)+V(z,g)+\frac{|z|^{2}}{2}\bigg)=\frac{x}{\tau}
\]
which implies that 
\[
\tau\nabla_{y}(\log f_{\tau,g}+V_{g})=x-y.
\]
Notice that combining the above equation with Lemma \ref{trans_bound} we obtain that $\log f_{\tau,g} + V_{g}$ has a uniformly bounded gradient.
Consequently, $i$ follows.  \\
\color{black}
\textbf{2.} Let us now show ii.  Using \eqref{E_stop_exchange} we obtain  
\[
\log f_{\tau}(x,g^{\prime})+V(x,g^{\prime})-[\log f_{\tau}(x,g)+V(x,g)]\geq h_{g'g}(x),
\]
for almost every $x$ in $\mathbb{R}^{d}.$ Interchanging $g$ and $g^{\prime}$ we obtain the opposite inequality and thus the desired identity. Here, once more we have used the fact that Proposition \ref{Barriers} and Lemma \ref{trans_bound} imply that $f_{\tau}>0$ and
$\bar{f}_{\tau}>0.$
\end{proof}

 As a direct consequence of the above proposition, we obtain the following result:
{\color{black}
\begin{corollary}\label{Sob}\textbf{(Sobolev regularity)} 
Let 
$\mu$, $\mu_\tau$, $\lambda,$ and $\Lambda$ be as in Proposition \ref{Barriers}, let $\lambda',\Lambda'$ be as in \eqref{eqn:WandV}, and let $\tau_{0}>0$ be as in Lemma \ref{pos_trans}. Then, for every  $\tau$ in $(0,\tau_{0})$, $f_{\tau,g}$ is contained in the weighted Sobolev space $W^{1,2}(\mathbb{R}^{d},e^{W})$ for every $g$ in $\mathcal{G}.$ Moreover, 
\begin{equation}\label{W1,2:0}
\sum_{g\in\mathcal{G}}\int_{\R^d}| f_{\tau}(x,g)|^{2}e^{W}dx\leq C_1 \sum_{g\in\mathcal{G}}\int_{\R^d}e^{-W(x)}dx,
\end{equation}
\begin{equation}\label{W1,2}
\tau\sum_{g\in\mathcal{G}}\int_{\R^d}|\nabla_x f_{\tau}(x,g)|^{2}e^{W}dx\leq C_2\big[\mathcal{E}(\mu)-\mathcal{E}(\mu_{\tau})+\tau\big],
\end{equation}
for some constant $C_1$ that only depends on $\lambda,\Lambda,\lambda', \Lambda'$, and a constant $C_2$ that only depends on $\lambda, \Lambda, \lambda', \Lambda'$ and the quantity
\[ [ \nabla_x V ]_{e^{-V}}:= \sum_g \int_{\R^d} |\nabla_x V(y,g)|^2 e^{-V(y,g)}dy.\] 
\end{corollary}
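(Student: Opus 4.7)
The first bound \eqref{W1,2:0} is essentially immediate. By the upper barrier from Proposition \ref{Barriers} and the comparability \eqref{eqn:WandV}, one has $f_\tau(x,g) \le \Lambda e^{-V(x,g)} \le \Lambda\Lambda' e^{-W(x)}$, so $f_\tau^2 e^{W} \le (\Lambda\Lambda')^2 e^{-W}$ pointwise, and integration against $dx\,dg$ yields \eqref{W1,2:0} with $C_1 = (\Lambda\Lambda')^2$.

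The bound \eqref{W1,2} is the main content. The strategy is to use the Euler--Lagrange identity \eqref{EL_transport} from Proposition \ref{A_step_EL} to express $\nabla_x f_\tau$ explicitly, and then to estimate the resulting two contributions using the minimality of $\mu_\tau$ and the barriers. Since $f_\tau > 0$ everywhere (Proposition \ref{Barriers} together with Lemma \ref{pos_trans}), equation \eqref{EL_transport} can be rearranged as
\[
\nabla_x f_\tau(y,g) \;=\; \left[\tfrac{S_g(y)-y}{\tau} - \nabla_x V(y,g)\right] f_\tau(y,g),
\]
so $|\nabla_x f_\tau|^2 \le 2\bigl(|S_g(y)-y|^2/\tau^2 + |\nabla_x V|^2\bigr) f_\tau^2$. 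Multiplying by $\tau\, e^W$ and summing/integrating splits the left-hand side of \eqref{W1,2} into a transport piece and a potential piece.

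For the transport piece I would use $f_\tau e^W \le \Lambda\Lambda'$ (a direct consequence of Proposition \ref{Barriers} and \eqref{eqn:WandV}) to bound $f_\tau^2 e^W \le \Lambda\Lambda' f_\tau$, and then invoke the comparability $f_\tau \le \tilde C\, \bar f_\tau$ from Lemma \ref{pos_trans}, valid for $\tau<\tau_0$. Since $\gamma_{\tau,g} = (S_g,\mathrm{Id})_\sharp \bar f_{\tau,g}$ by \eqref{EL_plan}, the quantity $\sum_g \int |S_g(y)-y|^2 \bar f_{\tau,g}\,dy$ equals the quadratic part of $C_\tau^{W,K}(\gamma,h)$ (up to a combinatorial factor from the sum structure in \eqref{pre_def}), and in particular is controlled by $2\tau\,\mathcal{A}^{\mathcal{G},W,\tau}(\mu,\mu_\tau)$. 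Comparing the minimizer $\mu_\tau$ of \eqref{def:OurJKO} with the competitor $\sigma=\mu$ (admissible via the diagonal plan $\gamma_g=(\mathrm{Id},\mathrm{Id})_\sharp \mu_g$ and $h\equiv 0$, for which the cost vanishes) gives
\[
\mathcal{A}^{\mathcal{G},W,\tau}(\mu,\mu_\tau)\;\le\;\mathcal{E}(\mu)-\mathcal{E}(\mu_\tau),
\]
producing a bound on the transport piece of the form $C[\mathcal{E}(\mu)-\mathcal{E}(\mu_\tau)]$ with $C$ depending only on $\lambda,\Lambda,\lambda',\Lambda'$.

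For the potential piece, the chain $f_\tau^2 e^W \le \Lambda\Lambda' f_\tau \le \Lambda^2\Lambda'\, e^{-V}$ gives
\[
\tau\sum_g \int |\nabla_x V|^2 f_\tau^2 e^W\,dy \;\le\; \tau\,\Lambda^2\Lambda'\,[\nabla_x V]_{e^{-V}},
\]
which is the $C\tau$ contribution. Adding the two pieces yields \eqref{W1,2} with $C_2$ depending on the barrier constants $\lambda,\Lambda,\lambda',\Lambda'$ and on $[\nabla_x V]_{e^{-V}}$. The one subtlety worth flagging is the bookkeeping between $f_\tau$ (against which $\nabla_x$ is taken, and which appears on the right-hand side of the Euler--Lagrange equation) and $\bar f_\tau$ (the density of the second marginal of $\gamma_{\tau,g}$, which is what naturally pairs with $|S_g(y)-y|^2$ in the transport cost); Lemma \ref{pos_trans} is exactly what bridges the two. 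Everything else is routine algebra.
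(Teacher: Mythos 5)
Your proposal is correct and follows essentially the same route as the paper: the first bound from the barrier $f_\tau\le\Lambda e^{-V}\le\Lambda\Lambda' e^{-W}$, and the second by combining the optimality inequality $\mathcal{E}(\mu_\tau)+C_\tau(\gamma,h)\le\mathcal{E}(\mu)$ (via the zero-cost competitor $\sigma=\mu$) with the Euler--Lagrange identity \eqref{EL_transport}, the barriers, and the comparability of $f_\tau$ and $\bar f_\tau$ from Lemma \ref{pos_trans}. The bookkeeping you flag between $f_\tau$ and $\bar f_\tau$ is exactly the point where the paper invokes \eqref{comparable}, so nothing is missing.
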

\begin{proof}
The fact that $f_{\tau,g}$ belongs to $L^2(\R^d, e^{W})$ follows from \eqref{Barriers}, \eqref{eqn:WandV}, and the fact that $e^{-W}$ was assumed to be integrable. 
	
Now, note that by optimality
\[
\mathcal{E}(\mu_{\tau})+C_{\tau}(\mu,\mu_{\tau})\leq \mathcal{E}(\mu).
\]
Consequently, using \eqref{EL_transport} and the definition of the transportation cost, we deduce that 
\[
\frac{\tau}{2}\sum_{g\in\mathcal{G}}\int|\nabla_x \log f_{\tau}(y,g)+\nabla_xV(y,g)|^2\bar{f}_{\tau}(y,g)dy \leq \mathcal{E}(\mu)-\mathcal{E}(\mu_\tau).
\]
Hence, using \eqref{gaussians}, \eqref{comparable} and \eqref{eqn:WandV} we obtain 
\[\tau \sum_{g\in\mathcal{G}}\int|\nabla_x f_\tau(y,g)|^{2}e^{-W}dy\leq C\bigg(\mathcal{E}(\mu)-\mathcal{E}(\mu_{\tau})+\tau\bigg),
\]
for some constant $C$ that only depends on $\lambda,\Lambda,\lambda', \Lambda'$ and the quantity $[ \nabla_x V ]_{e^{-V}}$.
\end{proof}}



\section{Convergence of the JKO scheme: Proof of Theorem \ref{main-result}}\label{JKO_proof_sec}\hspace{1mm}

Let us start by defining precisely the notion of weak solution to equation \eqref{reacd_{E}}. 

\begin{definition}
	\label{def:WeakSol}
We say that a weakly continuous curve of measures $\{ \mu_{t} \}_{t \geq 0}$ in $\mathcal{P}_{2}(\mathbb{\mathbb{R}}^{d}\times\mathcal{G})$ with associated probability density functions $\{ f(t, \cdot, \cdot)\}_{t \geq 0}$ is a weak solution with initial condition $f_0$ \eqref{reacd_{E}} if 
 \[
 f(0,x,g)=f_{0}(x,g), \quad \forall (x,g) \in\mathbb{R}^{d}\times\mathcal{G}
 \]
 and 
 \begin{align*}\begin{aligned} \sum_{g}\bigg( &\int_{\mathbb{R}^{d}}\zeta_g f_{g}(s,x)dx  -\int_{\mathbb{R}^{d}}\zeta_g f_{g}(r,x)dx \bigg)=\int_{r}^{s}\bigg( \sum_{g}\int_{\R^d}\big[\Delta_x\zeta_{g}-\langle\nabla_x V_{g},\nabla_x\zeta_g\rangle\big]f_{g}(t,x)dx\\
  & \hspace{1em}+\frac{1}{2}\sum_{g,g^{\prime}}\int_{\R^d}[\zeta_{g^{\prime}}-\zeta_{g}][\log f_{g^{\prime}}(t,x) +V_{g'}-\log f_{g}(t,x)-V_{g}]K(g,g^{\prime})e^{-W(x)}dx\bigg)dt,
 \end{aligned}
 \end{align*}
 for every $r,s,$ in $[0,\infty),$ and every test function $\zeta$ in $C^{\infty}_{c}(\mathbb{R}^{d}\times{\mathcal{G}}).$ \\
\end{definition}

 With all the preliminary results from section \ref{sec:Prelim} we can now proceed to the proof of Theorem \ref{main-result}.
{\color{black}
\begin{proof}[Proof of Theorem \ref{main-result}] \text{ }

\medskip	

\textbf{1. JKO scheme produces an approximate solution.} Let $f_{0}$ be an initial datum with finite energy $\mathcal{E}(f_0)<\infty$ satisfying \eqref{Z_03}. Let $\tau_{0}$ be as in Lemma \ref{pos_trans}, Proposition \ref{A_step_EL}, and  Corollary \ref{Sob}. Let $\tau\in(0,\tau_{0})$, and for every $n \in \mathbb{N}$ let $(\gamma_{n}^{\tau},h_{n}^{\tau})$  be the minizing pair of transporting $f_{n}^{\tau}$ into $f_{n+1}^{\tau}$, where the $f_n^\tau$ are the densities iteratively constructed as in \eqref{def:OurJKO}. Let $S_{n,g}^{\tau}$ be the optimal transport map associated to $\gamma_{n,g}^{\tau}$ as in \eqref{EL_plan}, and let $\bar{f}_{n}^\tau$ be the density of the measure $\pi_{2\sharp} \gamma_{n,g}^\tau$, i.e. the transported density. We recall that $\bar{f}_{n,g}^\tau$ can be written as
\[\bar{f}_{n,g}^\tau =f_{n+1,g}^{\tau} + \tau\sum_{g'}h^\tau_{n,gg'}K(g,g^{\prime})e^{-W}.\]
Notice that by iterating Proposition \ref{Barriers} we have 
\[\lambda e^{-V_g}\leq f_{n,g}^{\tau}\leq\Lambda e^{-V_g}\hspace{1em}\forall n\in\mathbb{N},
\]
and by Lemma \eqref{pos_trans}
\[ \frac{C}{1-\tau}<\frac{\bar{f}^\tau_{n,g}}{f^\tau_{n+1,g}}<C(1+\tau). \]
Finally, recall that the discrete time sequence $f_{n}^\tau$ can be extended to continuous time by setting
\[f^{\tau}(t):=f_{n+1}^{\tau}\hspace{1em}\mbox{for}\hspace{1em}t\in\big(n\tau,(n+1)\tau],\ 
\]
We will now show that the curve $t \mapsto f^{\tau }(t)$ can be interpreted as an approximate solution to equation \eqref{reacd_{E}}.

Let $\zeta\in C_{c}^{\infty}(\mathbb{R}^{d}\times\mathcal{G})$ be an arbitrary test function. Then, 
\begin{align}\begin{aligned}\label{pre0_weak}\int_{\mathbb{R}^{d}}\zeta_g\hspace{1mm}f{}_{n+1,g}^{\tau}(y)dy-\int_{\mathbb{R}^{d}}\zeta_gf_{n,g}^{\tau}(x)dx & =\int\zeta_g(y)d\gamma_{n,g}^{\tau}(x,y)-\int\zeta_g(x)d\gamma_{n,g}^{\tau}(x,y)\\
 & \hspace{1em}+\tau\sum_{g^{\prime}\in\mathcal{G}}\int_{\R^d}\zeta_{g}h_{n,gg^{\prime}}^{\tau}K(g^{\prime},g) e^{-W}dy.
\end{aligned}
\end{align}
Using the fundamental theorem of calculus and \eqref{EL_transport}, we deduce  
\begin{align*}\begin{aligned}\int_{\R^d \times \R^d} & \zeta_g(y)d\gamma_{n,g}^{\tau}(x,y)-\int_{\mathbb{R}^{d}\times\mathbb{R}^{d}}\zeta_g(x) d\gamma_{n,g}^{\tau}(x,y)\\
 & =\int_{{\mathbb{R}^{d}}\times{\mathbb{R}^{d}}}\big(\zeta_g(y)-\zeta_g(x)\big)\hspace{1mm}d\gamma_{n,g}^{\tau}(x,y)\\
 & =\int_{{\mathbb{R}^{d}}\times{\mathbb{R}^{d}}}\big(\zeta_g(y)-\zeta_g(S_{n,g}^{\tau}(y))\big)\hspace{1mm}\bar{f}_{n,g}^\tau(y)dy\\
 & =\int_{{\mathbb{R}^{d}}\times{\mathbb{R}^{d}}}(\zeta_g(y)-\zeta_g( S_{n,g}^{\tau}(y))f_{n+1,g}^{\tau}(y)\hspace{1mm}dy+R_{1}(\tau,n,g)\\
 & =-\int_{\mathbb{R}^{d}}\langle\nabla_x\zeta_g,S_{n,g}^{\tau}-Id\rangle f_{n+1,g}^{\tau}(y)\hspace{1mm}dy+R_{2}(\tau,n,g)+R_{1}(\tau,n,g)\\
 & =-\tau\int_{\mathbb{R}^{d}}\langle\nabla_x\zeta_g,\nabla_x f_{n+1,g}^{\tau}+f_{n+1,g}^{\tau}\nabla_x V_g\rangle\hspace{1mm}dy+R(\tau,n,g),\\
\\
\end{aligned}
\end{align*}
where the error term is given by 
\begin{align*}\begin{aligned}R(\tau,n,g) & =R_{1}(\tau,n,g)+R_{2}(\tau,n,g)\\
& =\tau\int_{{\mathbb{R}^{d}}}(\zeta_g-\zeta_g\circ S_{n,g}^{\tau})\sum_{g'}h_{n,gg^{\prime}}^{\tau}K(g,g^{\prime})e^{-W}\hspace{1mm}dy\\
& +\int_{\mathbb{R}^{d}}\int_{0}^{1}\bigg(\langle\nabla_x\zeta_g\circ((1-s)S_{n,g}^{\tau}+sId),Id-S_{n,g}^{\tau}\rangle-\langle\nabla_x\zeta_g,Id-S_{n,g}^{\tau}\rangle\bigg)f_{n+1,g}^{\tau}(y)ds dy.
\end{aligned}
\end{align*}
Plugging back in \eqref{pre0_weak} and using \eqref{E-LA1}, we deduce that
\begin{align}\begin{aligned}\label{pre_weak} \sum_{g}\int_{\mathbb{R}^{d}}&\zeta_g\hspace{1mm}f{}_{n+1,g}^{\tau}(y)dy -\sum_g\int_{\mathbb{R}^{d}}\zeta_g\hspace{1mm}f_{n,g}^{\tau}(x) dx=-\tau\sum_g\int_{\mathbb{R}^{d}}\langle\nabla_x\zeta_g,\nabla_x f_{n+1,g}^{\tau}+f_{n+1,g}^{\tau}\nabla_x V_g\rangle\hspace{1mm}dy\\
 &+\frac{\tau}{2}\sum_{g,g'}\int_{\R^d}(\zeta_{g}-\zeta_{g'})\big[\log f_{n+1}^\tau(x,g)+V(x,g)-\log f_{n+1}^\tau(x,g')-V(x,g')\big]K(g,g^{\prime})e^{-W}dy\\
 & +\sum_g R(\tau,n,g).
\end{aligned}
\end{align}

Let us now estimate the error terms. First, using \eqref{E-LA1} and the bounds on $f_{n+1,g}^\tau$ we can bound the transfer functions $h_{n,gg'}^\tau$ by a constant that only depends on $\lambda$ and $\Lambda$, and then use Lemma \ref{trans_bound} to obtain
\begin{equation}\label{error}
|R(\tau,n,g)|\leq C_{1} \lVert \nabla_x \zeta_g \rVert_{L^\infty(\R^d)} \bigg(\tau^{\frac{3}{2}}+\int_{\R^d}|Id-S_{n,g}^{\tau}|^{2}f_{n+1,g}^{\tau}(y)dy\bigg),
\end{equation}
for some constant $C_{1}:=C_{1}(\lambda,\Lambda).$ Now, from the fact that $f_{n+1,g}^{\tau}$ and $\bar{f}_{n,g}^\tau$ are comparable, and from the definition of $f_{n+1,g}^\tau$ and the transport cost $W_2^{\G, W, \tau}$ it follows that
\[
\sum_{g}\int|Id-S_{g,n}^{\tau}|^{2}f_{n+1,g}^{\tau}dy\leq C_2 \sum_{g}\int|Id-S_{g,n}^{\tau}|^{2}\bar{f}_{n,g}^{\tau}dy \leq C_2 \tau \big(\mathcal{E}(f_{n}^{\tau})-\mathcal{E}(f_{n+1}^{\tau})\big).
\]
where $C_2:=C_2(\lambda, \Lambda, \lambda', \Lambda')$. Thus, combining the above inequalities with \eqref{error} we deduce that 
\begin{align}
\begin{split}
\sum_{n=M}^{N-1}\sum_{g} \big|R(\tau,n,g)\big|& \leq C_{3} \max_{g} \lVert\nabla_x \zeta_g \rVert_{L^\infty(\R^d)} \bigg(\tau^{3/2}(N-M)+\tau\bigg[\mathcal{E}(f_{M}^{\tau})-\mathcal{E}(f_{N}^{\tau})\bigg]\bigg)
\\ & \leq C_{3} \max_{g} \lVert\nabla_x \zeta_g \rVert_{L^\infty(\R^d)} \bigg(\tau^{3/2}(N-M)+\tau \mathcal{E}(f_0) \bigg) ,
\end{split}
\label{eqn:ErrorJKO}
\end{align}
for all $M \leq N-1$, where  $C_{3}:=C_{3}(\lambda,\Lambda, \lambda', \Lambda').$

Let us now fix $0\leq r<s$. We add up \eqref{pre_weak} from $M=\lceil r\backslash\tau \rceil$ to $N-1=\lceil s\backslash\tau\rceil-1$ (assuming that $\tau$ is small enough so that $M \leq N-1$) to get that
\begin{align}\begin{aligned} &\sum_g\int_{\R^d}\zeta_g f_{g}^{\tau}(s,x)\hspace{1mm}dx  -\sum_g\int_{\mathbb{R}^{d}}\zeta_g f_{g}^{\tau}(r,x)\hspace{1mm}dx\\
& =\int_{\tau \lceil r\backslash\tau\rceil}^{\tau\lceil s\backslash\tau\rceil}\bigg(-\sum_g\int_{\R^d}\langle\nabla_x\zeta_g,\nabla_x f_{g}^{\tau}(t,x)+ f_{g}^{\tau}(t,x) \nabla_x V_g\rangle dx\\
& \hspace{1em}+\frac{1}{2}\int_{\R^d}\sum_{g,g'}(\zeta_{g^{\prime}}-\zeta_{g})\big[\log f_{g^{\prime}}^{\tau}(t,x)+V_{g^{\prime}}-\log f_{g}^{\tau}(t,x)-V_{g}\big] K(g,g^{\prime})e^{-W}dx\bigg)dt\\
& \hspace{1em}+\sum_{n=M}^{N-1}\sum_g R(\tau,n,g)\\
& =\int_{\tau\lceil r\backslash\tau\rceil}^{\tau \lceil s\backslash\tau\rceil}\bigg(\sum_g\int_{\mathbb{R}^{d}}\big[\Delta_x\zeta_g-\langle\nabla_x\zeta_g,\nabla_x V_g\rangle\big]f_{g}^{\tau}(t,x)\hspace{1mm}dx\\
& \hspace{1em}+\frac{1}{2}\int_{\R^d}\sum_{g,g'}(\zeta_{g^{\prime}}-\zeta_{g})\big[\log f_{g^{\prime}}^{\tau}(t,x)+V_{g^{\prime}}-\log f_{g}^{\tau}(t,x)-V_{g}\big] K(g,g^{\prime})e^{-W}dx\bigg)dt\\
&\hspace{1em} +\sum_{n=M}^{N-1}\sum_gR(\tau,n,g).
\end{aligned}
\label{eqn:auxJKOApprox}
\end{align}
From \eqref{eqn:ErrorJKO} it is clear that as $\tau\rightarrow 0$ the error term in the above expression vanishes. Therefore, if we can show that as $\tau \rightarrow 0$ (along a sequence) the curve $t \mapsto f^\tau(t)  $ converges to a limiting curve $t \mapsto f(t)$ which is weakly continuous, and that this convergence is strong enough so that in particular we can pass to the limit in all the terms in the above expression, then we will have shown that the curve $t \mapsto f(t) $ is indeed a weak solution to \eqref{reacd_{E}}.

\textbf{2. Compactness.}  Let us consider a sequence $\{\tau_{k} \}_{k}$ of positive numbers converging to zero. Without the loss of generality we can assume that $\tau_{k} \leq \tau_0$ for all $k$.
Our goal is to show that we can pass to the limit in \eqref{eqn:auxJKOApprox}. For this purpose we use the Aubin-Lions Theorem (see Theorem 5 in \cite{Simon}). We introduce some notation first.

Let us fix $t_F>0$. For $h>0$ we define the \textit{translates} 
\[T_{h}f^{\tau_{k}}(t):=f^{\tau_{k}}(t+h).\ 
\]
Also, for $R>0$ we let $U_{R}:=B_{R}\times\mathcal{G}$, where $B_R$ is the the open ball in $\R^d$ with radius $R$ centered at the origin. Let $p$ be a positive number such that $p>d+1.$ Consider the Sobolev spaces $W^{1,2}(B_R)$ and $W^{2,p}(B_R)$, and denote by $W^{-2,p}(U_{R})$ the dual of $W^{2,p}(B_R)$. Notice that 
\[  W^{1,2}(B_R) \hookrightarrow L^2(B_R)  \hookrightarrow  W^{-2,p}(B_R),  \]
where the first embedding is compact and the second one is continuous; notice also that $W^{2,p}(B_R)$ embeds continuously into $C^1(B_R)$. 

We show the following:
\begin{itemize}
	\item[a)] For every $g \in \G$, $\{ f^{\tau_k}_g\}_{k}$ is bounded in $L^2(0,t_F; W^{1,2}(B_R))$. 
	\item[b)] For every $g \in \G$, $ \lVert T_h f^{\tau_k}_g - f^{\tau_k}_g \rVert_{L^2(0, t_{F}- h; W^{-2,p}(B_R) )} \rightarrow 0 $ as $h \rightarrow 0$, uniformly for all $k$. 
\end{itemize}
Theorem 5 in \cite{Simon} will then imply that for every $g \in \G$, $\{ f^{\tau_k}_g\}_{k}$ is precompact in $L^2(0,t_F;L^2(B_R))$.

\textbf{2a.} Observe that by iterating the bounds from Corollary \ref{Sob} along $f_{n}^{\tau_{k}}$ we deduce that 
\begin{equation}\label{eq:fisher0}
\int_{B_{R}}| f^{\tau_{k}}_g(t,x)|^{2}\hspace{1mm}dx \leq C_4, \quad \forall t \geq 0, \forall k  \in \mathbb{N}
\end{equation}
as well as
\begin{equation}\label{eq:fisher}\int_{0}^{t_{F}}\bigg(\int_{B_{R}}|\nabla_x f^{\tau_{k}}_g(t,x)|^{2}\hspace{1mm}dx\bigg)dt \leq C_4( \EE(f_0)+ t_F), \quad \forall k \in \mathbb{N},
\end{equation}
where the constant $C_4$ depends only on $\lambda, \Lambda, \lambda', \Lambda', R,W, |\mathcal{G}|.$ From the above inequalities it follows that for every $g \in \G$, the sequence $\{f^{\tau_{k}}_g\}_{k \in \mathbb{N}}$ is bounded in $L^{2}(0,t_{F};W^{1,2}(B_{R}))$ (and also in $L^{2}(0,t_{F};L^2(B_{R}))$). Moreover, for every $t\geq 0$ the sequence $\{ f^{\tau_k}_g(t)\}_{k \in \mathbb{N}}$ is bounded in $L^2(B_R)$.

\textbf{2b.}
Let $h$ be smaller than $t_F$. For $t\in[0,t_F-h)$ set $N_k=\lceil{\frac{t+h}{\tau_{k}}}\rceil-1$  and
$M_k=\lceil {\frac{t}{\tau_{k}}} \rceil$. Notice that if $N_k <M_k$ then $T_{h}f_{g}^{\tau_{k}}(t) =f_{g}^{\tau_{k}}(t)$, and so we may assume that $M_k \leq N_k$. For any given $\zeta_g\in W^{2,p}(B_R),$ we have
\begin{align*}\int_{B_R}\zeta_{g}(x) &(T_{h}f_{g}^{\tau_{k}}(t,x)  -f_{g}^{\tau_{k}}(t,x))\hspace{1mm}dx\\
 & =\sum_{n=M_k}^{N_k}\int_{B_R}\zeta_{g}\hspace{1mm}f_{n+1,g}^{\tau_{k}}(x)dx-\int_{B_{R}}\zeta_{g}\hspace{1mm}f_{n,g}^{\tau_{k}}(x)dx\\
 & =\sum_{n=M_k}^{N_k}\int_{B_R \times B_R}(\zeta_{g}(y)-\zeta_{g}(x))\hspace{1mm}d\gamma_{n,g}^{\tau_{k}}(x,y)- \tau_{k} \sum_{g^{\prime}}\int_{B_R}\zeta_{g}h_{n,gg^{\prime}}^{\tau_{k}}e^{-W}dx\\
 & =\sum_{n=M_k}^{N_k}\int_{B_R \times B_R}\int_{0}^{1}\langle\nabla\zeta_{g}(x+s(y-x)),y-x\rangle\hspace{1mm}ds\hspace{1mm}d\gamma_{n,g}^{\tau_{k}}- \tau_{k}\sum_{g^{\prime}}\int_{B_R}\zeta_{g}h_{n,gg^{\prime}}^{\tau_{k}}e^{-W}\hspace{1mm}dx\\
 & \leq C_6\sum_{n=M_k}^{N_k}\lVert\zeta_g \rVert_{C^1(B_R)} \bigg(\int_{\mathbb{R}^{d}\times\mathbb{R}^{d}}|y-x|^{2}\hspace{1mm}d\gamma_{n,g}^{\tau_{k}}\bigg)^{\frac{1}{2}} +C_{5}\tau_{k}||\zeta_g||_{W^{2,p}(B_{R})}\\
 & \leq C_{7}||\zeta||_{W^{2,p}(B_{R})}\sum_{n=M_k}^{N_k}\bigg[\bigg(\int_{\mathbb{R}^{d}\times\mathbb{R}^{d}}|y-x|^{2}\hspace{1mm}d\gamma_{n,g}^{\tau_{k}}\bigg)^{\frac{1}{2}}+\tau_{k}\bigg].
\end{align*}
In the above the constant $C_5 $ depends only on $\lambda, \Lambda$ and $W$, $C_6$ depends only on $R$, and $C_7:= C_5+C_6$. We have used the fact that $W^{2,p}(B_{R})$ embeds continuously into $C^{1}(U_{R})$, and we have also used the bounds on the exchange function $h_{n}^{\tau_k}$ from \eqref{E-LA1} together with the lower and upper bounds for the density $f^{\tau_k}_{n,g}$. Consequently,
\begin{align}\begin{aligned}\label{equicont}||T_{h}f_g^{\tau_{k}}(t) & -f_g^{\tau_{k}}(t)||_{W^{-2,p}(B_R)}\\
 & =\sup_{||\zeta_g||_{W^{2,p}(B_R)}=1}\int_{B_R}\zeta_g\big(T_{h}f_{g}^{\tau_{k}}(t,y)-f_{g}^{\tau_{k}}(t,y)\big)\hspace{1mm}dy\\
 & \leq C_{7}\bigg(\tau_{k}(N_k-M_k)+\big(\tau_{k}(N_k-M_k)\big)^{\frac{1}{2}}\bigg(\sum_{n=M_k}^{N_k}\bigg[\bigg(\int_{\mathbb{R}^{d}\times\mathbb{R}^{d}}\frac{|y-x|^{2}}{\tau_k}\hspace{1mm}d\gamma_{n,g}^{\tau_{k}}\bigg)\bigg)^{\frac{1}{2}}\\
 & \leq C_{7}\bigg(h+\sqrt{h}\bigg[\sum_{n=M_k}^{N_k}\mathcal{E}(f_{n}^{\tau_{k}})-\mathcal{E}(f_{n+1}^{\tau_{k}})\bigg]^{\frac{1}{2}}\bigg)\\
 & \leq C_{7}\bigg(h+\sqrt{h}\bigg[\mathcal{E}(f_{M_k}^{\tau_{k}})-\mathcal{E}(f_{N_k+1}^{\tau_{k}})\bigg]^{1/2}\bigg)
 \\& \leq C_{8}\bigg(h+\sqrt{h}\bigg[ \EE(f_0)\bigg]^{1/2}\bigg)
\end{aligned}
\end{align}
Here, we used Jensen's inequality, and the definition of $f_{n,g}^{\tau_k}$. This shows \[||T_{h}f^{\tau_{k}}_g-f_g^{\tau_{k}}||_{L^{^2}(0,t_F-h;W^{-2,p}(B_{R}))}\rightarrow0, \quad\text{ as } h\rightarrow0,\] 
 uniformly in $k$.

From \textbf{2a)} and \textbf{2b)} it now follows that for every $g \in \G$, the sequence $\{ f_{g}^{\tau_k}\}_{k \in \mathbb{N}}$ is precompact in $L^2(0,t_F;L^2(B_R))$ (Theorem 5 in \cite{Simon}). In particular, there exist a subsequence of $\{\tau_k \}_{k}$ (which we do not relabel) and an element $f_g \in L^2(0,t_F; L^2(B_R))$ such that $f_g^{\tau_k} \rightarrow f_g$ as $k \rightarrow \infty$ in $L^2(0,t_F; L^2(B_R))$. On the other hand, from \eqref{eq:fisher0} and \eqref{eq:fisher} it follows that for almost every $t \in [0,t_F]$ the sequence $\{ f^{\tau^k}_g(t) \}_{k}$ is bounded in $W^{1,2}(B_R)$ and thus precompact in $L^2(B_R)$ and in $W^{-2,p}(B_R)$. We can then use this fact and \eqref{equicont} to conclude from Arzela-Ascoli theorem that $\{ f^{\tau_k}_g \}_{k}$ converges in $C(0,t_F; W^{-2,p}(B_R))$ (in fact in $C^{1/2-\veps}$ for any $\veps$) to $f_g$. Moreover, a standard diagonal argument sending $R \rightarrow \infty$ along a sequence, allows us to assume without the loss of generality, that for every $g \in \G$, $f^{\tau_k}_g\rightarrow f_g$ in  $L^2(0,t_F; L^2_{loc}(\R^d))$, as well as $f^{\tau_k}_g\rightarrow f_g$ in $C(0,t_F; W^{-2,p}_{loc}(\R^d))$, as $k \rightarrow \infty$.

\textbf{3. Properties of $t \in [0,t_F) \mapsto f(t)$.}
We claim that for every $t\in [0,t_F)$ we have 
\[  \lambda e^{-V_g} \leq f_{g}(t) \leq \Lambda e^{-V_g}. \] 
Indeed, notice that from \eqref{eq:fisher0} it follows that for every $t\in[0,t_F)$, the sequence $\{ f^{\tau_k}_g(t) \}_{k \in \mathbb{N}}$ is bounded in $L^2(B_R)$ (for every $R$) and thus it must have a weakly converging subsequence in $L^2(B_R)$. Due to the fact that $f^{\tau_k}_g\rightarrow f_g$ in $C(0,t_F; W^{-2,p}_{loc}(\R^d))$, said subsequence must converge weakly to $f_g(t)$ in $L^2(B_R)$. Since each of the $ f^{\tau_k}_g(t)$ satisfies the desired lower and upper bounds in $B_R$, it follows that $f_g(t)$ satisfies the same bounds in $B_R$. Since $R$ was arbitrary we conclude that $f_g(t)$ satisfies the desired bounds in the whole $\R^d$. 

Now we claim that for every $t \in [0,t_F)$ 
\[ \sum_{g}\int_{\R^d} f_g(t,x) dx =1 .\]
Indeed, this is a direct consequence of the lower and upper bounds obtained above and the fact that for every $t \in [0,t_F)$ $f^{\tau_k}_{g}(t)$ converges in $W^{-2,p}_{loc}(\R^d)$ towards $f_g(t)$. In particular, we conclude that the curve $t\in [0,t_F) \mapsto f(t, \cdot, \cdot)$ is indeed a curve of probability measures on $\R^d \times \G$. Moreover, the fact that $f_g \in C(0,t_F; W^{-2,p}_{loc}(\R^d))$ and the upper and lower bounds on the densities $f_g(t)$ imply that the curve $t\in [0,t_F) \mapsto f(t)$ (seen as a curve of probability measures) is weakly continuous (here interpreted as weak convergence of probability measures).

It remains to show that the curve is a weak solution to \eqref{reacd_{E}}.

\textbf{4. Weak solution of \eqref{reacd_{E}}}.
Let $\zeta\in C_{c}^{\infty}(\mathbb{R}^{d}\times\mathcal{G}),$ and let $0\leq r<s <t_F$. 

From the convergence $f^{\tau_k}_g\rightarrow f_g$ in $C(0,t_F; W^{-2,p}_{loc}(\R^d))$ it follows 
\begin{equation}
\int_{\R^d}\zeta_{g}f_{g}^{\tau_{k}}(s,x)\hspace{1mm}dx-\int\zeta_{g}f_{g}^{\tau_{k}}(r,x)\hspace{1mm}dx\rightarrow\int\zeta_{g}f_{g}(s,x)\hspace{1mm}dx-\int\zeta_{g}f_{g}(r,x)\hspace{1mm}dx.\
\label{eqn:AuxFinal-1} 
\end{equation}

Now, using the fact that 
$f^{\tau_{k}}_g(t)\rightarrow f_g(t)$ in $L^{2}_{loc}(\R^d)$ for almost every $t \in [0, t_F)$, and using the upper and lower bounds for $f_{g}^{\tau_k}(t)$ and $f_g(t)$ we conclude that
\begin{align}\begin{aligned}\int_{\mathbb{R}^{d}}\sum_{g^{\prime}\in\mathcal{G}}(\zeta_{g}-\zeta_{g^{\prime}})(\log f_{g}^{\tau_{k}}(t,x) + V_g & -[\log f_{g^{\prime}}^{\tau_{k}}(t,x)+V_{g^{\prime}}]e^{-W}\hspace{1mm}dx\\
& \rightarrow\int_{\mathbb{R}^{d}}\sum_{g^{\prime}\in\mathcal{G}}(\zeta_{g}-\zeta_{g^{\prime}})(\log f_{g}(t,x)+V_{g}-[\log f_{g^{\prime}}(t,x)+V_{g^{\prime}}])e^{-W}\hspace{1mm}dx,
\end{aligned}
\label{eqn:AuxFinal0}
\end{align}
for almost every $t\in [0,t_F)$, and
\begin{equation}
\int_{\R^d}\big[\Delta_x\zeta_g-\langle\nabla_x\zeta_g,\nabla_x V_g\rangle\big]f^{\tau_{k}}_g(t,x)\hspace{1mm}dx\rightarrow\int_{\R^d}\big[\Delta_x\zeta_g-\langle\nabla_x\zeta_g,\nabla_x V_g\rangle\big]f_g(t,x)\hspace{1mm}dx,
\label{eqn:AuxFinal1}
\end{equation}
for almost every $t \in [0,t_F)$. 


Now, from the upper and lower bounds on $f_g^{\tau_k}$, it follows that for every $t \in [0, t_F)$
\[\int|\sum_{g^{\prime}\in\mathcal{G}}(\zeta_{g}-\zeta_{g'})(\log f_{g}^{\tau_{k}}(t,x)+V_{g}-[\log f_{g'}^{\tau_{k}}(t,x)+V_{g'}])|e^{-W}\hspace{1mm}dx\leq C_{10}||\zeta||_{L^{\infty}(\mathbb{R}^{d})},\ 
\]
for a constant $C_{10}$ that only depends on $\lambda, \Lambda$, $|\G|$, $W$, and also 
\begin{align*}\begin{aligned}\int_{\R^d}|\big[\Delta_x\zeta_{g}-\langle\nabla_x\zeta_{g},\nabla_x V_{g}\rangle\big]|f_{g}^{\tau_{k}}(t,x)\hspace{1mm}dx & \leq  ||\Delta_x\zeta_g||_{L^{\infty}(\mathbb{R}^{d})} +C_{11}||\nabla_x\zeta_g||_{L^{\infty}(\mathbb{R}^{d}\times\mathcal{G})} ([\nabla_x V ]_{e^{-V}})^{1/2},
\end{aligned}
\end{align*}
for a constant $C_{11}$ that depends only on $\lambda, \Lambda, \lambda', \Lambda'$. We recall that $[\nabla_x V ]_{e^{-V}}$ is the quantity defined in Corollary \ref{Sob}.

Using the above two inequalities and \eqref{eqn:AuxFinal0}, \eqref{eqn:AuxFinal1}, we can invoke the dominated convergence theorem twice, and then combine with \eqref{eqn:AuxFinal-1} in order to conclude that we can pass to the limit in \eqref{eqn:auxJKOApprox}. From this it follows that $t\mapsto f(t)$ is a weak solution to \eqref{reacd_{E}}.

\end{proof}}

\section{Summary and discussion on applications}
\label{sec:OpenProblems}

In this paper we introduce two types of optimal transport problems in the semi-discrete setting and then study gradient flows of relative entropy functionals with respect to these semi-discrete transport costs. The first problem uses a dynamic formulation a la Benamou-Brenier, and a formal Riemannian structure can be associated to it. The Riemannian formalism is used to motivate systems of equations representing a gradient descent scheme for the minimization of a relative entropy functional; the Riemannian formalism can also be used to motivate accelerated methods for optimization. With the second optimal transport problem (the static one) we seek to more rigorously introduce the notion of gradient flow of the relative entropy functional by considering a minimizing movement scheme of the relative entropy with respect to this cost. Theorem \ref{main-result} establishes an equivalence between the gradient flow equation formally derived through the Riemannian formalism of the first transport cost and the rigorous definition of gradient flow using the minimizing movement scheme with respect to the second transport cost.

There are several theoretical research directions that emanate from our work. First, we believe that it is worth establishing a closer relationship between the two semi-discrete optimal transport problems introduced in the paper (the static and dynamic formulations). Secondly, it is worth emphasizing that our main result on the convergence of the minimizing movement scheme from section \ref{sec:main-results} towards the gradient flow heuristically motivated using the Riemannian formalism was only proved for mobilities that are independent of the mass exchanged among nodes in the graph. We believe that it is worth obtaining a more general result that justifies the connection between these two gradient flows even further.

	In the remainder of the paper we discuss some thoughts on the main application motivating this work.

	\subsection{From semi-discrete optimal transport to neural architecture search}

	In machine learning, a neural network is a graph $g$ (the architecture) whose nodes are arranged into layers with edges connecting nodes at different layers. A collection of free parameters (or weights) $x$ is associated with the nodes and edges in the graph. The network architecture $g$, together with the numerical values of its associated parameters $x$, determine a series of transformations that, when composed, define a mapping of input vectors (input data) into output vectors (labels). Training a given neural network $g$ essentially means tuning the corresponding parameters $x$ so as to achieve a small mismatch between predicted and observed outputs associated with given training inputs. In other words, the training of a neural network $g$ is the optimization of an objective function (a loss function) over the free parameters $x$.

	In \textit{neural architecture search} the goal is to find an architecture $g$ that, once trained, gives the best performance possible when predicting data outputs. From a simplistic perspective, this problem can be stated as solving: \begin{equation} \label{eq:semidiscreteopti}
	\minn_{(x,g)\in\R^{d}\times\mathcal{G}}V(x,g).
	\end{equation}
	where $V$ is thought of as a loss function that typically depends on observed data as well as on additional regularization terms. The variable $x$ (the parameters of a network) can be interpreted as a $\R^d$-valued vector (for $d$ large enough but fixed for simplicity), whereas $g$ can be interpreted as an element in a finite family of architectures $\mathcal{G}$ (which in principle may be quite large). In short, in neural architecture search the optimization is over both the architecture space $\G$ and over the parameters. The tensorized representation of the problem is certainly an oversimplification because, in reality, the parameters $x$ associated to an architecture $g$ do not have an obvious correspondence with the parameters of a different architecture $g'$ (and in fact their dimensions do not even have to match). We will not elaborate much further on this simplification and here we just limit ourselves to saying that while unreasonable when $\G$ is interpreted as the whole space of architectures, the tensorized representation of problem \ref{eq:semidiscreteopti} is useful when one restricts to a local graph of architectures where one has access to morphisms or correspondences between the parameters of different architectures (just like restricting the optimization of a function defined on a curved manifold to a local chart).

There is an enormous literature on neural architecture search methodologies and some of its applications  (see
	\cite{search_lit} for a brief overview on the subject), but essentially most methods found in the literature fall into two main groups. The first group builds on ideas from reinforcement learning as in \cite{ReinfLearning} which uses optimization tools like those described in \cite{REINFORCE}. The second major group is based on evolutionary algorithms \cite{NeuroEvolution, RegularizedEvolution}, where one specifies rules for merging and mutation of different architectures in search of ``stronger" architectures. A third type of methodology is the morphism-based hill-climbing strategy from \cite{Hillclimbing}. There, the authors propose an iterative scheme that alternates between training for a \textit{fixed time} a group of architectures that are determined by a morphism family and then moving in the space of architectures according to the relative performance improvement in such training time. In all the methodologies listed above, the main objective is to avoid the full training of multiple neural networks (something that would be computationally forbidding), either by building surrogate objective functions that are easier to evaluate, by training networks for a fixed amount of time, or by learning to predict which architectures are more likely to give better results. Many techniques in the literature are based on the above strategies. To name a few: \cite{ParameterSharing,ProgressiveNeural,TransferableArchitectures,ProgressiveNeural,Hyperparameter1, search_lit}.

 In this sprawling landscape of methods and techniques for neural architecture search, mathematicians can bring to the table principled ideas and structures for the development of new algorithms or the improvement of existing ones. Take for example the hill-climbing algorithm from \cite{Hillclimbing} where it is key to tune the amount of time that neural networks have to be trained for. It is intuitively clear that setting a fixed time for training is not ideal as in that way one forces all models to be treated the same regardless of their sizes or architectures. In our paper \cite{practical} we elaborate on this issue and propose a method where the training time of architectures is dynamically chosen as dictated by an evolving particle system that is inspired by the gradient flow perspective developed in this paper. All along, our intention was to give meaning to the notion of gradient descent for the optimization of an objective in the space $\R^d \times \G$, i.e. how to propose a gradient based method for semi-discrete optimization (with neural architecture search as main application in mind). As discussed in section \ref{sec:OTEuclidean}, in the Euclidean setting there is a well known connection between gradient flows in the space of measures and dynamics in the base space. In the semi-discrete setting, this connection is sought through particle methods. Particle methods are one way to project to the space $\R^d \times \G$ the dynamics that were lifted to the space of probability measures $\mathcal{P}(\R^d \times \G)$ to make sense of a gradient based scheme. In \cite{practical} all the nuances that have to be resolved to make this conceptual idea feasible for neural architecture search are discussed.

We hope that the theoretical, methodological and implementation questions briefly described here are able to motivate further research in the mathematics and computer science communities.

\red

\bibliography{ref}
\bibliographystyle{abbrv}	

\end{document}